    \newcolumntype{C}[1]{>{\centering\let\newline\\\arraybackslash\hspace{0pt}}m{#1}}
\title{ }
\author{ }
\date{}
\begin{document}
\theoremstyle{definition} 
    \newtheorem{fact}{Fact}
    \newtheorem{pro}{Question}
    \newtheorem{thm}{Theorem}[section]
    \newtheorem*{mainthm}{Main Theorem}
    \newtheorem{thmIntro}{Theorem}
    \renewcommand{\thethmIntro}{\Alph{thmIntro}}
    \newtheorem{coroIntro}[thmIntro]{Corollary}
    \newtheorem{lem}[thm]{Lemma}
    \newtheorem{coro}[thm]{Corollary}
    \newtheorem{ex}[thm]{Example}
    \newtheorem{defi}[thm]{Definition}
    \newtheorem{prop}[thm]{Proposition}
    \newtheorem{property}[thm]{Property}
    \newtheorem{rmk}[thm]{Remark}
    \newtheorem*{thme}{THEOREM}

    \newenvironment{thmrewrite}[1]
    {\renewcommand{\thethm}{\ref{#1} (c)$'$}
     \addtocounter{thm}{-1}
     \begin{thm}}
    {\end{thm}}
    \newenvironment{thmrewrite2}[1]
    {\renewcommand{\thethm}{\ref{#1}$'$}
     \addtocounter{thm}{-1}
     \begin{thm}}
    {\end{thm}}

    \newcommand{\vphi}[0]{\varphi}
    \newcommand{\upst}[0]{^{*}}
    \newcommand{\dwst}[0]{_{*}}
    \newcommand{\dd}[1]{\textrm{d}#1}
    \newcommand{\metr}{\textrm{g}}
    \newcommand{\Metr}{\textrm{G}}
    \newcommand{\rr}{\textrm{R}}
\newcommand{\rmnum}[1]{\romannumeral #1}
\newcommand{\mR}{\mathbb{R}}
\newcommand{\mC}{\mathbb{C}}
\newcommand{\mH}{\mathbb{H}}
\newcommand{\mD}{\mathbb{D}}
\newcommand{\mS}{\mathbb{S}}
\newcommand{\mZ}{\mathbb{Z}}
\newcommand{\z}{\zeta}
\newcommand{\tphi}{\tilde{\phi}}
\newcommand{\cp}{\mathbb{CP}^1}
    \newcommand{\sm}{\mathcal{M}}

    \newcommand{\warp}{\times_{f}}
    \newcommand{\wrp}[1]{\times_{#1}}
    \newcommand{\warppro}{M\warp N}
    \newcommand{\DF}{F^{'}}
    \newcommand{\mHi}[1]{\mH^n_{(#1)}}
    \newcommand{\gi}[1]{\text{g}_{(#1)}}
    \newcommand{\uppri}{^{'}}
    \newcommand{\upcir}{^{\circ}}
    \newcommand{\inpro}[2]{\langle#1,#2\rangle}
    \newcommand{\diff}{\text{d}}
    \newcommand{\pp}[1]{\frac{\partial}{\partial #1}}
    \newcommand{\tg}[2]{\tilde{\metr}\left(#1,#2\right)}
    \newcommand{\tn}[2]{\tilde{\nabla}_{#1}#2}
    \newcommand{\tmf}{\tilde{\mathcal{F}}}
    \newcommand{\codim}{\operatorname{codim}}

    \newcommand{\F}{\mathcal{F}}
    \newcommand{\nsb}[1]{\text{NS}(#1)}
    \newcommand{\myeqref}[1]{\text{(}\ref{#1}\text{)}}

    \newcommand{\inv}[1]{\operatorname{Inv}_0(#1)}
    \newcommand{\invre}[1]{\operatorname{Inv}^{-}_0(#1)}

    \newcommand{\homeo}[1]{\operatorname{Heomo}\left(#1\right)}
    \newcommand{\homeoori}[1]{\operatorname{Heomo}^{+}\left(#1\right)}
    \newcommand{\isotoid}[1]{\operatorname{Heomo}_0\left(#1\right)}
    \newcommand{\MCG}[1]{\operatorname{Mod}\left(#1\right)}
    \newcommand{\eMCG}[1]{\operatorname{Mod}^{\pm}\left(#1\right)}

    \newtheorem{thmintro}{Theorem}
    \renewcommand{\thethmintro}{\Alph{thmintro}}

    \newtheorem{corointro}[thmintro]{Corollary}

\title{On the Classification of Isoparametric Hypersurfaces with Constant Principal Curvatures in Compact 3-Manifolds}

    \author{Minghao Li, Ling Yang}

    \renewcommand{\thefootnote}{}
    \footnotetext{\textit{Email addresses}: mhli19@fudan.edu.cn (Minghao Li), yanglingfd@fudan.edu.cn (Ling Yang).}

	\maketitle

	\renewcommand{\proofname}{\bf Proof.}

\begin{abstract}
    Establishing detailed relationships between transnormal systems of different types and their behaviors under covering maps, this paper presents a classification of transnormal systems on compact 3-manifolds in the sense of equivalence. For CPC transnormal systems, we show that the ambient manifolds must be locally isometric to one of six standard geometries up to equivalence. We also find some equivalence classes containing no CPC transnormal system, highlighting a critical distinction between isoparametric foliations and CPC transnormal systems, which has not been previously addressed in the literature.
\end{abstract}
\tableofcontents
\section{Introduction}
    A smooth function $f$ on a Riemannian manifold $(M,g)$ is called {\it transnormal} if there is a smooth function $b$ such that $|\nabla f|^2=b(f)$. If, in addition, $\Delta f=a(f)$ for another smooth function $a$, then $f$ is called {\it isoparametric}. Each regular level hypersurface of an isoparametric function is called an {\it isoparametric hypersurface}. The classification of isoparametric hypersurfaces has been extensively studied in various spaces. In Euclidean and hyperbolic spaces, the classification was completed by Cartan in \cite{Cartan1,Cartan2}. In the standard spheres, the classification, which is significantly more challenging, was initiated by Cartan and eventually completed through contributions from many works, such as \cite{ozeki1975some,ferus1981cliffordalgebren,takagi1972principal,munzner1980isoparametric,munzner1981isoparametric,abresch1983isoparametric,dorfmeister1985isoparametric,tang1991fourdistinct,fang1999topology,cecil2007isoparametric,immervoll2008classification,chi2011isoparametric,chi2013isoparametric,miyaoka2013isoparametric,miyaoka2016errata,chi2020isoparametric}, with further surveys available in \cite{chiSurvey}. Further classifications have been explored in spaces such as complex projective spaces $\mC P^n$ \cite{wang1982isoparametric,kimura1986real,park1989isoparametric,dominguez2016isoparametric}, complex hyperbolic spaces $\mC\mH^n$ \cite{berndt1989real,berndt2006real,berndt2007real,diaz2012inhomogeneous,diaz2017isoparametric}, compact symmetric spaces \cite{murphy2012curvature}, and the product space $\mS^2 \times \mS^2$ \cite{urbano2019hypersurfaces}.
    \par    
    For general Riemannian manifolds, thanks to the tubular regularity of transnormal functions in \cite{wang1987isoparametric}, an arbitrary transnormal function induces an embedded transnormal system of codimension one. A {\it transnormal system} $\F$ on a complete Riemannian manifold $M$ is a partition of $M$ into connected submanifolds, called {\it foils}, so that any geodesic in $M$ intersects these foils orthogonally at all or none of its points (see \cite{bolton1973transnormal}). $\F$ is {\it embedded} if all foils are embedded; the {\it codimension} of $\F$ is defined as the codimension of the foil with the greatest dimension. Conversely, it is shown that any embedded transnormal system of codimension one can be induced by a transnormal function, and in the case of compact manifolds, it can further be realized as an isoparametric function. Precisely, the following results hold:
    \begin{thm}[\cite{miyaoka2013transnormal,li2024}]
        Suppose $\F$ is an embedded transnormal system of codimension one on a connected complete manifold $M$, then $M$ admits a transnormal function $f$ such that the transnormal system $\F_f$ induced by $f$ coincides with $\F$. 
    \end{thm}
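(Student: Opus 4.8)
The plan is to build $f$ out of the distance to a fixed foil and then compose with a single carefully chosen profile function, so that the result is globally smooth and transnormal.

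First I would record the local picture. Fix a regular foil $L_0$ of top dimension $\dim M - 1$. By the tubular regularity of \cite{wang1987isoparametric}, $L_0$ has a tubular neighbourhood in which the nearby foils are exactly the equidistant hypersurfaces of $L_0$, and, by the transnormal hypothesis, the normal geodesics issuing from $L_0$ meet every foil they cross orthogonally. I would let $\rho$ denote the signed arclength along these normal geodesics; transnormality makes $\rho$ independent of the chosen normal geodesic, and on the regular part $\nabla\rho$ is the unit normal of the foils, so $|\nabla\rho|^2\equiv 1$. Thus $\rho$ is a local transnormal function whose level hypersurfaces are precisely the regular foils.

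Second, I would analyse the global structure via completeness. A unit-speed normal geodesic $\gamma$ is defined on all of $\mR$ and, remaining orthogonal to every foil it meets, descends to the one-dimensional leaf space $B=M/\F$. Using tubular regularity at each foil crossed, together with the classification of connected $1$-manifolds, $B$ is homeomorphic to $\mR$, to a half-line $[0,\infty)$, to a compact interval $[0,d]$, or to a circle $S^1$. The crucial consequence is that $B$ has at most two boundary points, which correspond exactly to the singular foils (focal varieties of codimension $\ge 2$); equidistance of foils then shows that the distance between the two focal varieties, or the length of a periodic normal geodesic, is a constant independent of $\gamma$. This rules out the problematic scenario of three or more unequally spaced focal varieties.

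Third, I would produce $f$ case by case and verify $|\nabla f|^2=b(f)$ directly. If $B\cong\mR$ there are no singular foils and $f=\rho$ works with $b\equiv 1$. If $B\cong[0,\infty)$, let $r$ be the distance to the unique focal variety $F$; then $f=r^2$ is smooth even across $F$ (squared distance to a submanifold is smooth) and satisfies $|\nabla f|^2=4r^2|\nabla r|^2=4f$. If $B\cong[0,d]$, with focal varieties at distances $0$ and $d$, I would take
\[
    f=\cos\!\left(\frac{\pi r}{d}\right),
\]
which is smooth across both focal varieties because near each it is an even smooth function of the distance, and which satisfies $|\nabla f|^2=\frac{\pi^2}{d^2}\sin^2(\pi r/d)=\frac{\pi^2}{d^2}(1-f^2)$. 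Finally, if $B\cong S^1$ of length $L$, the choice $f=\cos(2\pi\rho/L)$ descends to a single-valued smooth function on $M$ (the evenness of $\cos$ absorbing the sign ambiguity of $\rho$ should the normal bundle fail to be coorientable), and $|\nabla f|^2=\frac{4\pi^2}{L^2}(1-f^2)$. In every case the connected components of the level sets of $f$ are exactly the foils of $\F$, so $\F_f=\F$.

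The main obstacle is the interface between the local and global pictures in the third step, namely the smoothness of $f$ across the singular foils. This rests on showing that near a focal variety the regular foils are genuinely its geodesic distance tubes, so that the unsigned distance $r$ is controlled and $r^2$ (hence $\cos(\pi r/d)$) is truly smooth there; this is precisely where the tubular regularity of \cite{wang1987isoparametric} and the equidistance of foils must be invoked carefully. The remaining verifications — that $\rho$ and $r$ are globally well defined, that the level-set components recover the foils, and that each displayed $b$ is smooth as a function of $f$ — are then routine.
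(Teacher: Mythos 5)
This theorem is quoted by the paper from \cite{miyaoka2013transnormal,li2024} without proof, so I am comparing your proposal against the standard construction in those references. Your overall strategy --- compose the (signed or unsigned) distance to a distinguished foil with a profile function chosen case by case according to the homeomorphism type of the leaf space $M/\F$ --- is exactly the right one and is essentially the argument used in the literature. However, there is one concrete error in your second step: you assert that the boundary points of $B=M/\F$ ``correspond exactly to the singular foils (focal varieties of codimension $\ge 2$).'' This is false. A boundary point of the leaf space can also correspond to a \emph{single-sided regular foil}, i.e.\ an embedded one-sided hypersurface whose normal sphere bundle is connected (an SR-foil in the paper's terminology); see Table \ref{table: seven types}, where the twisted-cylindrical, real-projective and Klein-bottled types all have such end foils. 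Your smoothness argument for $f=r^2$ and $f=\cos(\pi r/d)$ across the end foil invokes ``squared distance to a submanifold is smooth,'' which you justify only for focal varieties of codimension $\ge 2$. For an SR-foil $L$ the claim is still true, but for a different reason: on the orientation double cover of the normal bundle the distance becomes a smooth signed defining function $s$ with $r=|s|$, so $r^2=s^2$ descends smoothly. Without treating this case your case analysis is incomplete, and in the interval case you must also allow the two ends to be of different kinds (one S-foil and one SR-foil, the real-projective type).

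The second issue is the one you yourself flag as ``the main obstacle'': that the unsigned distance $r$ to an end foil is smooth on all of $M$ minus the other end foil, equivalently that the normal exponential map of the end foil is a diffeomorphism up to the injectivity radius and that every regular foil is exactly a distance tube. This is the technical heart of the theorem; it is supplied by the structure results recorded in the paper as Lemmas \ref{lemma: property of inj radius in special cases} and \ref{lemma: SR-foil and S-foil at most 2} (equidistance of foils, $N_{\mathrm C}\le 2$, and the disk-bundle decomposition), and a complete proof must actually derive these from \cite{wang1987isoparametric} rather than gesture at them. With those two points repaired --- the SR-foil case added to the boundary analysis, and the global tube structure established rather than assumed --- your construction does prove the theorem; the profile functions and the verifications $|\nabla f|^2=b(f)$ are all correct as written, including the evenness trick in the circle case.
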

    \par    
    \begin{thm}[\cite{li2024}]\label{thm: intro: four equivlent conditions}
        For each compact manifold $M$, the following statements are equivalent:
        \begin{enumerate}[(a)]
            \item $M$ admits a linear double disk bundle decomposition.
            \item $M$ can be endowed with a Riemannian metric so that it admits an embedded transnormal system of codimension one.
            \item $M$ can be endowed with a Riemannian metric so that it admits a transnormal function.
            \item $M$ can be endowed with a Riemannian metric so that it admits an isoparametric function. 
        \end{enumerate}
    \end{thm}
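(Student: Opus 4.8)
The plan is to establish the four statements as equivalent through the cycle $(a)\Rightarrow(d)\Rightarrow(c)\Rightarrow(b)\Rightarrow(a)$, in which two of the links are essentially formal and the content lies in the two remaining geometric links. The implication $(d)\Rightarrow(c)$ is immediate: an isoparametric function is transnormal by definition, so the very same Riemannian metric witnesses $(c)$. The implication $(c)\Rightarrow(b)$ is precisely the tubular-regularity phenomenon recalled above: on $(M,g)$ a transnormal function $f$ has regular level sets that, together with the two focal foils $f^{-1}(\min f)$ and $f^{-1}(\max f)$, assemble into an embedded transnormal system $\F_f$ of codimension one on the same $(M,g)$. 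Thus everything reduces to the two geometric links $(b)\Rightarrow(a)$ and $(a)\Rightarrow(d)$.

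For $(b)\Rightarrow(a)$ I would appeal to the structure theory of codimension-one transnormal systems on a compact manifold. The orbit space $M/\F$ is a connected compact one-dimensional space, hence either a circle or a closed interval, with the (at most two) singular foils $B_0,B_1$ corresponding to its endpoints; when $M/\F$ is a circle there is no singular foil and one simply cuts along two regular leaves. Because regular foils are parallel hypersurfaces lying at constant distance from the singular ones, the normal exponential map identifies a tubular neighborhood of each $B_i$ with the disk bundle $\mD(\nu B_i)$ of its normal bundle, the sphere bundle $S(\nu B_i)$ being a regular leaf and the linear structure being the one supplied by $\exp$ on the normal bundle. Cutting $M$ along a common regular leaf then exhibits $M=\mD(\nu B_0)\cup_\phi \mD(\nu B_1)$ as a linear double disk bundle, which is $(a)$.

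The core of the argument, and the step I expect to be the main obstacle, is the constructive direction $(a)\Rightarrow(d)$. Beginning from $M=\mD(E_0)\cup_\phi \mD(E_1)$ with vector bundles $E_i\to B_i$ and a bundle diffeomorphism $\phi$ of the boundary sphere bundles, I would construct the metric and the isoparametric function simultaneously. On each $\mD(E_i)$ I would fix a fiber metric and a connection and put a warped connection metric of the schematic shape
\begin{equation*}
    g_i = dr^2 + w_i(r)^2\,\pi^*\bar g_i + h_i(r)^2\,g_{\mathrm{sph}},
\end{equation*}
where $r$ denotes the fiber-radius, $\bar g_i$ a metric on $B_i$, and $g_{\mathrm{sph}}$ the round metric in the fiber-sphere directions. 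The virtue of this ansatz is that $|\nabla r|\equiv 1$ and that the mean curvature of every distance tube $\{r=c\}$, namely $\Delta r = (\operatorname{rk}E_i-1)\,h_i'/h_i + (\dim B_i)\,w_i'/w_i$, is a function of $r$ alone. Taking $f$ to be a fixed monotone profile of the signed distance across the two pieces then makes both $|\nabla f|^2=(f')^2$ and $\Delta f = f'' + f'\,\Delta r$ functions of $f$ only, which is exactly the transnormal and isoparametric requirement.

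The genuine difficulty is the $C^\infty$ matching along the common regular leaf $N$, where $S(E_0)$ is identified with $S(E_1)$ via $\phi$: the two metrics $g_0,g_1$, their induced metrics and second fundamental forms on $N$, and all normal derivatives of $f$ must agree there, while the warping functions $h_i$ must carry the correct jets at $r=0$ (the appropriate vanishing and parity conditions) in order to close up smoothly on the zero sections $B_i$. I would handle this by first fixing a single metric and collar on $N$, then prescribing the boundary jets of $w_i,h_i$ at both $r=0$ and the gluing radius, and finally choosing the profile of $f$ (for instance a rescaled cosine) so that $f,f',f''$ are matched at the interface. The linearity of the disk bundles is used essentially here, since the rotationally symmetric fiber structure is what forces the tube mean curvatures to depend on $r$ alone and what makes the jet-matching solvable; once it is arranged, the two computations above give transnormality and the isoparametric identity globally, closing the cycle.
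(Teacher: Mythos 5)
The theorem you are proving is quoted in this paper from \cite{li2024} without an internal proof, so the comparison below is against the construction in the cited literature rather than against an argument printed here. Your cycle $(a)\Rightarrow(d)\Rightarrow(c)\Rightarrow(b)\Rightarrow(a)$ is the right architecture, and three of the four links are sound: $(d)\Rightarrow(c)$ is definitional; $(c)\Rightarrow(b)$ is Wang's tubular regularity; and $(b)\Rightarrow(a)$ is exactly Lemma \ref{lemma: SR-foil and S-foil at most 2} together with the normal exponential map, with the small caveat that an endpoint of $M/\F$ may correspond to an SR-foil (so the piece is a twisted linear \emph{interval} bundle rather than a higher-rank disk bundle), and that in the toric case one cuts along two regular leaves to get two trivial interval bundles --- your sketch accommodates both.

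The gap is in $(a)\Rightarrow(d)$, and it sits precisely at the ``$C^\infty$ matching'' you defer to the end: that step is not a jet-bookkeeping exercise but the place where your ansatz breaks. If both pieces carry metrics of the form $dr^2+w_i(r)^2\pi^*\bar g_i+h_i(r)^2 g_{\mathrm{sph}}$ up to the gluing locus $N$, then smoothness of the glued metric forces $\phi$ to carry a connection metric for the fibration $S(E_0)\to B_0$ to a connection metric for $S(E_1)\to B_1$; but $\phi$ is an arbitrary diffeomorphism of $N$ that need not intertwine the two fibration structures, and no choice of $\bar g_i$, fiber metrics, connections, $w_i$, $h_i$ gives you any freedom to arrange this in general (your ``single metric and collar on $N$'' would have to be simultaneously adapted to two unrelated sphere-bundle structures). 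If instead you abandon the ansatz on a neck around $N$ and interpolate, transnormality survives --- $|\nabla f|^2=(f')^2$ only needs the leaves to be the distance tubes --- but $\Delta f=f''+f'\Delta r$ ceases to be a function of $r$ on the intermediate leaves, because their mean curvature is no longer leafwise constant, so the isoparametric identity fails exactly where you need it. The missing idea, which is how the construction is actually closed in Qian--Tang/Ge--Tang and in \cite{li2024}, is to decouple the two conditions: first build any foliated metric $dt^2+g_t$ (warped-connection only near the zero sections, arbitrary in the middle) realizing a transnormal $f=f(t)$, and then modify the metric \emph{only in the leaf directions}, replacing $g_t$ by $\lambda(t,x)^2g_t$ with $\lambda$ chosen so that $\partial_t\log\sqrt{\det g_t}$ becomes a function of $t$ alone. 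This normalizes the leafwise volume growth, hence makes $\Delta f$ a function of $f$, without touching $|\nabla f|^2$; and one can take $\lambda\equiv1$ near $B_0,B_1$, where the metric is already rotationally symmetric, so smoothness at the singular leaves is preserved. Without this (or an equivalent device) your step $(a)\Rightarrow(d)$ does not close.
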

    \par    
    Moreover, in the equivalence of Theorem \ref{thm: intro: four equivlent conditions}, the foils of the transnormal system correspond precisely to the base spaces of the disk bundles and to the connected components of their subbundles with hypersphere fibers, where an SR-foil or an S-foil, if it exists, is exactly a base space.
    \par    
    The next natural step is to investigate the classification of transnormal functions and manifolds admitting them. In contrast to the classification of isoparametric functions (or transnormal functions) on a fixed Riemannian manifold, the classification in general Riemannian manifolds cannot, in general, be carried out up to isometry. Instead, we develop the classification in the following equivalent sense: 
    \begin{defi}\label{def: equivalence of tran sys}
        Let $\F$ and $\F\uppri$ be transnormal systems on Riemannian manifolds $M$ and $M\uppri$, respectively. $\F$ and $\F\uppri$ are {\it equivalent} if there exists a diffeomorphism $\phi:M\to M\uppri$ such that $\phi(F)\in\F\uppri$ for any $F\in\F$.  
    \end{defi}
    \par    
    For simply connected compact Riemannian manifolds, the classification is well understood in low dimensions. In dimension four, any Riemannian manifold admitting a transnormal function is diffeomorphic to $\mS^4$, $\mC\mathbb{P}^2$, $\mS^2\times\mS^2$, or $\mC\mathbb{P}^2\#\pm \mC\mathbb{P}^2$, as established in \cite{ge2015differentiable}. The five-dimensional case was later settled in \cite{devito2023manifolds}, identifying the diffeomorphism types as $\mS^5$, the Wu manifold $\operatorname{SU}(3)/\operatorname{SO}(3)$, $\mS^3\times\mS^2$, and the unique twisted $\mS^3$-bundle over $\mS^2$. In dimension three, the problem is trivial as a consequence of the Poincar\'{e} conjecture. However, much less is known when the fundamental group is nontrivial. It is important to note that such cases cannot be resolved by lifting to the universal cover: transnormal systems and functions depend on the Riemannian metric, and the isometric covering transformation group is not invariant under diffeomorphisms. A more detailed discussion of this issue is given in Remark \ref{remark: why lift directly to universal is bad}. 
    \par   
    In this paper, we initiate the study of the classification of compact Riemannian manifolds that admit transnormal functions and systems, extending beyond the simply connected case. In particular, the classification of transnormal systems on compact 3-manifolds is carried out based on their types. More precisely, according to the base spaces of the disk bundles in Theorem \ref{thm: intro: four equivlent conditions}, they fall into the following four types:
    \begin{itemize}
        \item {\it toric type}: both base spaces have codimension one, and both disk bundles are trivial; 
        \item {\it Klein-bottled type}: both base spaces have codimension one, and both disk bundles are non-trivial; 
        \item {\it spherical type}: both base spaces have codimension greater than one; 
        \item {\it real-projective type}: one base space has codimension one, while the other has codimension greater than one.
    \end{itemize}
    \par    
    We provide a simple necessary and sufficient condition for determining the equivalence of transnormal systems of toric type and Klein-bottled type in the sense of Definition \ref{def: equivalence of tran sys}. For spherical type and real-projective type, their classification up to equivalence is given in Tables \ref{table: intro: Classification of spherical type in closed 3d} and \ref{table: intro: Classification of real-projective type in closed 3d}, respectively. 
    \begin{table}[!htbp]
        \centering \small
        \caption{Equivalence classes of transnormal systems of spherical type on compact 3-manifolds.}
        \label{table: intro: Classification of spherical type in closed 3d}
        \renewcommand\arraystretch{1.3}
        \tabcolsep=0.5cm
        \scalebox{1}{
        \begin{tabular}{C{2cm}|C{2cm}|C{1.5cm}|C{2.5cm}|C{2cm}} 
            \toprule[1pt]
            Manifold $M$  & DR-foils & S-foils & Cohomogeneity-One Action inducing $\F$ & Orientable ($M$) \\
            \cline{1-5}\cline{1-5}
            $\mS^3$ & $2$-sphere & Point & $\operatorname{SO}(3,\mR)$ & \multirow{2}{*}{Yes}\\
            \cline{1-4}
            Lens Space & Torus & \multirow{2}{*}{Circle} & $\mS^1\times\mS^1$ & \\
            \cline{1-2}\cline{4-5}
            $\mS^1\times_{\text{\tiny tw}} \mS^2$ & Klein bottle && None & No \\
            \bottomrule[1pt]
        \end{tabular}
        }
    \end{table}
    \begin{table}[!htbp]
        \centering \small
        \caption{Equivalence classes of transnormal systems of real-projective type on compact 3-manifolds. }
        \label{table: intro: Classification of real-projective type in closed 3d}
        \renewcommand\arraystretch{1.3}
        \tabcolsep=0.5cm
        \begin{tabular}{C{2cm}|C{2cm}|C{1.8cm}|C{2.3cm}|C{2cm}}
            \toprule[1pt]
            Manifold $M$ & DR-foils & SR-foil &  Cohomogeneity-One Action inducing $\F$ & Orientable ($M$) \\
            \hline
            $\mR P^3$  & 2-sphere & $\mR P^2$ &  $\operatorname{SO}(3,\mR)$ & Yes\\
            \hline
            $\mS^1\times\mR P^2$  & \multirow{3}{*}{Torus} & \multirow{2}{*}{Torus} &  \multirow{2}{*}{$\mS^1\times\mS^1$}& \multirow{2}{*}{No}\\
            \cline{1-1}
            $\mS^1\times_{\text{\tiny tw}} \mS^2$ & & & &\\
            \cline{1-1}\cline{3-5}
            Klein Spaces & & \multirow{2}{*}{Klein bottle} & \multirow{2}{*}{None} & Yes\\
            \cline{1-2}\cline{5-5}
            $\mS^1\times\mR P^2$ & Klein bottle & & & No\\
            \bottomrule[1pt]
        \end{tabular}
    \end{table}
    The classification of the former two types is not explicitly listed due to the large number of equivalence classes. Furthermore, Table \ref{table: intro: closed 3-manifold} presents a summary of the classification for all four types, organized by the topological types of the ambient manifolds. This result also extends the classification of cohomogeneity one actions on compact 3-manifolds in \cite{neumann19683}, as such actions necessarily induce a transnormal system of codimension one. 
    \begin{table}[!htbp]
        \centering \small
        \caption{Compact 3-manifolds admitting transnormal systems}
        \label{table: intro: closed 3-manifold}
        \renewcommand\arraystretch{1.3}
        \tabcolsep=0.5cm
        \scalebox{0.95}{
            \begin{tabular}{C{4.5cm}|C{1.5cm}|C{2.2cm}|C{1.3cm}|C{2.0cm}}
                \toprule[1pt]
                \multirow{2}{*}{Topology of $M$} & \multicolumn{4}{c}{Types of transnormal systems admitted} \\
                \cline{2-5}
                 & spherical & real-projective & toric & Klein-bottled \\
                \cline{1-5}
                $\mS^3$ ($=L(1,0)$) & $\surd$ & & & \\
                \cline{1-5}
                $\mS^2\times\mS^1$  & \multirow{2}{*}{$\surd$} & & \multirow{2}{*}{$\surd$} &  \\
                ($=L(0,1)=M(1,0)=\mathcal{M}_{id_{\mS^2}}$) &&&&\\
                \cline{1-5}
                $\mS^1\times_{\text{\tiny tw}}\mS^2(=\mathcal{M}_{\sigma_{\mS^2}})$ & $\surd$ & $\surd$ & $\surd$ &  \\
                \cline{1-5}
                $\mR P^3$ ($=L(2,1)$) & $\surd$ & $\surd$ & &  \\
                \cline{1-5}
                $\mR P^2\times\mS^1(=\mathcal{M}_{id_{\mR P^2}})$ &  & $\surd$ & $\surd$ &  \\
                \cline{1-5}
                $\mR P^3\#\mR P^3$ &  & \multirow{2}{*}{$\surd$} &  & \multirow{2}{*}{$\surd$}  \\
                ($= M(0,1)= \mathcal{K}_{\sigma_{\scriptscriptstyle\mS^2};\sigma_{\scriptscriptstyle\mS^2}}$) &&&&\\
                \cline{1-5}
                Other lens spaces $L(p,q)$ & $\surd$ &  &  &  \\
                \cline{1-5}
                Other Klein spaces $M(p,q)$ &  & $\surd$ & &  \\
                \cline{1-5}
                Other $\mathcal{M}_{\tilde{\tau}}$ &  &  & $\surd$ &  \\
                \cline{1-5}
                Other $\mathcal{K}_{\tilde{\sigma}_1;\tilde{\sigma}_2}$ &  &  & & $\surd$  \\
                \bottomrule[1pt]
            \end{tabular}
        }
    \end{table}
    \par    
    Several aspects in these tables deserve further explanation:
    \begin{itemize}
        \item The Klein spaces $M(p,q)$ refer to a class of compact, orientable, irreducible 3-manifolds, in which a Klein bottle can be embedded, where $p$ and $q$ are coprime. For its precise definition, see Definition \ref{def: Klein space} or \cite{kim1978some,kim1981involutions}. The space $\mathcal{M}_{\tilde{\tau}}$ is the mapping torus induced by $\tilde{\tau}$. The space $\mathcal{K}_{\tilde{\sigma}_1;\tilde{\sigma}_2}$ is defined as \myeqref{equ: def of K_sigma12}. 
        \item ``None cohomogeneity-one action inducing $\F$'' in Table \ref{table: intro: Classification of spherical type in closed 3d} and Table \ref{table: intro: Classification of real-projective type in closed 3d} means that no transnormal system in this equivalence class can be induced by a cohomogeneity-one action. 
        \item Some spaces with simple structures are contained in ``other $\mathcal{M}_{\tilde{\tau}}$''. For example, when $\tilde{\tau}$ is the identity of the torus, $\mathcal{M}_{\tilde{\tau}}$ is homeomorphic to $\mS^1\times\mS^1\times\mS^1$.
        \item There may exist not only transnormal systems of different types (as shown in Table \ref{table: intro: closed 3-manifold}), but also multiple transnormal systems of the same type on a given manifold. For instance, the standard 3-sphere admits a transnormal system of spherical type whose DR-foils are all homeomorphic to the 2-sphere, as well as another whose DR-foils are all homeomorphic to the torus; for $\mR P^2\times\mS^1$, it admits both a transnormal system of real-projective type whose DR-foils are all homeomorphic to the torus and another  whose DR-foils are all homeomorphic to the Klein bottle. 
        \item Since many Klein spaces are also lens spaces (see \cite{kim1978some}), some 3-manifolds simultaneously belong to both "Other lens spaces" and "Other Klein spaces" in Table \ref{table: intro: closed 3-manifold}, such as $M(1,q)$ for odd $q$. Likewise, some 3-manifolds simultaneously belong to both "Other $\mathcal{M}_{\tilde{\tau}}$" and "Other $\mathcal{K}_{\tilde{\sigma}_1;\tilde{\sigma}_2}$" (for example, $\mathcal{K}_{\sigma_{\mathbf{K}};\sigma_{\mathbf{K}}}$ is also a mapping torus, where $\sigma_{\mathbf{K}}$ is a fixed-point-free involution on the Klein bottle). Other spaces in Table \ref{table: intro: closed 3-manifold} are all topologically distinct. This follows from the fact that, aside from the first six cases listed in Table \ref{table: intro: closed 3-manifold}, all other lens spaces and Klein spaces have finite fundamental groups with more than two elements, while all other $\mathcal{M}_{\tilde{\tau}}$ and other $\mathcal{K}_{\tilde{\sigma}_1;\tilde{\sigma}_2}$ are infinitely covered by $\mR^3$. 
    \end{itemize}
    \par    
    Since the classification is carried out in the sense of equivalence, a natural question arises: does each equivalence class admit a representative with desired geometric properties? By `desired', we mean the following conditions on the representative: 
    \begin{enumerate}
        \item[(A)] Each foil has constant mean curvature.
        \item[(B)] Each foil has constant principal curvatures.
        \item[(C)] The ambient manifold is locally isometric to a standard space.
        \item[(D)] Conditions (A) and (C) hold simultaneously. 
        \item[(E)] Conditions (B) and (C) hold simultaneously. 
    \end{enumerate}
    \par    
    Note that for transnormal systems induced by cohomogeneity one actions, all above conditions are clearly satisfied. Due to Theorem \ref{thm: intro: four equivlent conditions}, any transnormal system on a compact manifold is equivalent to an isoparametric foliation, which satisfies condition (A). To express condition (B), we define:
    \begin{defi}
        A transnormal system $\F$ on a Riemannian manifold is called a {\it CPC transnormal system} if each foil in $\F$ has constant principal curvatures.  
    \end{defi}
    \par   
    Some properties of CPC transnormal systems of codimension one are studied in \cite{ge2014geometry}. In the remainder of this paper, we focus on the equivalence classes of transnormal systems on orientable manifolds, and obtain the following conclusion:
    \begin{thm}\label{thm: intro: CPC has standard}
        Suppose $M$ is an orientable compact Riemannian 3-manifold, and $\F$ is an embedded CPC transnormal system of codimension one. Then, up to equivalence within CPC transnormal systems,
        $M$ is locally isometric to $\mS^3$, $\mS^2\times\mR$, $\mR^3$, $\mH^2\times\mR$, Nil geometry, or Sol geometry. 
    \end{thm}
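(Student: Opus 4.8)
The plan is to pass to a convenient representative in the given equivalence class and then read off the local geometry from the way the foils sit inside $M$. By Theorem~\ref{thm: intro: four equivlent conditions}, $\F$ is equivalent to an isoparametric foliation arising from a linear double disk bundle decomposition of $M$, whose base spaces are points, circles, or closed surfaces; the four types of Tables~\ref{table: intro: Classification of spherical type in closed 3d}--\ref{table: intro: closed 3-manifold} correspond to the dimensions of these bases. When both bases are surfaces (the toric and Klein-bottled types) $M$ is a surface bundle over $\mS^1$ and the foils are its fibres. Since the conclusion is local and invariant under equivalence within CPC systems, I would fix, in each class that admits a CPC representative, the representative singled out by this decomposition and analyze the metric transverse to the regular foils.

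First I would set up the local structure equations. On the open dense set $M_0$ swept out by the regular foils, let $\nu$ be the unit normal field and write the metric in Fermi form $\metr=\dd{t}^2+\metr_t$, where $t$ is arclength along the normal geodesics and $\metr_t$ is the induced metric on the parallel foil $N_t$. The shape operator $S_t$ obeys a Riccati equation $\partial_t S_t=-S_t^2-R_\nu$ with $R_\nu(X)=R(X,\nu)\nu$. The CPC hypothesis says the eigenvalues $\kappa_1(t),\kappa_2(t)$ of $S_t$ are functions of $t$ alone; substituting into the Riccati equation forces the normal sectional curvatures $K(e_i,\nu)$ to depend only on $t$, while the Gauss equation $K_{N_t}=K(e_1,e_2)+\kappa_1\kappa_2$ controls the tangential one. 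Combining these with the Codazzi equation and $e_i(\kappa_j)=0$, I would show that each regular foil carries constant Gauss curvature and that all sectional curvatures of $M$ are functions of $t$; hence $M_0$ is locally homogeneous, and because the foils are closed their constant curvature forces them to be round spheres, projective planes, flat tori, flat Klein bottles, or—in the trivial-bundle case of higher genus—hyperbolic surfaces.

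Next I would match each leaf type with the admissible homogeneous models, integrating the Riccati equation to pin down the warping and the closing-up conditions at the singular foils. Umbilic sphere or $\mR P^2$ foils that focus to points give the space form $\mS^3$, while constant-distance ones give $\mS^2\times\mR$; totally geodesic flat foils give $\mR^3$ when the transverse curvature vanishes and $\mH^2\times\mR$ when a hyperbolic factor is present; and the non-umbilic flat families, for which $K(e_1,e_2)=-\kappa_1\kappa_2$, realize the Clifford-torus foliation of $\mS^3$ together with the nilpotent and solvable models, the values of $K(e_i,\nu)$ together with the holonomy of the principal line field recorded by the Codazzi identity distinguishing Nil from Sol. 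For the toric and Klein-bottled classes I would phrase this through the monodromy of the mapping torus: CPC forces the monodromy to preserve a constant-curvature metric on the fibre up to the rigid evolution prescribed by $S_t$, so a torus fibre yields $\mR^3$, Nil, or Sol according as the monodromy is periodic, parabolic, or Anosov, and a higher-genus fibre yields $\mH^2\times\mR$ exactly when the monodromy is periodic.

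The main obstacle is ruling out $\mH^3$ and $\widetilde{\operatorname{SL}}(2,\mR)$, equivalently identifying the equivalence classes that carry no CPC representative at all. For a surface bundle the rigidity just described is decisive: a hyperbolic fibre admits no CPC parallel foliation with pseudo-Anosov monodromy, since such a monodromy preserves no constant-curvature metric, so the fibred hyperbolic $3$-manifolds—the only route by which $\mH^3$ could enter the classification—support no CPC transnormal system. For $\widetilde{\operatorname{SL}}(2,\mR)$ I would argue through its Seifert structure: it is fibred over a hyperbolic base orbifold with nonzero Euler number, and the local product splitting produced by the CPC analysis (either totally geodesic foils or $K(e_1,e_2)=-\kappa_1\kappa_2$ with flat leaves) is incompatible with a non-integrable horizontal distribution, forcing the Euler number to vanish and hence the geometry to be $\mH^2\times\mR$ rather than $\widetilde{\operatorname{SL}}(2,\mR)$. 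Collecting the surviving models yields exactly $\mS^3$, $\mS^2\times\mR$, $\mR^3$, $\mH^2\times\mR$, Nil, and Sol; I expect the delicate point to be the simultaneous control of the Codazzi holonomy term and the closing-up at the singular foils, which is what forces the twisted geometries down to their untwisted or nilpotent and solvable representatives.
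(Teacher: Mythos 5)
Your overall architecture matches the paper's: reduce to the four types (toric, Klein-bottled, spherical, real-projective), treat the first two via the monodromy of the mapping torus with the Nielsen--Thurston trichotomy (periodic/reducible/Anosov giving $\mR^3$, Nil, Sol for torus fibres, and periodic monodromy giving $\mH^2\times\mR$ for higher genus), and handle the spherical and real-projective types by the disk-bundle decomposition. However, the analytic core of your argument rests on a false intermediate claim. You assert that the CPC hypothesis, fed through the Riccati, Gauss and Codazzi equations, forces each regular foil to have constant Gauss curvature and $M_0$ to be locally homogeneous. Take any closed surface $\Sigma$ with an arbitrary non-constant-curvature metric and let $M=\Sigma\times\mS^1$ carry the product metric: the foliation by the fibres $\Sigma\times\{pt\}$ is an embedded transnormal system of toric type whose foils are totally geodesic, hence CPC, yet their Gauss curvature is whatever you chose on $\Sigma$. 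The Riccati equation only controls $R_\nu$, not the tangential curvature $K(e_1,e_2)$, so nothing forces $K_{N_t}$ to be constant along a leaf. The theorem is a statement \emph{up to equivalence} --- it asserts the existence of a CPC representative on a standard geometry, not that the given CPC metric is locally homogeneous --- and the paper's proof accordingly works entirely at the level of equivalence classes, building explicit standard representatives (Examples \ref{example: periodic homeo inducing tran sys}, \ref{example: dehn twist of torus inducing tran sys}, \ref{example: Anosov homeo of torus inducing tran sys} and Propositions \ref{prop: spherical type equiv CPC}, \ref{prop: real-projective type equiv CPC}) rather than analyzing the given metric.

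The second gap is the rigidity step for fibres of genus $g\ge 2$: one must show that if \emph{some} metric in the equivalence class makes $\F$ CPC, then the monodromy is isotopic to a periodic homeomorphism, and your proposal only gestures at this (``CPC forces the monodromy to preserve a constant-curvature metric up to the rigid evolution prescribed by $S_t$''). The paper's mechanism is concrete: since $\chi(S_g)\neq 0$, the line-bundle version of the Poincar\'e--Hopf theorem forbids a global principal-direction line field, so the two constant principal curvatures on each leaf must coincide; the resulting umbilicity yields a closed conformal vector field, whence by Montiel's theorem the essential cover is a warped product $\dd{t}^2+f^2(t)\metr_{S_g}$; the monodromy is then an isometry of $(S_g,\metr_{S_g})$, hence conformal for a hyperbolic metric by uniformization, hence of finite order. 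Without the umbilicity step, your claim that a pseudo-Anosov monodromy ``preserves no constant-curvature metric'' has no purchase, because you have not shown that the monodromy must preserve any fixed metric on the fibre at all. (Your excursion ruling out $\widetilde{\operatorname{SL}}(2,\mR)$ via Seifert Euler numbers is unnecessary in the paper's scheme, since no geometry ever has to be excluded: the admissible classes are enumerated and representatives are built directly.)
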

    \par   
    As a consequence, we see that if condition (B) holds, then a refined choice of representative can ensure that (C) and (E) hold as well, and the standard spaces involved are precisely six of the eight Thurston's geometries. However, somewhat surprisingly, condition (B) does not always hold:
    \begin{thm}\label{thm: intro: no CPC in some equivalence class}
        There exist embedded transnormal systems of codimension one on compact Riemannian 3-manifolds such that their equivalence classes contain no CPC transnormal systems.
    \end{thm}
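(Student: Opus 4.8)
The plan is to isolate a single equivalence class---one of the rows marked ``None'' in Table~\ref{table: intro: Classification of real-projective type in closed 3d}---and to show that no metric representing it can make every foil of constant principal curvature. Concretely, I would take $M$ to be an orientable Klein space $M(p,q)$ carrying a transnormal system $\F$ of real-projective type: the regular (DR-)foils are tori, one singular foil $K$ is a one-sided Klein bottle (the codimension-one base), and the other singular foil $C$ is a circle (the codimension-two base). Its existence as an embedded transnormal system of codimension one follows from the double disk bundle decomposition $M=(\text{twisted }I\text{-bundle over }K)\cup_{T^2}(\text{solid torus around }C)$ together with Theorem~\ref{thm: intro: four equivlent conditions}, so the equivalence class is nonempty. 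Since the diffeomorphism type and codimension of each foil are preserved under the equivalence of Definition~\ref{def: equivalence of tran sys}, any CPC representative would again have a one-sided Klein-bottle singular foil and a circular singular foil, joined by the normal geodesics of the transnormal function.

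Assume, for contradiction, that the class contains a CPC transnormal system. By Theorem~\ref{thm: intro: CPC has standard} I may pass to an equivalent CPC system for which $M$ is locally isometric to one of the six geometries there; as a closed orientable Klein space is irreducible and carries only Seifert structures with nonnegative orbifold Euler characteristic, it suffices to treat the geometries $\mS^3$, $\mR^3$, and Nil. Two structural facts then pin down the foil $K$. First, as a focal variety of the transnormal function it is a minimal surface, so its two constant principal curvatures are $\pm\kappa$. Second, combining the Gauss equation with Gauss--Bonnet on $K$ (where $\chi(K)=0$) yields $\int_K \bar K_{\mathrm{tan}}\,dA=\kappa^2\,\mathrm{Area}(K)\ge 0$, with $\bar K_{\mathrm{tan}}$ the ambient sectional curvature along $TK$; in $\mR^3$ this forces $\kappa=0$ (so $K$ is totally geodesic), while in $\mS^3$ it forces $\kappa=\pm1$ (so $K$ is locally a piece of the Clifford torus).

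The contradiction is then extracted from the requirement that the normal geodesics issuing from $K$ in its single normal direction refocus onto the one-dimensional set $C$. In the flat case this is immediate: the normal geodesics of a totally geodesic surface in $\mR^3/\Gamma$ are mutually parallel and develop no focal points, so a surface's worth of them cannot collapse onto a curve; the opposite focal set would again be a surface, producing a toric or Klein-bottled system rather than the real-projective one. In the spherical case the refocusing is possible only if $K$ is a global quotient of the Clifford torus by a fixed-point-free involution that is orientation-reversing on the torus yet extends to an isometry keeping $M=\mS^3/\Gamma$ orientable; ruling this out---equivalently, showing the two core circles of the Clifford torus cannot be folded into a single circle $C$ by such an involution---is the crux. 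The Nil case is handled by the corresponding refocusing analysis along the Seifert fibers.

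I expect the spherical sub-case to be the main obstacle, since it is exactly where a minimal CPC Klein bottle comes closest to existing: excluding it requires classifying which isometric involutions of the Clifford torus are simultaneously free, orientation-reversing on the torus, and orientation-preserving on the ambient $\mS^3$, and checking that none is compatible with both a single circular focal set and the Klein-space quotient. Establishing this incompatibility in all three geometries completes the proof and exhibits the promised gap: the class is realized by an isoparametric (hence constant-mean-curvature) foliation via Theorem~\ref{thm: intro: four equivlent conditions}, yet admits no CPC transnormal system.
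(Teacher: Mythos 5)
There is a genuine gap, and it lies in the choice of example rather than in any single step: the equivalence class you pick \emph{does} contain a CPC representative, so the contradiction you are trying to extract cannot exist. The entries marked ``None'' in Table~\ref{table: intro: Classification of real-projective type in closed 3d} mean that no system in that class is induced by a \emph{cohomogeneity-one action}; they do not mean the class lacks a CPC representative. Proposition~\ref{prop: real-projective type equiv CPC} states precisely the opposite of what you need: every transnormal system of real-projective type on a compact $3$-manifold (in particular on every Klein space $M(p,q)$) is, up to equivalence, a CPC transnormal system on a metric locally isometric to $\mS^3$ or $\mS^2\times\mR$. The step you flag as ``the crux''---excluding the spherical sub-case---is exactly where the argument collapses: the Clifford torus in $\mS^3$ is a minimal surface with constant principal curvatures $\pm 1$ whose normal geodesics do refocus onto the two core circles, and the free isometric involution defining a Klein space folds this configuration into a CPC Klein bottle SR-foil together with a single circular S-foil. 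So the configuration you hope to rule out is realized, not obstructed. (The same remark disposes of the other ``None'' row: the $\mS^1\times\mR P^2$ class with Klein-bottle foils also has a CPC representative.)

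The paper's obstruction lives in an entirely different part of the classification, namely the toric and Klein-bottled types with regular foils of genus $g\ge 2$. By Proposition~\ref{prop: CPC for toric type}(ii), a toric-type system on a surface bundle over the circle is equivalent to a CPC system \emph{only if} the monodromy is isotopic to a periodic homeomorphism: the CPC hypothesis forces the two principal curvatures on each foil to coincide (line-bundle Poincar\'e--Hopf, since $\chi(S_g)\neq 0$), which produces a closed conformal vector field, hence a warped-product metric, hence the monodromy is an isometry of a fixed hyperbolic surface and therefore of finite order. Taking the monodromy to be pseudo-Anosov (Example~\ref{example: mapping torus of pseudo-anosov of hyperbolic surface}) then yields a nonempty equivalence class with no CPC representative. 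If you want to salvage your write-up, replace the Klein space by a hyperbolic mapping torus and replace the focal-set refocusing analysis by the periodicity criterion of Proposition~\ref{prop: CPC for toric type}; the Gauss--Bonnet step in your argument is the right instinct but must be applied to the regular foils (where $\chi\neq 0$ forces umbilicity) rather than to the singular Klein bottle (where $\chi=0$ gives no constraint).
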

    \par    
    Additionally, the equivalence classes in Theorem \ref{thm: intro: no CPC in some equivalence class} sometimes do not even contain any transnormal system satisfying condition (D), as elaborated in Example \ref{example: mapping torus of pseudo-anosov of hyperbolic surface}. This suggests that, compared to cohomogeneity-one actions, transnormal systems impose significantly weaker topological and geometric constraints on the manifolds. On the other hand, while the topological equivalence between isoparametric foliations and transnormal systems has already been established in Theorem \ref{thm: intro: four equivlent conditions}, we now identify a critical distinction between isoparametric foliations and CPC transnormal systems in dimension three. This aspect has not been addressed in any dimension in the existing literature.

\section{Preliminaries}
    Let $\F$ be an embedded transnormal system of codimension one on a complete Riemannian manifold $M$. We will provide some basic definitions and discuss some global properties, which were established in our previous work \cite{li2024}. 
    \begin{defi}
        A foil $L$ of greatest dimension in $\F$ is a {\it regular} foil. We call $L$ a {\it double-sided regular foil} (abbreviated as DR-foil) if the normal sphere bundle of $L$ is disconnected; otherwise, $L$ is a {\it single-sided regular foil} (abbreviated as SR-foil). Consistently, A foil of higher codimension is called a {\it singular foil} (abbreviated as S-foil).
    \end{defi}
    \par    
    For any foil $L\in\F$, fixing an unit normal vector $V_0$ to $L$, we denote by $B(L)$ the connected component of its normal sphere bundle $\nsb{L}$ containing $V_0$. Let 
    \[
        T:=\sup\left\{t\in(0,+\infty): \exp_L:(0,t)\times\text{NS}(L)\to \mathcal{N}_{t}(L)\backslash L\ \text{is a homeomorphism.} \right\}
    \]
    be the \textit{injectivity radius} of $L$. It has a property related to the types of foils.
    \begin{lem}[\cite{li2024}]\label{lemma: property of inj radius in special cases}
        Let $\F$ be an embedded transnormal system of codimension one on a complete Riemannian manifold $M$, and suppose $L\in\F$ is a foil with injectivity radius $T<+\infty$. 
        \begin{enumerate}[(i)]
            \item If $L$ is an SR-foil or S-foil, then $\exp_L\big(T,\text{NS}(L)\big)$ is also an SR-foil or S-foil.
            \item If all foils in $\F$ are DR-foils, then $\exp_L(T,B(L))=\exp_L(-T,B(L))$. 
        \end{enumerate}
    \end{lem}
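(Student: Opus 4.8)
The plan is to analyze how the normal exponential map $\exp_L$ degenerates at the critical radius $t=T$ and to read off the type of the resulting focal foil from the connectedness of the normal sphere bundles involved. Throughout I use the defining property of a transnormal system: a geodesic meeting one foil orthogonally meets orthogonally every foil it encounters, so each curve $\gamma_v(t)=\exp_L(t,v)$ with $v\in\nsb{L}$ is a normal geodesic threading the partition. The first step is to show that $\exp_L\big(T,\nsb{L}\big)$ lies in a single foil $F$. For $t\in(0,T)$ the level hypersurfaces $\exp_L(t,\nsb{L})$ are unions of foils; as $t\to T^{-}$ they converge, and since foils are closed and the partition is locally finite along the relatively compact arc $\gamma_v\big([0,T]\big)$, the endpoints $\gamma_v(T)$ all land in one foil $F$, with $\dot\gamma_v(T)$ normal to $F$. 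This produces a map $\nsb{L}\to\nsb{F}$, $v\mapsto\dot\gamma_v(T)$, whose behavior controls the type of $F$.

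The crux is a dichotomy at $t=T$: since $\exp_L$ is a homeomorphism for $t<T$ but fails at $T$, either (a) \emph{focal collapse} occurs, where $v\mapsto\gamma_v(T)$ is non-injective or the differential of $\exp_L(T,\cdot)$ degenerates, forcing $F$ to have higher codimension or to be one-sided --- an S-foil or SR-foil; or (b) \emph{transversal wrap-around} occurs, where $\exp_L(T,\cdot)$ embeds $\nsb{L}$ onto a two-sided regular hypersurface and the normal geodesics pass through it into already-covered territory, so $F$ is a DR-foil. For part (i), $L$ being an SR-foil or S-foil makes $\nsb{L}$ connected, so its image in $\nsb{F}$ lies in a single component and the geodesics from $L$ fill a one-ended neighborhood (a disk bundle over an S-foil, or a twisted $I$-bundle over an SR-foil). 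Were $F$ a DR-foil as in case (b), the region beyond $F$ would be new and $\exp_L$ would extend past $T$, contradicting that $T<+\infty$ is the injectivity radius; hence case (a) holds and $F$ is an SR-foil or S-foil.

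For part (ii), every foil is a DR-foil, so case (a) is excluded outright --- a collapse would manufacture an SR- or S-foil. Thus the degeneration is the wrap-around of case (b). Equivalently, the leaf space has no boundary point, and since $T<+\infty$ forbids an infinite leaf space, it must be a circle; then $T$ is half its length and a single ``antipodal'' foil is reached by flowing distance $T$ in either normal direction. Concretely, writing $-B(L)=\{-v:v\in B(L)\}$ for the second component of $\nsb{L}$, flowing the distance $T$ along $B(L)$ and along $-B(L)$ lands on this common foil, which is exactly the asserted equality $\exp_L\big(T,B(L)\big)=\exp_L\big(-T,B(L)\big)$ since $\exp_L(-T,v)=\exp_L(T,-v)$.

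I expect the main obstacle to be the rigorous justification of the single-foil claim together with the dichotomy in part (i), in particular ruling out a DR focal foil when $L$ is an SR- or S-foil. This requires carefully tracking the connectedness of the normal sphere bundles along the flow and correctly distinguishing a one-ended normal neighborhood (which forces collapse) from one that wraps around. The leaf-space formulation makes the logic transparent, but invoking it presupposes that the leaf space is a one-dimensional manifold with boundary whose boundary consists exactly of the SR- and S-foils --- a structural fact that must either be established first or circumvented by the direct normal-geodesic argument above.
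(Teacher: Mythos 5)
The paper itself does not prove this lemma: it is imported from the authors' earlier work \cite{li2024} and used as a black box, so there is no in-paper argument to compare yours against. Judged on its own terms, your overall strategy --- determine the type of the foil reached at the critical radius $T$ from the connectedness of the normal sphere bundles, with the leaf-space picture in the background --- is the standard and correct route, and several pieces are exactly right: the one-sided-arrival observation for a two-sided target foil, the identity $\exp_L(-T,v)=\exp_L(T,-v)$, and the reduction of (ii) to the leaf space being a circle of circumference $2T$.

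There are, however, two genuine gaps where the assertions carry the whole weight of the proof. First, the claim that $\exp_L\big(T,\nsb{L}\big)$ lies in a single foil does not follow from ``local finiteness of the partition'' --- the foils form a continuum, not a locally finite family. What is actually needed is the equidistance property of transnormal systems (each foil sits inside a level set of $d(\cdot,L)$), combined with the fact that $d\big(\exp_L(T,v),L\big)=T$, which itself must be extracted from the homeomorphism property for $t<T$. Second, and more seriously, the crux of (i) --- ``were $F$ a DR-foil, $\exp_L$ would extend past $T$'' --- is precisely the statement to be proved and is left as a one-line appeal to ``new territory.'' Closing it requires showing that when $\nsb{L}$ is connected and $F$ is two-sided: all arrival vectors $\dot\gamma_v(T)$ lie in one component of $\nsb{F}$ (you have this); hence $\exp_L(T,\cdot)$ is injective by backward uniqueness of geodesics; its differential is nowhere degenerate at $t=T$ (otherwise $F$ is a focal variety --- this needs an argument, not a case label in a dichotomy); it is surjective onto $F$; and the extension to $(0,T+\epsilon)\times\nsb{L}$ is still a homeomorphism onto $\mathcal{N}_{T+\epsilon}(L)\setminus L$, which requires checking that points just beyond $F$ are at distance greater than $T$ from $L$. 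Part (ii) then inherits the same gap, since ``a collapse would manufacture an SR- or S-foil'' is the unproven half of your dichotomy; and your fallback to the leaf space presupposes that $M/\F$ is a $1$-manifold whose boundary points are exactly the SR- and S-foils, which is a downstream consequence of this lemma rather than an available input. The skeleton is sound, but as written the proof defers its hardest steps to facts it is supposed to establish.
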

    \par    
    Moreover, defining the {\it diameter} of $\F$ as 
    \[
        D:=\sup\left\{d(L_1,L_2): L_1,L_2\in\F \right\},
    \]
    we have a criterion to classify the transnormal systems. 
    \begin{lem}[\cite{li2024}]\label{lemma: SR-foil and S-foil at most 2}
        Suppose $\F$ is an embedded transnormal system of codimension one on a complete connected Riemanniann manifold $M$. Let $N_{\text{C}}$ denote the sum of the numbers of SR-foils and S-foils contained in $\F$. Then, $N_{\text{C}}\le 2$, and more precisely, 
        \begin{itemize}
            \item if $D$ is infinity, then $N_\text{C}=1$ or $0$, and $M$ is diffeomorphic to the normal bundle of a foil $L\in \F$, where $L$ is the unique SR-foil or S-foil whenever $N_\text{C}=1$, or can be taken to be an arbitrary DR-foil whenever $N_\text{C}=0$;
            \item if $D$ is finite, then $N_\text{C}=2$ or $0$, and $M$ is diffeomorphic to the union of two normal disk bundles over foils $L,L'\in \F$
            glued together with their common boundary, where $L,L'$ are the only two foils whose normal sphere bundle is connected whenever $N_\text{C}=2$,
            or can be taken to be any pair of DR-foils satisfying $d(L,L')=D$ whenever $N_\text{C}=0$.
        \end{itemize}
    \end{lem}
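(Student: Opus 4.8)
The plan is to reconstruct $M$ by gluing the tubular neighborhoods that the exponential maps of individual foils provide, controlling every gluing through the injectivity radius together with Lemma \ref{lemma: property of inj radius in special cases}. The guiding heuristic is that the leaf space $M/\F$ is a connected one-dimensional manifold with boundary whose boundary points correspond exactly to the SR-foils and S-foils; as such a space is homeomorphic to $\mR$, $\mS^1$, $[0,+\infty)$, or $[0,1]$, it has at most two boundary points, which already accounts for the bound $N_{\text{C}}\le 2$ and for the split governed by whether $D$ is finite. I would make this precise without forming the quotient, instead running an open-and-closed (exhaustion) argument inside the complete connected manifold $M$.

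First I would assume $\F$ contains a special foil $L$, so $\text{NS}(L)$ is connected and $B(L)=\text{NS}(L)$; let $T$ be its injectivity radius. If $T=+\infty$, I would show that $\exp_L\colon(0,+\infty)\times\text{NS}(L)\to M\setminus L$ is a homeomorphism whose image is both open and closed in $M\setminus L$, hence all of it. This identifies $M$ with the total space of the normal bundle of $L$; every other foil is an intermediate level $\exp_L(t,\text{NS}(L))$, which carries the global unit normal $-\partial_t$ and is therefore a two-sided DR-foil, so $N_{\text{C}}=1$; and $D=+\infty$ since the normal geodesics realize the distance from $L$, giving $d\big(L,\exp_L(t,\text{NS}(L))\big)=t\to+\infty$. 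If instead $T<+\infty$, Lemma \ref{lemma: property of inj radius in special cases}(i) yields a second special foil $L':=\exp_L(T,\text{NS}(L))$, and I would prove that the closed tube $\exp_L([0,T]\times\text{NS}(L))$ is saturated, exhausts $M$ by the same argument, and contains no special foil besides $L$ and $L'$. Splitting it along any intermediate regular foil then exhibits $M$ as two normal disk bundles over $L$ and $L'$ glued along their common boundary, so $N_{\text{C}}=2$ and $D=d(L,L')<+\infty$.

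Next I would assume every foil is a DR-foil, fix one such $L$, and again let $T$ be its injectivity radius. If $T=+\infty$, the two-sided tube over the disconnected set $\text{NS}(L)$ is open and closed in $M\setminus L$, so $M$ is the trivial normal bundle $L\times\mR$, $N_{\text{C}}=0$, and $D=+\infty$; here $L$ may be taken arbitrarily among the DR-foils. If $T<+\infty$, Lemma \ref{lemma: property of inj radius in special cases}(ii) gives $\exp_L(T,B(L))=\exp_L(-T,B(L))$, so the foils reached from the two sides of $L$ coincide and the leaf space closes into a circle. Choosing any pair of DR-foils $L,L'$ with $d(L,L')=D$, the two normal disk bundles centered at $L$ and $L'$ then cover $M$ and are glued along their common boundary, giving $N_{\text{C}}=0$ and $D<+\infty$.

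Collecting the four cases proves $N_{\text{C}}\le 2$, with $N_{\text{C}}\in\{0,1\}$ exactly when $D=+\infty$ and $N_{\text{C}}\in\{0,2\}$ exactly when $D<+\infty$, together with the stated bundle descriptions. The step I expect to be the main obstacle is the exhaustion: verifying that the closures of the tubes issued from the chosen foil genuinely cover all of $M$, and that no extra special foil is hidden between them. This is precisely where completeness and connectedness of $M$ are used, through the open-and-closed argument, in tandem with Lemma \ref{lemma: property of inj radius in special cases}, which certifies that a tube terminates at a special foil when $T<+\infty$ and $L$ is special, or folds back when all foils are DR, so that the disk bundles glue along their common boundary with no room for a further special foil.
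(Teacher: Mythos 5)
A preliminary remark on the comparison itself: the paper does not prove this lemma --- it is imported verbatim from \cite{li2024} and stated without proof in the Preliminaries --- so there is no in-paper argument to measure yours against. Judged on its own terms, your strategy is the natural one and matches the structure the statement demands: fix a foil, exhaust $M$ by the image of its normal exponential map via an open-and-closed argument using completeness and connectedness, and split into four cases according to whether $\F$ contains an SR- or S-foil and whether the injectivity radius $T$ is finite, with Lemma \ref{lemma: property of inj radius in special cases} governing what happens at parameter $T$. The four cases do land exactly on the dichotomy of the statement ($N_{\text{C}}=1$ or $0$ when $D=+\infty$, $N_{\text{C}}=2$ or $0$ when $D<+\infty$, with the corresponding bundle descriptions), so the outline is sound.

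The one place where your proposal leans on an unproved claim that does real work is the repeated assertion that each parallel set $\exp_L\big(t,\text{NS}(L)\big)$ is a single foil and that every foil other than $L$ arises as such a set. This is not a consequence of $\exp_L$ being a homeomorphism off $L$: a priori a connected parallel set could be a union of several foils accumulating on one another, or a single foil could meet several parameter levels. What rules this out is the equidistance property of transnormal systems --- the normal-geodesic parameter $t$ is constant on each foil --- and this is precisely where the transnormality hypothesis enters the proof; it can be extracted, for instance, from the fact (quoted in the paper) that an embedded codimension-one transnormal system is induced by a transnormal function $f$ with $|\nabla f|^2=b(f)$, so that the distance between level sets is computed by $\int \!df/\sqrt{b(f)}$. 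The same property is what justifies your Case B2 claim that the disk bundles over an arbitrary pair of DR-foils realizing $D$ cover $M$ and meet only along their boundaries (foil distance must agree with arc distance in the circle $\mR/2T\mZ$), and your claim that $d\big(L,\exp_L(t,\text{NS}(L))\big)=t$, hence $D=+\infty$, in Case A1. A second, purely cosmetic repair: your open-and-closed argument is phrased inside $M\setminus L$, whose connectedness you do not know at that stage; it should be run on the closed tube $\exp_L\big([0,t]\times\text{NS}(L)\big)$, including $L$, inside the connected manifold $M$. With the equidistance property supplied and that rephrasing, your proof goes through.
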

    Based on this result, embedded transnormal systems of codimension one are classified into seven types as shown in Table \ref{table: seven types}, with a detailed discussion provided in \cite{li2024}. 
    \begin{table}[htbp]
        \centering
        \caption{Properties of the seven types.}
        \label{table: seven types}
        \renewcommand\arraystretch{1.2}
        \begin{tabular}{|c|c|c|c|c|}
        \noalign{\hrule height 1pt}
        Type & $N_{\text{SR}}$ & $N_{\text{S}}$ & Diameter of $\mathcal{F}$ & Foil space $M/\mathcal{F}$ \\ 
        \hline
        {\it Cylindrical} & 0               & 0              &  $+\infty$                &    $\mR$             \\ 
        \hline
        {\it Planar}      & 0               & 1              &  $+\infty$                &    $[0,+\infty)$     \\ 
        \hline
        {\it Twisted-cylindrical} & 1       & 0              &  $+\infty$                &    $[0,+\infty)$     \\ 
        \hline
        {\it Toric}       &  0              & 0              &  $T<+\infty$              &    $\mR/2T\mathbb{Z}$   \\ 
        \hline
        {\it Spherical}   &  0              & 2              &  $T<+\infty$              &    $[0,T]$           \\ 
        \hline
        {\it Real-projective}   &  1        & 1              &  $T<+\infty$              &    $[0,T]$           \\ 
        \hline
        {\it Klein-bottled}   &  2          & 0              &  $T<+\infty$              &    $[0,T]$           \\ 
        \noalign{\hrule height 1pt}
        \end{tabular}
    \end{table}  
    Among all types, if the foil space $M/\F$ has an endpoint, then the corresponding foil is either an SR-foil or an S-foil. 

\section{Covering and lifting on transnormal systems}\label{sec: covering and lifting}
    We explore the relationships among transnormal systems. Throughout this section, compactness of the ambient manifolds is not assumed.
    \subsection{Basic definitions and properties}
    We introduce some actions on the manifolds that are adapted to the transnormal systems. 
    \begin{defi}\label{def: foil-tran and foil-inv}
        Let $\F$ be an embedded transnormal system of codimension one on a complete Riemannian manifold $M$. 
        \begin{enumerate}[(i)]
            \item A diffeomorphism $\tau$ on $M$ is called a {\it foil-translation} with respect to $\F$, if there exists a foil $L\in\F$ and a positive real number $t_0$ such that 
            \begin{equation*}
                \tau\big(\exp_{L}(t,B(L))\big)=\exp_{L}\big(t+t_0,B(L)\big)
            \end{equation*}
            holds for any $t\in\mR$. 
            \item A diffeomorphism $\sigma$ on $M$ is called a {\it foil-reflection} with respect to $\F$, if there exists a DR-foil $L\in\F$ such that 
            \begin{equation}\label{equ: def of foil-reflection}
                \sigma\big(\exp_{L}(t,B(L))\big)=\exp_{L}(-t,B(L))
            \end{equation}
            holds for any $t\in\mR$. This DR-foil $L$ is called the {\it mirror foil} of the foil-reflection $\sigma$. 
        \end{enumerate} 
    \end{defi}
    \par 
    \begin{rmk}
        In the definition of a foil-reflection, it is essential that the mirror foil be a DR-foil. If $L$ is an SR-foil or an S-foil, then the identity map satisfies equation \myeqref{equ: def of foil-reflection}. However, the identity map clearly should not be regarded as a foil-reflection.
    \end{rmk}
    \par 
    We have known the foil space $M\big/\F$ is homeomorphic to one of $\mR$, $[0,+\infty)$, $[0,1]$ or $\mS^1$. From Definition \ref{def: foil-tran and foil-inv}, we can derive that the foil-translations and foil-reflections induce the corresponding translations and reflections on $M\big/\F$, which implies the following two facts:
    \begin{itemize}
        \item If $\tau$ is a foil-translation with respect to $\F$, then the foil space $M\big/\F$ must have no endpoints, or equivalently, $\F$ is of cylindrical or toric type.
        \item If $\sigma$ is a foil-reflection with respect to $\F$, then the foil space $M\big/\F$ must have either 0 or 2 endpoints, and the foils corresponding to the endpoints, if they exist, must be either both SR-foils or both S-foils. Thus, $\F$ is of cylindrical, toric, spherical or Klein-bottled type. 
    \end{itemize}
    \par 
    For an embedded transnormal system $\F$ of codimension one, suppose that $L_1$ and $L_2$ are distinct foils in $\F$, each being either an SR-foil or an S-foil. According to Lemma \ref{lemma: property of inj radius in special cases}, these two foils share a common finite injectivity radius $T$, and
    \begin{equation}\label{equ: when two Non-two-sided-foil}
        \mathcal{N}_{T}(L_1)\cap \mathcal{N}_{T}(L_2)=\mathcal{N}_{T}(L_1)\backslash L_1=M\backslash\left(L_1\cup L_2\right).
    \end{equation}
    The inverse $V\mapsto -V$ on $\nsb{L_1}$, denoted by $\tilde{\sigma}_{L_1}$ here, induces an involution on $\mathcal{N}_{T}(L_1)$ as  
    \begin{equation*}
        \sigma_{L_1}\big(\exp_{L_1}(t,V)\big):=\exp_{L_1}(t,\tilde{\sigma}_{L_1}(V)), \quad (t,V)\in[0,T)\times\nsb{L_1}.
    \end{equation*}
    Similarly, we can define $\tilde{\sigma}_{L_2}$ on $\nsb{L_2}$ and $\sigma_{L_2}$ on $\mathcal{N}_{T}(L_2)$. Given $t_0\in(0,T)$ and $V\in\nsb{L_1}$, it follows from \myeqref{equ: when two Non-two-sided-foil} that $\exp_{L_1}(t_0,V)\in \mathcal{N}_{T}(L_2)\backslash L_2$, and thus, there exists a unique $V\uppri\in \text{NS}(L_1)$ such that $\sigma_{L_2}\big(\exp_{L_1}(t_0,V)\big)=\exp_{L_1}(t_0,V\uppri)$. By a standard argument on normal geodesics, $V\uppri$ is independent of the choice of $t_0\in(0,T)$. In other words, the map $V\mapsto V\uppri$ defines an involution on $\text{NS}(L_1)$ induced by $\sigma_{L_2}$, which we also denote by $\tilde{\sigma}_{L_2}$, and it satisfies
    \begin{equation*}
        \exp_{L_1}(t,\tilde{\sigma}_{L_2}(V))=\sigma_{L_2}\big(\exp_{L_1}(t,V)\big),
    \end{equation*}
    for any $t\in(0,T)$ and $V\in\text{NS}(L_1)$. It is worth noting that there is no ambiguity between the involution $\tilde{\sigma}_{L_2}$ on $\text{NS}(L_1)$ and the natural involution $\tilde{\sigma}_{L_2}$ on $\text{NS}(L_2)$, since they act on distinct domains.
    \par 
    We now turn to the relationships among the cylindrical, toric, and Klein-bottled types.

    \subsection{Relationships among transnormal systems of various types}
    Now, we establish some relationships among transnormal systems of various types through foil-translations and foil-reflections. 
    \begin{prop}\label{prop: non-essential to essential}
        Suppose $\F$ is an embedded transnormal system of codimension one on a complete Riemannian manifold $M$. Then, 
        \begin{enumerate}[(i)]
            \item $\F$ is of toric type, if and only if there exists another complete Riemannian manifold $\tilde{M}$ equipped with a transnormal system $\tilde{\mathcal{F}}$ of cylindrical type, and an isometric foil-translation $\tau$ with respect to $\tilde{\mathcal{F}}$ such that $\tilde{M}\big/\langle\tau\rangle=M$.
            \item $\F$ is of twisted-cylindrical type, if and only if there exists another complete Riemannian manifold $\tilde{M}$ equipped with a transnormal system $\tilde{\mathcal{F}}$ of cylindrical type, along with an isometric foil-reflection $\sigma$ with respect to $\tilde{\mathcal{F}}$ that is a fixed-point-free involution on $\tilde{M}$, such that $\tilde{M}\big/\langle\sigma\rangle=M$.
            \item $\F$ is of Klein-bottled type, if and only if there exists another complete Riemannian manifold $\tilde{M}$ equipped with a transnormal system $\tilde{\mathcal{F}}$ of cylindrical type, along with two isometric foil-reflections $\sigma_1$ and $\sigma_2$ with distinct mirror foils with respect to $\tilde{\mathcal{F}}$ those both are fixed-point-free involutions on $\tilde{M}$, such that $\tilde{M}\big/\langle\sigma_1,\sigma_2\rangle=M$.
            \item $\F$ is of real-projective type, if and only if there exists another complete Riemannian manifold $\tilde{M}$ equipped with a transnormal system $\tilde{\mathcal{F}}$ of spherical type, along with an isometric foil-reflection $\sigma$ with respect to $\tilde{\mathcal{F}}$ that is a fixed-point-free involution on $\tilde{M}$, such that $\tilde{M}\big/\langle\sigma\rangle=M$.
        \end{enumerate}
    \end{prop}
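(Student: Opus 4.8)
All four statements have the same architecture: a transnormal system with nontrivial periodicity or mirror structure is exhibited as the quotient of a simpler system (cylindrical in (i)--(iii), spherical in (iv)) by the isometry group generated by the prescribed foil-translations and foil-reflections. My plan is to prove each equivalence by \emph{unfolding} the foil space along its periodicity/reflection structure, pulling back the Riemannian metric and the transnormal system along the unfolding, and then reading off the type from the foil space of the quotient. Throughout I write $\rho\colon M\to Q:=M/\F$ for the quotient map and use that $Q$ is one of $\mR$, $[0,+\infty)$, $[0,T]$, $\mS^1$, together with the injectivity-radius structure of Lemma \ref{lemma: property of inj radius in special cases} and the foil count of Lemma \ref{lemma: SR-foil and S-foil at most 2}.

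For the ``only if'' direction of (i), $Q\cong\mS^1$ is covered by $\mR$, and I take the pullback covering $\tilde M:=M\times_{\mS^1}\mR$; this is connected because $\rho_*\colon\pi_1(M)\to\pi_1(\mS^1)$ is onto, which follows from the long exact sequence of the fiber bundle $L\to M\to\mS^1$ (regular foils being connected forces $\pi_0(L)=0$). Its deck group is $\mZ$, generated by a map $\tau$ covering translation by $2T$, which is an isometric foil-translation for the lifted system $\tilde{\mathcal F}$; since $\tilde Q\cong\mR$ and no singular foils appear, $\tilde{\mathcal F}$ is cylindrical. For (ii)--(iv) I instead reflect across each SR-foil $L$, gluing a mirror copy using the involution $\tilde{\sigma}_L\colon V\mapsto -V$ on $\nsb{L}$ from the preliminaries; one reflection in (ii) sends $Q=[0,+\infty)$ to $\mR$ (cylindrical), two reflections in (iii) about the two distinct SR-foils generate the infinite dihedral group $D_\infty$ and again give $\tilde Q\cong\mR$ (cylindrical), and in (iv) the single reflection across the SR-foil of a real-projective system produces a mirror image of its S-foil, yielding two S-foils at the ends of an interval of length $2T$, i.e. a spherical system. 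In each case the pulled-back metric makes $\pi\colon\tilde M\to M$ a local isometry, so the generating deck transformations are isometric foil-translations or foil-reflections in the sense of Definition \ref{def: foil-tran and foil-inv}. The ``if'' directions run this backwards: given the group action $G$ I check it is free and properly discontinuous (translations and the dihedral action on the foil space $\mR$ are proper, and the foil-reflections are fixed-point-free by hypothesis), conclude that $\tilde M/G$ is a complete Riemannian manifold, descend the $G$-invariant system, and identify the type by computing $\mR/G$ (resp. $[0,2T]/G$) and counting the resulting SR- and S-foils.

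The main obstacle is the smooth and metric regularity of the reflected unfolding across an SR-foil in (ii)--(iv). I must verify that gluing a reflected copy along the SR-foil $L$ yields a genuine smooth Riemannian manifold in which $L$ lifts to a \emph{two-sided} DR-foil (its mirror foil), and that the reflection $\sigma$ is a fixed-point-free involution. The decisive point is that single-sidedness of $L$ means $\nsb{L}$ is connected, equivalently that the orientation double cover $\tilde L\to L$ of the normal line bundle is nontrivial; the restriction of $\sigma$ to the mirror foil is exactly the deck transformation of this double cover, hence has no fixed points. This is precisely the property that fails for an S-foil and that forces the mirror foil of a foil-reflection to be a DR-foil. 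Smoothness across the fold follows because the normal exponential coordinates $(t,V)\mapsto\exp_{L}(t,V)$ identify a neighborhood of $L$ with a neighborhood of the zero section of the (now trivialized) pulled-back normal bundle, on which $t\mapsto -t$ acts smoothly and isometrically.

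Once this regularity is established, the remaining verifications are routine bookkeeping. In (iii) one computes that $\sigma_1\sigma_2$ covers translation by twice the separation of the two mirror foils, so the hypothesis of \emph{distinct} mirrors is exactly what makes the translation subgroup $\mZ\subset D_\infty$ nondegenerate and guarantees a compact direction (foil space $[0,T]$, Klein-bottled) rather than a collapse; the presence of the reflections themselves, rather than only their product, is what distinguishes the Klein-bottled quotient from a toric one. The type assignments in the ``if'' direction then follow by locating endpoints of the quotient foil space and invoking Lemma \ref{lemma: property of inj radius in special cases} to confirm that each endpoint foil is an SR- or S-foil as required, with the total count constrained by Lemma \ref{lemma: SR-foil and S-foil at most 2}.
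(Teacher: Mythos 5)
Your proposal is correct in substance and follows the same strategy as the paper's proof: both directions rest on the unfolding/quotient correspondence, with the cover built from the normal-exponential structure of the foils, the deck transformations identified as the prescribed isometric foil-translations and foil-reflections, and the type of the quotient read off from the quotient foil space together with Lemmas \ref{lemma: property of inj radius in special cases} and \ref{lemma: SR-foil and S-foil at most 2}. Two implementation choices differ from the paper, though they produce the same covers. For (i) you take the pullback of $\mR\to\mS^1$ along the foil-space projection and argue connectedness via the exact sequence of the fibration, whereas the paper's template realizes the cover directly as $\mR\times B(L)$ via the normal exponential map of a foil, with no fibration argument needed. For (iv) you unfold across the SR-foil, treating it uniformly with (ii) and (iii); the paper instead emphasizes that (iv) requires a different two-sheeted covering and builds it by gluing two copies of the closed normal disk bundle over the S-foil along their common boundary via $\tilde{\sigma}_{L_2}$. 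Both constructions yield the same double cover: yours makes smoothness transparent near the SR-foil (your fold-regularity discussion is exactly the right point, and matches the paper's use of the identity \myeqref{equ: normal geodesic is unique through SR-foil}), while the paper's makes the spherical structure of the lift immediate since the two S-foils appear as the zero sections of the two disk bundles.

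One point in your ``if'' direction for (iii) needs more care than your parenthetical provides. Freeness must be verified for \emph{every} nontrivial element of $G=\langle\sigma_1,\sigma_2\rangle\cong D_\infty$, not only for the generators: the hypothesis ``fixed-point-free'' applies literally only to $\sigma_1$ and $\sigma_2$, while $G$ contains infinitely many further reflections $(\sigma_2\circ\sigma_1)^k\circ\sigma_2$ whose mirror foils are translates of the original ones. The repair is short --- the translations $(\sigma_2\circ\sigma_1)^k$ act freely because they move every foil of $\tilde{\F}$ by $kt_0\neq 0$ in the foil space, and every reflection in $D_\infty$ is conjugate in $G$ to $\sigma_1$ or to $\sigma_2$, so fixed-point-freeness of the generators propagates to all of them --- but it is a genuine step rather than a restatement of the hypothesis; the paper devotes a separate remark after its proof to exactly this verification. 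With that supplement, your argument is complete.
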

    \begin{proof}
        We provide a proof of (iii), as the proofs of (i) and (ii) are similar and relatively simpler. 
        \par   
        Suppose $(M,\metr)$ admits a transnormal system $\{F_t\}_{t\in[0,\frac{t_0}{2}]}$ of Klein-bottled type induced by a transnormal function $f$. Denote $L_1=F_{0}$ and $L_2=F_{\frac{t_0}{2}}$ the SR-foils. We equip $\tilde{M}=\mR\times B(L_1)$ with a Riemannian metric to be determined. It follows that $\exp_{L_1}:\tilde{M}\to M$ is a covering map, and the function $f\circ \exp_{L_1}$ is a transnormal function on $(\tilde{M},\exp_{L_1}\upst\metr)$ inducing the transnormal system $\{\tilde{F}_t=\{t\}\times \text{NS}(L_1)\}_{t\in\mR}$. Define $\sigma_1$ and $\sigma_2$ on $\tilde{M}$ as $\sigma_1(t,V)=(-t,\tilde{\sigma}_{L_1}(V))$ and $\sigma_2(t,V)=(t_0-t,\tilde{\sigma}_{L_2}(V))$. Then, $\sigma_1$ and $\sigma_2$ are both isometric foil-reflections with distinct mirror foils with respect to $\tilde{\F}$ and fixed-point-free involutions on $\tilde{M}$. Obviously, $\exp_{L_1}=\exp_{L_1}\circ\sigma_1$. For $t\in(0,\frac{t_0}{2})$, we have 
        \begin{equation}\label{equ: normal geodesic is unique through SR-foil}
            \exp_{L_1}\circ\sigma_2(t,V)=\exp_{L_1}(t_0-t,\tilde{\sigma}_{L_2}(V))=\sigma_{L_2}\big(\exp_{L_1}(t_0-t,V)\big)=\exp_{L_1}(t,V),
        \end{equation}
        where the last equality follows from the uniqueness of the normal geodesic through $\exp_{L_1}(\frac{t_0}{2},V)$. A straightforward discussion of normal geodesics suggests that \myeqref{equ: normal geodesic is unique through SR-foil} holds for any $(t,V)\in\tilde{M}$. Taking the injectivity radius $T=\frac{t_0}{2}$ of $L_1$ and $L_2$ into consideration, we obtain $\tilde{M}\big/\langle\sigma_1,\sigma_2\rangle=M$. 
        \par    
        Conversely, assume $\tilde{\mathcal{F}}=\{\tilde{F}_t\}_{t\in\mR}$ is a transnormal system of cylindrical type, and $\sigma_1(\tilde{F}_t)=\tilde{F}_{-t}$, $\sigma_2(\tilde{F}_t)=\tilde{F}_{t_0-t}$ for $t_0>0$. Then, $G:=\langle\sigma_1,\sigma_2\rangle$ is a discrete group acting freely and isometrically on $\tilde{M}$. It follows that the quotient manifold $M=\tilde{M}\big/\langle\tau\rangle$ inherits a Riemannian metric $\metr$ that is locally isometric to $\tilde{\metr}$ via the natural projection $\pi:\tilde{M}\to M$. We obtain 
        \[
            \pi(\tilde{F}_t)=\pi\left(\bigcup_{k\in\mZ}\left(\tilde{F}_{kt_0+t}\cup\tilde{F}_{kt_0-t}\right)\right). 
        \]
        Thus, for any $t\in[0,\frac{t_0}{2}]$, we donote $F_{t}=\pi(\tilde{F}_t)$, and the collection $\mathcal{F}=\{F_{t}\}_{t\in[0,\frac{t_0}{2}]}$ forms a transnormal system on $M$ for the reason that the geodesics in $M$ locally retain the same properties as those in $\tilde{M}$. For any $t\in(0,\frac{t_0}{2})$ and sufficiently small $s>0$, the restriction of $\pi$ to $s$-neighborhood $\mathcal{N}_s(\tilde{F}_{t})$ of $\tilde{F}_{t}$ is an isometry, implying that all the foils except $F_0$ and $F_{\frac{t_0}{2}}$ in $\mathcal{F}$ are embedded DR-foils. The foil $F_0=\pi(\tilde{F}_0)$ has the same dimension as $\tilde{F}_0$, and its $s$-tube $\pi(\tilde{F}_{s}\cup\tilde{F}_{-s})=F_s$ is connected, which implies that $F_0$ is an SR-foil, and so is $F_{\frac{t_0}{2}}$. Hence, $\F$ is of Klein-bottled type. 
        \par   
        For (iv), we first note that it requires a different two-sheeted covering of $M$, which is described as follows. Let $M_1$ be a subset of normal bundle $\text{N}(L_1)$ defined as 
        \[
            M_1:=\left\{V\in \text{N}(L_1):\ |V|\le T\right\},
        \]
        and $M_2$ is a copy of $M_1$. Then the interior of $M_1$ is diffeomorphic to $\mathcal{N}_T(L_1)\subset M$, while the boundary is diffeomorphic to $\{T\}\times \text{NS}(L_1)$. Glue $M_1$ and $M_2$ along their boundary by attaching map 
        \[
            \phi:\partial M_1\simeq\{T\}\times \text{NS}(L_1)\to\partial M_2\simeq\{T\}\times \text{NS}(L_1),\quad \phi\left(T,V\right)=\left(T,\tilde{\sigma}_{L_2}(V)\right). 
        \]
        Noting that \myeqref{equ: normal geodesic is unique through SR-foil} also holds for real-projective type and letting $\tilde{\phi}(t,V):=\left(t_0-t,\tilde{\sigma}_2(V)\right)$, we have $\exp_{L_1}(t,V)=\exp_{L_1}\circ\tilde{\phi}(t,V)$ for any $t\in\mR$ and $V\in\nsb{L_1}$, which implies that the obtained manifold, denoted by $\tilde{M}$, is smooth and is a two-sheeted covering of $M$. The rest of the proof proceeds in the same manner as in (iii).   
    \end{proof}
    \begin{rmk}
        For completeness, we elaborate on a step in the proof above that was stated without full detail, specifically the fact that in (iii) of Proposition \ref{prop: non-essential to essential}, the group $G$ generated by $\sigma_1$ and $\sigma_2$ acts freely on $\tilde{M}$.  Firstly, $G$ is rewritten as
        \begin{align*}
            G&=\langle\sigma_1,\sigma_2\rangle\\
            &=\left\{(\sigma_2\circ\sigma_1)^k,(\sigma_2\circ\sigma_1)^k\circ\sigma_2:k\in\mathbb{Z}\right\}.
        \end{align*}
        Note that  
        \[
            (\sigma_2\circ\sigma_1)^k(F_t)=F_{t+kt_0},
        \]
        which implies that $(\sigma_2\circ\sigma_1)^k$ has no fixed point unless $k=0$. As to $(\sigma_2\circ\sigma_1)^k\circ\sigma_2\in G$, it has no fixed point because we have 
        \begin{align*}
            &\left(\sigma_2\circ\sigma_1\right)^{2N}\circ\sigma_2\,(x)=x \\
            \Leftrightarrow\ & \sigma_2\left(\left(\sigma_1\circ\sigma_2\right)^{N}(x)\right)=\left(\sigma_1\circ\sigma_2\right)^{N}(x),
        \end{align*}
        and
        \begin{align*}
            &\left(\sigma_2\circ\sigma_1\right)^{2N+1}\circ\sigma_2\,(x)=x \\
            \Leftrightarrow\ & \sigma_1\left(\left(\sigma_2\circ\sigma_1\right)^{N}\circ\sigma_2\,(x)\right)=\left(\sigma_2\circ\sigma_1\right)^{N}\circ\sigma_2\,(x), 
        \end{align*}
        for any $N\in\mZ$. 
    \end{rmk}
    \par    
    \begin{coro}\label{coro: Klein-bottled to toric}
        A complete Riemannian manifold $M$ admits a transnormal system $\F$ of Klein-bottled type, if and only if there exists another complete Riemannian manifold $\bar{M}$ equipped with a transnormal system $\bar{\F}$ of toric type, along with an isometric foil-reflection $\bar{\sigma}$ with respect to $\bar{\F}$ that is a fixed-point-free involution on $\bar{M}$, such that $\bar{M}\big/\langle\bar{\sigma}\rangle=M$. 
    \end{coro}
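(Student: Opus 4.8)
The plan is to deduce the corollary from parts (i) and (iii) of Proposition \ref{prop: non-essential to essential} together with the elementary structure of the infinite dihedral group. The bridge is the following observation about the two foil-reflections $\sigma_1,\sigma_2$ appearing in part (iii): the composition $\tau:=\sigma_2\circ\sigma_1$ is an isometric foil-translation, since $\tau(\tilde{F}_t)=\tilde{F}_{t+t_0}$ (this is exactly the computation $(\sigma_2\circ\sigma_1)^k(\tilde{F}_t)=\tilde{F}_{t+kt_0}$ from the remark following that proposition), and $G:=\langle\sigma_1,\sigma_2\rangle$ is the infinite dihedral group with $\langle\tau\rangle$ as an index-two normal subgroup, the quotient $G/\langle\tau\rangle$ being generated by the class of $\sigma_1$. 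Consequently $\tilde{M}/G=(\tilde{M}/\langle\tau\rangle)/\langle\bar\sigma\rangle$, where $\bar\sigma$ is the involution that $\sigma_1$ induces on $\tilde{M}/\langle\tau\rangle$. Both implications of the corollary will be read off from this identity.

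For the forward implication I would take a Klein-bottled $\F$ on $M$ and apply Proposition \ref{prop: non-essential to essential}(iii) to obtain a cylindrical $(\tilde{M},\tilde{\F})$ with fixed-point-free isometric foil-reflections $\sigma_1,\sigma_2$ of distinct mirror foils and $\tilde{M}/\langle\sigma_1,\sigma_2\rangle=M$. Setting $\tau=\sigma_2\circ\sigma_1$ and $\bar{M}=\tilde{M}/\langle\tau\rangle$, Proposition \ref{prop: non-essential to essential}(i) endows $\bar{M}$ with a toric transnormal system $\bar{\F}$, all of whose foils are DR-foils. The relation $\sigma_1\tau\sigma_1^{-1}=\tau^{-1}$ shows $\sigma_1$ normalizes $\langle\tau\rangle$, so it descends to an isometry $\bar\sigma$ of $\bar{M}$; because $\sigma_1(\tilde{F}_t)=\tilde{F}_{-t}$, the map $\bar\sigma$ reflects $\bar{\F}$ across the DR-foil $\pi(\tilde{F}_0)$ and hence is an isometric foil-reflection, nontrivial since $\sigma_1\notin\langle\tau\rangle$ and an involution since $\sigma_1^2=\mathrm{id}$. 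That $\bar\sigma$ is fixed-point-free is the only genuinely new point: a fixed point of $\bar\sigma$ would lift to some $x$ with $\sigma_1(x)=\tau^k(x)$, i.e. $(\tau^{-k}\sigma_1)(x)=x$, but $\tau^{-k}\sigma_1$ is a reflection-type (hence non-identity) element of $G$, contradicting the freeness of the $G$-action established in the remark after Proposition \ref{prop: non-essential to essential}. Finally $\bar{M}/\langle\bar\sigma\rangle=\tilde{M}/G=M$.

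For the converse I would begin with a toric $\bar{\F}$ on $\bar{M}$ and a fixed-point-free isometric foil-reflection $\bar\sigma$ with $\bar{M}/\langle\bar\sigma\rangle=M$. Proposition \ref{prop: non-essential to essential}(i) supplies a cylindrical $(\tilde{M},\tilde{\F})$ and an isometric foil-translation $\tau$ with covering $\pi:\tilde{M}\to\bar{M}=\tilde{M}/\langle\tau\rangle$. After normalizing the parameter so that the mirror foil of $\bar\sigma$ is $\pi(\tilde{F}_0)$, I would lift $\bar\sigma$ to the reflection $\sigma_1$ of $\tilde{M}$ across $\tilde{F}_0$, built along normal geodesics exactly as in the proof of Proposition \ref{prop: non-essential to essential}(iii); this $\sigma_1$ is an isometric foil-reflection with $\pi\circ\sigma_1=\bar\sigma\circ\pi$, and $\sigma_1^2$ is a deck transformation fixing every foil, hence $\sigma_1^2=\mathrm{id}$. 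Putting $\sigma_2:=\tau\circ\sigma_1$ yields a second isometric foil-reflection with $\sigma_2(\tilde{F}_t)=\tilde{F}_{t_0-t}$, whose mirror foil $\tilde{F}_{t_0/2}$ differs from that of $\sigma_1$; both are fixed-point-free, since a fixed point of $\sigma_1$, or of $\sigma_2$ (equivalently $\sigma_1(x)=\tau^{-1}(x)$), projects to a fixed point of $\bar\sigma$. As $\langle\sigma_1,\sigma_2\rangle=\langle\sigma_1,\tau\rangle$ and $\tilde{M}/\langle\sigma_1,\tau\rangle=(\tilde{M}/\langle\tau\rangle)/\langle\bar\sigma\rangle=M$, Proposition \ref{prop: non-essential to essential}(iii) applies and certifies that $M$ is of Klein-bottled type.

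I expect the main obstacle to be the lifting step in the converse: producing $\sigma_1$ as a genuine isometric foil-reflection rather than an arbitrary covering lift, and confirming it is an honest involution with the correct mirror foil. This is resolved by constructing $\sigma_1$ geometrically from the normal exponential map of $\tilde{F}_0$, as in part (iii), and by using that the deck group $\langle\tau\rangle$ contains no nontrivial element fixing all foils, which forces $\sigma_1^2=\mathrm{id}$.
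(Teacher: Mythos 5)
Your argument is correct and follows exactly the route the paper takes: the paper's proof is the single observation that $\tau=\sigma_2\circ\sigma_1$ from Proposition \ref{prop: non-essential to essential}(iii) is an isometric foil-translation, so the intermediate quotient $\tilde{M}/\langle\tau\rangle$ is the desired toric $\bar{M}$ with $\sigma_1$ descending to $\bar\sigma$. You simply make explicit the dihedral-group bookkeeping, the fixed-point-freeness of $\bar\sigma$, and the lifting in the converse, all of which the paper leaves to the reader.
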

    \begin{proof}
        This conclusion immediately follows from the fact that the composition $\tau:=\sigma_2\circ\sigma_1$ of $\sigma_1$ and $\sigma_2$ in (iii) of Proposition \ref{prop: non-essential to essential} is exactly an isometric foil-translation with respect to the transnormal system of cylindrical type. 
    \end{proof}
    \par
    Since a Riemannian manifold admitting a transnormal system of cylindrical type has a particularly simple topology, namely, it is diffeomorphic to $\mR\times F$ (see \cite{li2024} for details), Proposition \ref{prop: non-essential to essential} (i) illustrates that a Riemannian manifold $M$ admitting a transnormal system $\F$ of toric type is diffeomorphic to the mapping torus of a foil, i.e.,
    \begin{equation*} 
        M\simeq \frac{\mR\times F}{(t,x)\sim(t+1,\tilde{\tau}(x))}\simeq \frac{[0,1]\times F}{(0,x)\sim(1,\tilde{\tau}(x))}=: \mathcal{M}_{\tilde{\tau}},
    \end{equation*}
    where $F$ is diffeomorphic to any foil in $\F$ and $\tilde{\tau}$ is a diffeomorphism of $F$. Meanwhile, $M$ is also regarded as an $F$-bundle over the circle induced by the diffeomorphism $\tilde{\tau}$.
    \par 
    \begin{ex}\label{example: case 1.2}
        Suppose $M=\mR^2/\Gamma(c)$ is a flat torus, where $\Gamma(c)=\{k_1(1,c)+k_2(0,1)\in\mR^2:k_1,k_2\in\mathbb{Z}\}$ is a lattice with $c\in[0,1)$. Denote $\pi:\mR^2\to\mR^2/\Gamma(c)$ the natural projection. For any $c\in[0,1)$, the set $F_{[x]}=\pi(\{x\}\times\mR)$ is clearly an embedded geodesic circle in $M$. The collection $\mathcal{F}=\{F_{[x]}\}_{[x]\in\mR/\mathbb{Z}}$ then defines a transnormal system of toric type on $M$. If $\gamma_{y_0}$ denotes the $y_0$-line in $\mR^2$, then $\pi\circ\gamma_{y_0}$ is a normal geodesic for $(M,\mathcal{F})$. The associated $(\tilde{M},\tilde{\F})$ in Proposition \ref{prop: non-essential to essential} (i) is given by 
        \[
            \tilde{M}=\mR\times \left(\mR/\mathbb{Z}\right),\quad \tilde{\mathcal{F}}=\{\tilde{F}_t=\{t\}\times \mR/\mathbb{Z}\}_{t\in\mR},
        \]
        and the action $\tau$ is given by $\tau(x,[y])=(x+1,[y+c])$. 
        \begin{figure}[htbp]
            \centering
            \includegraphics[width=0.6\linewidth]{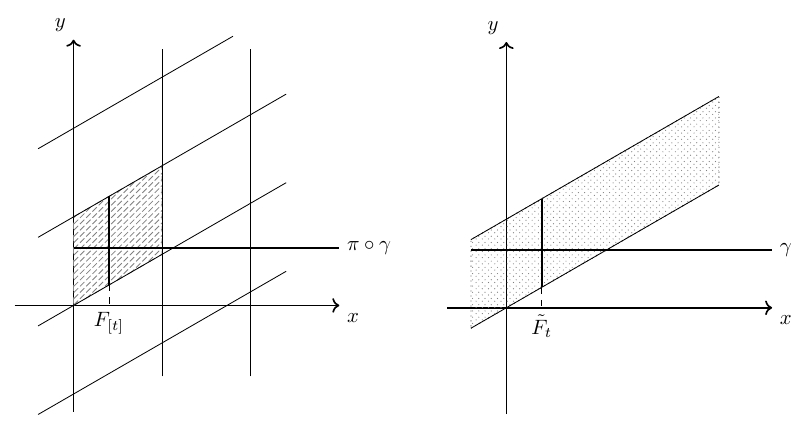}
            \caption{$\mathcal{F}$ of toric type and its covering.}
            \label{figure: torus and type R-C}
        \end{figure}
        The choice of $c$ influences the geometry of $M$. The normal geodesic $\pi\circ\gamma_{y_0}$ is dense in $M$ when $c$ is irrational, whereas it is closed when $c$ is rational.
    \end{ex}
    \par 
    Similarly, from Proposition \ref{prop: non-essential to essential} (iii), we obtain that a Riemannian manifold $M$ admitting a transnormal system $\F$ of Klein-bottled type is diffeomorphic to 
    \begin{equation}\label{equ: def of K_sigma12}
        \begin{aligned}
            \mathcal{K}_{\tilde{\sigma}_1;\tilde{\sigma}_2}:=&\frac{\mR\times F}{(t,x)\sim(-t,\tilde{\sigma}_1(x)),(t,x)\sim(1-t,\tilde{\sigma}_2(x))}\\
            \simeq &\frac{[0,\frac12]\times F}{(0,x)\sim(0,\tilde{\sigma}_1(x)),(\frac12,x)\sim(\frac12,\tilde{\sigma}_2(x))},
        \end{aligned}
    \end{equation}
    where $F$ is diffeomorphic to any DR-foil in $\F$ and $\tilde{\sigma}_1,\tilde{\sigma}_2$ are both fixed-point-free involution of $F$.

    \subsection{Lifting of transnormal systems along covering maps}
    Let $\F$ be a transnormal system on a Riemannian manifold $(M,\metr)$ induced by a transnormal function $f$, and let $\hat{\pi}:\hat{M}\to M$ be a regular Riemannian covering map with covering transformation group $G=G(\hat{M},\hat{\pi})$. The function $f\circ\hat{\pi}$ induces a transnormal system $\hat{\F}$ on $\hat{M}$. For any $g\in G$ and $\hat{F}\in\hat{\mathcal{F}}$, $g(\hat{F})$ is another connected component of $f\circ\hat{\pi}$, thus also a foil in $\hat{\mathcal{F}}$. In other words, each transformation in $G$ is a foil-preserving isometry of $(\hat{M},\hat{\mathcal{F}})$. Consider the subset 
    \begin{equation}\label{equ: normal subgroup fixing foils}
        G_0=G_0(\hat{M},\hat{\pi}):=\left\{g\in G: g(\hat{F})=\hat{F},\forall\,\hat{F}\in\hat{\mathcal{F}}\right\},
    \end{equation}
    which is indeed a normal subgroup of $G$. Then, it follows that the projection $p:\hat{M}\to\tilde{M}=\hat{M}\big/G_0$ is a regular covering map, and $\tilde{\pi}:\tilde{M}\to M$ is also a regular covering map whose covering transformation group $G(\tilde{M},\tilde{\pi})$ is isomorphic to the quotient group $G\big/G_0$. Consequently, $f\circ \tilde{\pi}$ induces a transnormal system $\tilde{\mathcal{F}}$ on $\tilde{M}$ such that $p(\hat{F})\in\tilde{\mathcal{F}}$ for any $\hat{F}\in\hat{\mathcal{F}}$, and $\tilde{\pi}(\tilde{F})\in\mathcal{F}$ for any $\tilde{F}\in\tilde{\mathcal{F}}$. Precisely, we have 
    \begin{prop}\label{prop: decom of an arbitrary covering}
        A regular covering map $\hat{\pi}:(\hat{M},\hat{\mathcal{F}})\to (M,\mathcal{F})$ can be uniquely decomposed into a regular covering sequence 
        \begin{equation}\label{equ: decom of an arbitrary covering}
            (\hat{M},\hat{\mathcal{F}})\xrightarrow{p}(\tilde{M},\tilde{\mathcal{F}})\xrightarrow{\tilde{\pi}}(M,\mathcal{F})
        \end{equation}
        satisfying the following conditions:
        \begin{enumerate}[(C1)]
            \item for every $g\in G(\hat{M},p)$ and $\hat{F}\in\hat{\F}$, it holds that $g(\hat{F})=\hat{F}$;
            \item if $\phi\in G(\tilde{M},\tilde{\pi})$ satisfies $\phi(\tilde{F})=\tilde{F}$ for any $\tilde{F}\in\tilde{\F}$, then $\phi$ is the identity. 
        \end{enumerate}
    \end{prop}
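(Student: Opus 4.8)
The plan is to take the decomposition already assembled just before the statement, namely $\tilde M=\hat M/G_0$ with $p$ the quotient projection and $\tilde\pi$ the induced covering, verify (C1) and (C2) for it, and then establish uniqueness. Condition (C1) should be essentially free: since $p$ is the quotient of $\hat M$ by the free properly discontinuous action of $G_0$, its covering transformation group $G(\hat M,p)$ equals $G_0$, and every element of $G_0$ fixes every foil $\hat F$ by the very definition \myeqref{equ: normal subgroup fixing foils}. So the entire weight of the argument falls on (C2) and on uniqueness, and both will rest on a single structural observation about how foils behave under an intermediate covering.

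That observation, which I would isolate as a lemma, is: \emph{if $q\colon A\to B$ is a regular covering whose deck group fixes each foil of the transnormal system on $A$, then for every foil $F$ downstairs the preimage $q^{-1}(F)$ is a single foil upstairs.} To prove it, note that $F$ is open and closed in its level set of the transnormal function on $B$, so $q^{-1}(F)$ is open and closed in the corresponding level set upstairs and is therefore a disjoint union of foils; these foils are precisely the connected components of $q^{-1}(F)$. Because $q$ is regular and $F$ is connected, the deck group acts transitively on these components; but by hypothesis the deck group fixes each foil, hence each component, so there can be only one component. This yields the single-foil-preimage property, and I expect the transitivity of the deck action on the components of the preimage of a connected set to be the one point needing care.

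Granting the lemma, (C2) would follow quickly. Given $\phi\in G(\tilde M,\tilde\pi)$ fixing every $\tilde F\in\tilde\F$, choose $g\in G$ representing $\phi$ under $G\to G/G_0\cong G(\tilde M,\tilde\pi)$, so that $p\circ g=\phi\circ p$. For any $\hat F\in\hat\F$ set $\tilde F=p(\hat F)\in\tilde\F$; then $p(g(\hat F))=\phi(\tilde F)=\tilde F=p(\hat F)$, so both $g(\hat F)$ and $\hat F$ lie in $p^{-1}(\tilde F)$, which the lemma (applied to $q=p$, whose deck group is $G_0$) says is a single foil. Since $g\in G$ permutes foils, $g(\hat F)$ is itself a foil contained in that single foil $\hat F$, whence $g(\hat F)=\hat F$ for every $\hat F$; thus $g\in G_0$ and $\phi$ is the identity.

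For uniqueness, suppose $(\hat M,\hat\F)\xrightarrow{p'}(\tilde M',\tilde\F')\xrightarrow{\tilde\pi'}(M,\F)$ is another decomposition satisfying (C1) and (C2), and let $H:=G(\hat M,p')\trianglelefteq G$. Condition (C1) for $p'$ gives $H\subseteq G_0$. Conversely, for any $g\in G_0$ the lemma applies to $p'$ (its deck group $H$ fixes every foil since $H\subseteq G_0$), so $p'^{-1}(\tilde F')$ is a single foil for each $\tilde F'$, and the computation above shows the class $gH$ fixes every foil of $\tilde\F'$; by (C2) this forces $gH$ to be trivial, i.e. $g\in H$. Hence $H=G_0$, and by the Galois correspondence for regular coverings of $(M,\F)$ the intermediate covering $p'$ is canonically covering-isomorphic to $p$; since $\tilde\F$ and $\tilde\F'$ are both induced by $f$ through the respective projections to $M$, this isomorphism carries $\tilde\F$ to $\tilde\F'$, giving the asserted uniqueness.
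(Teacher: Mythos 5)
Your proposal is correct and follows essentially the same route as the paper: it uses the same decomposition $\tilde M=\hat M/G_0$ and proves uniqueness by showing that any other admissible intermediate covering has deck group exactly $G_0$, with (C1) giving one inclusion and (C2) the other. The only difference is presentational: you isolate as an explicit lemma the fact that a regular covering whose deck group fixes every foil pulls each foil back to a single foil, a point the paper uses implicitly both when asserting that its construction satisfies (C2) and when descending a transformation in $G_0$ to a deck transformation of the second covering.
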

    \begin{proof}
        The existence of the decomposition has already been explained in the previous discussion; now, it suffices to show the uniqueness. Assume $(\hat{M},\hat{\F})\xrightarrow{p\uppri}(\tilde{M}\uppri,\tilde{\F}\uppri)\xrightarrow{\tilde{\pi}\uppri}(M,\F)$ is another regular covering sequence that satisfies the conditions. By condition (C1), $G(\hat{M},p\uppri)$ is a subgroup of $G_0$ in \myeqref{equ: normal subgroup fixing foils}. We claim that $G(\hat{M},p\uppri)=G_0$. Otherwise, suppose $\phi\in G_0\big\backslash G(\hat{M},p\uppri)$. Since $\tilde{\pi}\uppri\circ p\uppri\circ \phi=\tilde{\pi}\uppri\circ p\uppri$, there exists some $\psi_x\in G(\tilde{M}\uppri,\tilde{\pi}\uppri)$ for any $x\in\hat{M}$ such that $\psi_x\circ p\uppri(x)=p\uppri\circ \phi(x)$. Due to the discontinuity of $G(\tilde{M}\uppri,\tilde{\pi}\uppri)$, we can derive that $\psi:=\psi_x$ is independent of $x\in\hat{M}$, and thus $\psi\circ p\uppri=p\uppri\circ \phi$. Because $\phi\notin G(\hat{M},p\uppri)$, the transformation $\psi$ is not identity. However, $\phi(\hat{F})=\hat{F}$ means $p\uppri(\hat{F})$ and $p\uppri\circ \phi(\hat{F})$ are the same in $\tilde{\F}\uppri$, and thus $\psi$ fixes every foil in $\tilde{\F}\uppri$, which is a contradiction to the condition (C2). This proves the claim. It immediately follows that $(\tilde{M}\uppri,\tilde{\F}\uppri)$ and $(\tilde{M},\tilde{\F})$ coincide, indicating that these two regular covering sequences are indeed the same. 
    \end{proof}
    \par 
    Moreover, the covering map $p$ in \myeqref{equ: decom of an arbitrary covering} preserves the types of foils. We immediately have
    \begin{coro}
        In sequence \myeqref{equ: decom of an arbitrary covering}, the transnormal systems $\hat{\F}$ and $\tilde{\F}$ are of the same type. 
    \end{coro}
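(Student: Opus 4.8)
The plan is to reduce the assertion to two pieces of data that, by Table~\ref{table: seven types}, completely determine the type of an embedded transnormal system of codimension one: the homeomorphism type of the foil space (one of $\mR$, $[0,+\infty)$, $[0,T]$, or $\mS^1$) together with the types (SR or S) of the foils lying over its endpoints. Concretely, I would show that $p$ induces a homeomorphism $\bar p\colon \hat M/\hat\F\to\tilde M/\tilde\F$ of foil spaces and that $p$ carries SR-foils to SR-foils and S-foils to S-foils; once both are established, the coincidence of types is read off directly from the table.

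First I would construct $\bar p$. Since $\hat\pi=\tilde\pi\circ p$, the transnormal function on $\hat M$ is $f\circ\hat\pi=(f\circ\tilde\pi)\circ p$, so every foil of $\hat\F$ is a connected component of the $p$-preimage of a foil of $\tilde\F$; restricting the covering $p$ to such a component gives a covering onto a foil $\tilde F=p(\hat F)$ of $\tilde\F$, whence $\bar p([\hat F]):=[p(\hat F)]$ is well defined and surjective. For injectivity I would invoke condition (C1): if $p(\hat F_1)=p(\hat F_2)=\tilde F$, then $\hat F_1,\hat F_2$ are components of $p^{-1}(\tilde F)$, and since $p$ is a regular covering whose deck group $G(\hat M,p)$ acts transitively on those components, $\hat F_2=g(\hat F_1)$ for some $g\in G(\hat M,p)$; by (C1) this $g$ fixes every foil, so $\hat F_1=\hat F_2$. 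As $p$ is continuous and open and carries the foil partition of $\hat M$ bijectively onto that of $\tilde M$, the induced $\bar p$ is a continuous open bijection, hence a homeomorphism; in particular it matches endpoints to endpoints and preserves compactness of the foil space, that is, the finiteness of the diameter.

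Next I would track foil types along $p$. Because $p$ restricts to a covering map on each foil, it preserves dimension, so regular foils (codimension one) correspond to regular foils and S-foils (higher codimension) correspond to S-foils. The crucial structural input is the closing remark of the Preliminaries: the SR- and S-foils are exactly the foils lying over the endpoints of the foil space. Since $\bar p$ matches endpoints, the special foils of $\hat\F$ correspond bijectively to those of $\tilde\F$, and within this correspondence the distinction between S (higher codimension) and SR (codimension one) is purely dimensional and therefore respected by $p$. Hence $\hat F$ is an SR-foil iff $p(\hat F)$ is, and likewise for S-foils, so $N_{\text{SR}}$ and $N_{\text{S}}$ agree for $\hat\F$ and $\tilde\F$. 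Combined with the common foil-space shape, Table~\ref{table: seven types} forces $\hat\F$ and $\tilde\F$ to be of the same type.

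The step I expect to be the genuine obstacle is precisely the preservation of the SR/DR dichotomy, since a covering map need not preserve one-sidedness: the pullback of the non-orientable normal line bundle of an SR-foil could become orientable, turning it into a DR-foil, so an argument through the normal sphere bundle $\nsb{\hat F}$ would not close. The way around this is to never compare normal bundles directly, but to route the SR/DR distinction through the position of a regular foil in the foil space, interior (DR) versus endpoint (SR), which $\bar p$ preserves, and then to separate SR from S by dimension alone. The remaining care is bookkeeping: verifying that $\bar p$ is genuinely open, so that it is a homeomorphism rather than merely a continuous bijection of $1$-dimensional quotients, and confirming that the injectivity argument really uses (C1) together with the regularity of $p$.
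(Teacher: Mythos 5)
Your proof is correct and follows the argument the paper intends: the paper dismisses this corollary as immediate from the (unproved) assertion that $p$ preserves foil types, and your write-up supplies exactly the missing details --- the foil-space homeomorphism $\bar p$ (with injectivity from (C1) plus regularity of $p$), the dimension argument for S-foils, and the detection of the SR/DR dichotomy by position in the foil space rather than by the normal sphere bundle, which correctly sidesteps the one obstacle a naive covering argument would hit. Nothing further is needed.
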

    \par 
    The following results will show that, for certain types of transnormal systems, the decomposition in sequence \myeqref{equ: decom of an arbitrary covering} is partially degenerate.
    \begin{prop}\label{prop: lift of essential is essential}
        Suppose $\hat{\pi}:(\hat{M},\hat{\F})\to(M,\F)$ is a regular covering map, and consider the regular covering sequence \myeqref{equ: decom of an arbitrary covering} derived from $\hat{\pi}$. If $\F$ is of cylindrical, planar or spherical type, then the covering map $\tilde{\pi}$ is trivial; equivalently, we have $p=\hat{\pi}$. 
    \end{prop}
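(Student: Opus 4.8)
The plan is to recast the conclusion ``$\tilde\pi$ is trivial'' in group-theoretic terms and then to reduce everything to a statement about the induced map on foil spaces. Since $\tilde M=\hat M\big/G_0$ and $G(\tilde M,\tilde\pi)\cong G\big/G_0$, where $G:=G(\hat M,\hat\pi)$ and $G_0$ is the normal subgroup of \myeqref{equ: normal subgroup fixing foils}, the map $\tilde\pi$ is trivial (equivalently $p=\hat\pi$) exactly when $G=G_0$. So the goal is to show that \emph{every} deck transformation $g\in G$ satisfies $g(\hat F)=\hat F$ for all $\hat F\in\hat\F$. To this end I would study the map $\bar\pi:\hat M\big/\hat\F\to M\big/\F$ induced by $\hat\pi$ on foil spaces (well defined because $\hat\pi$ carries foils to foils), and prove it is a \emph{homeomorphism}. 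Granting this, the conclusion is immediate: for $g\in G$ we have $\hat\pi\circ g=\hat\pi$, so $g$ permutes the foils lying over any fixed $F\in\F$; injectivity of $\bar\pi$ says there is exactly one such foil $\hat F$, whence $\hat\pi^{-1}(F)=\hat F$ and $g(\hat F)=\hat F$.

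The first half of the homeomorphism claim concerns the interiors. For each of the three listed types, Lemma~\ref{lemma: SR-foil and S-foil at most 2} together with the normal exponential map shows that the regular part $M^{\mathrm{reg}}$ (the union of all DR-foils) is diffeomorphic to $I\times F$, where $I=\mathrm{int}(M\big/\F)$ is one of $\mR$ (cylindrical), $(0,+\infty)$ (planar), or $(0,T)$ (spherical), and $F$ is a regular foil, with the foils being precisely the slices $\{t\}\times F$. Because $I$ is simply connected, every connected covering of $I\times F$ has the form $I\times\hat F$ with $\hat F\to F$ a covering, so each connected component of $\hat\pi^{-1}(M^{\mathrm{reg}})$ contributes a single interval to $\mathrm{int}(\hat M\big/\hat\F)$ that maps homeomorphically onto $I$. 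Since $\hat M\big/\hat\F$ is a connected $1$-manifold with boundary, namely one of $\mR,[0,+\infty),[0,1],\mS^1$, its interior is connected, which forces a single such component and hence makes $\bar\pi$ restrict to a homeomorphism $\mathrm{int}(\hat M\big/\hat\F)\to I$.

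The second half handles the endpoints and excludes the wrong homeomorphism types of $\hat M\big/\hat\F$. Here I would use that endpoints of a foil space correspond exactly to SR-foils or S-foils; that $\hat\pi$ preserves codimension and therefore sends S-foils to S-foils and cannot produce an SR-foil upstairs (a codimension-one SR-foil would have to map to a higher-codimension S-foil, which is impossible); and crucially that $\hat\pi$ is surjective, so every S-foil of $\F$ has nonempty preimage, which must appear as an endpoint of $\hat M\big/\hat\F$. Counting endpoints then rules out $\mS^1$ and the mismatched possibilities: a doubled interval $[0,1]$ in the planar case would force both endpoints to map to the single S-foil while the interior homeomorphism drives one end to infinity, and a single-ended $[0,+\infty)$ in the spherical case would leave one of the two S-foils with no lift, contradicting surjectivity. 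This pins $\hat M\big/\hat\F$ down to the same endpoint structure as $M\big/\F$; combined with the interior homeomorphism, $\bar\pi$ is a continuous bijection between the (Hausdorff) foil spaces, hence a homeomorphism, completing the proof.

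The main obstacle I anticipate is exactly this endpoint bookkeeping. The interior statement is soft, but ruling out a non-compact or doubled foil space upstairs requires combining surjectivity of $\hat\pi$ with the fact that lifts of S-foils remain S-foils and thus genuine endpoints. This is precisely where the hypothesis enters: cylindrical, planar and spherical types have no SR-foils, so the only boundary foils are S-foils whose lifts behave predictably, whereas the twisted-cylindrical, toric, real-projective and Klein-bottled types do possess SR-foils (or have boundaryless foil space $\mS^1$), and for them a deck transformation genuinely can move foils, so the analogous conclusion fails — explaining why the statement is restricted to these three types.
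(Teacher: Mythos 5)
Your proposal is correct and follows essentially the same route as the paper's proof: rule out SR-foils upstairs, show the induced map $\hat{\pi}_{\#}$ on foil spaces is a homeomorphism by analyzing interiors and endpoints separately, and conclude $G=G_0(\hat{M},\hat{\pi})$ so that $\tilde{\pi}$ is trivial. You supply more detail than the paper does (the product structure $I\times F$ of the regular part and the explicit endpoint counting), but the underlying argument is the same.
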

    \begin{proof}
        First of all, we show that $\hat{F}$ contains no SR-foil. Otherwise, suppose $\hat{L}\in\hat{\F}$ is an SR-foil. For some $\hat{V}\in\nsb{\hat{L}}$ and sufficiently small $t>0$, there exists a foil $\hat{F}_t\in\hat{\F}$ such that $\exp_{\hat{L}}(\pm t,\hat{V})\in \hat{F}_t$. Since $\hat{\pi}$ is locally isometric, $L=\hat{\pi}(\hat{L})$ is a hypersurface, and 
        \[
            \exp_L(\pm t,\hat{\pi}\dwst\hat{V})\in\hat{\pi}(\hat{F}_t)\in\F. 
        \]
        Therefore, $L$ is an SR-foil in $\F$, which is impossible. 
        \par 
        Denote $I=M\big/\F$ and $\hat{\Lambda}=\hat{M}\big/\hat{\F}$. According to the types of $\F$ and $\hat{\F}$, $I$ is one of $\mR$, $[0,+\infty)$ or $[0,T]$, while $\hat{\Lambda}$ is one of $\mR$, $[0,+\infty)$, $[0,\hat{T}]$ or $\mR/\hat{t}_0\mZ$, and each endpoint of $I$ and $\hat{\Lambda}$, if it exists, corresponds to an S-foil. Denote $\hat{\pi}_{\#}:\hat{\Lambda}\to I$ the induced map of $\hat{\pi}$. It has the following properties: 
        \begin{itemize}
            \item $\hat{\pi}_{\#}$ maps the interior of $\hat{\Lambda}$ to the interior of $I$, the endpoints to the endpoints;
            \item the restriction of $\hat{\pi}_{\#}$ to the interior of $\hat{\Lambda}$ is locally homeomorphic. 
        \end{itemize}
        These imply that $\hat{\pi}_{\#}$ is a homeomorphism between $\hat{\Lambda}$ and $I$, and thus $G_0(\hat{M},\hat{\pi})=G(\hat{M},\hat{\pi})$. This completes the proof. 
    \end{proof}
    \par 
    Proposition \ref{prop: lift of essential is essential} contrasts with Proposition \ref{prop: non-essential to essential}, as transnormal systems of cylindrical, planar, and spherical types can only be lifted to transnormal systems of the same types, whereas the remaining four types behave differently. This observation motivates the following definition:
    \begin{defi}
        A transnormal system is called {\it essential}, if it is of cylindrical, planar or spherical type. For a transnormal system $\F$ of toric, twisted cylindrical, Klein-bottled or real-projective type on $M$, the corresponding Riemannian manifold $\tilde{M}$ and essential transnormal system $\tilde{\F}$, as given in Proposition \ref{prop: non-essential to essential}, are called the {\it essential cover} with respect to $\F$ and the {\it essential lift} of $\F$, respectively. As an extension, the {\it essential cover} and {\it essential lift} of $(M,\F)$ when $\F$ is essential are defined to be $M$ and $\F$ themselves, respectively. 
    \end{defi}
    \par 
    Next, we discuss a special case of regular covering maps: the universal cover. For $(M,\F)$, we denote $M_{\text{uni}}$ the universal covering space of $M$ via $\pi_{\text{uni}}:M_{\text{uni}}\to M$, and $\F_{\text{uni}}$ the lifting transnormal system on $M_{\text{uni}}$. 
    \begin{prop}\label{prop: the mid in decomposition of universal is essential}
        The pair $(\tilde{M},\tilde{\F})$ in sequence \myeqref{equ: decom of an arbitrary covering}, derived from the universal covering map $(M_{\text{uni}},\F_{\text{uni}})\xrightarrow{\pi_{\text{uni}}}(M,\F)$, are exactly the essential cover with respect to $\F$ and essential lift of $\F$. 
    \end{prop}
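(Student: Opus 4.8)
The plan is to reduce everything to the uniqueness statement of Proposition \ref{prop: decom of an arbitrary covering}. If $\F$ is already essential, then applying Proposition \ref{prop: lift of essential is essential} to the universal covering $\pi_{\text{uni}}$ shows that $\tilde{\pi}$ is trivial, so $(\tilde{M},\tilde{\F})=(M,\F)$, which by definition is the essential cover and essential lift in this case. Hence I may assume $\F$ is of toric, twisted-cylindrical, Klein-bottled, or real-projective type, and the task becomes identifying $(\tilde{M},\tilde{\F})$ with the specific essential cover and lift produced by Proposition \ref{prop: non-essential to essential}.

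For these types, let $r:(\tilde{M}_{\text{ess}},\tilde{\F}_{\text{ess}})\to(M,\F)$ denote the essential cover and lift supplied by Proposition \ref{prop: non-essential to essential}; recall that $\tilde{\F}_{\text{ess}}$ is essential and that $G(\tilde{M}_{\text{ess}},r)$ is the group generated by the relevant isometric foil-translation $\tau$ and/or foil-reflections $\sigma,\sigma_1,\sigma_2$. Since $M_{\text{uni}}$ is simply connected and covers $M$, the lifting criterion factors $\pi_{\text{uni}}$ through $r$, giving a covering $q:M_{\text{uni}}\to\tilde{M}_{\text{ess}}$ that is the universal cover of $\tilde{M}_{\text{ess}}$; because $\F$ is induced by a transnormal function $f$ and $f\circ r\circ q=f\circ\pi_{\text{uni}}$, the system lifted from $\tilde{\F}_{\text{ess}}$ along $q$ is exactly $\F_{\text{uni}}$. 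The strategy is to verify that the two-stage factorization $(M_{\text{uni}},\F_{\text{uni}})\xrightarrow{q}(\tilde{M}_{\text{ess}},\tilde{\F}_{\text{ess}})\xrightarrow{r}(M,\F)$ satisfies conditions (C1) and (C2); the uniqueness in Proposition \ref{prop: decom of an arbitrary covering} then forces $(\tilde{M},\tilde{\F})=(\tilde{M}_{\text{ess}},\tilde{\F}_{\text{ess}})$, which is precisely the assertion.

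Condition (C1) follows at once by applying Proposition \ref{prop: lift of essential is essential} to the covering $q$: its base system $\tilde{\F}_{\text{ess}}$ is essential, so the decomposition of $q$ is trivial, i.e.\ $G_0(M_{\text{uni}},q)=G(M_{\text{uni}},q)$, meaning every deck transformation of $q$ fixes every foil of $\F_{\text{uni}}$. For condition (C2), I examine the induced action of $G(\tilde{M}_{\text{ess}},r)$ on the foil space $\tilde{M}_{\text{ess}}/\tilde{\F}_{\text{ess}}$, which is $\mR$ in the toric, twisted-cylindrical, and Klein-bottled cases and $[0,T]$ in the real-projective case. The generators act as a nonzero translation and/or as reflections (at $0$ and at $t_0/2$), and the resulting group acts \emph{faithfully} by isometries of this interval; since a deck transformation fixing every foil of $\tilde{\F}_{\text{ess}}$ must induce the identity on the foil space, only the identity element can fix all foils, which is exactly (C2).

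The only genuinely delicate point is the faithfulness needed for (C2), which I would check type by type using the explicit foil-space action already recorded in Proposition \ref{prop: non-essential to essential} and its accompanying remark; there it is shown that these groups (cyclic, order two, or infinite dihedral) act freely on $\tilde{M}_{\text{ess}}$, and here I need only the weaker fact that no nontrivial element induces the identity on $\mR$ or $[0,T]$. The remaining bookkeeping — that $q$ is the universal cover of $\tilde{M}_{\text{ess}}$ and that the staged lift of $f$ reproduces $\F_{\text{uni}}$ — is routine covering-space theory and requires no new idea.
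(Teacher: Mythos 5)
Your proposal is correct and follows essentially the same route as the paper: both factor the universal covering through the essential cover $(M_{\text{ess}},\F_{\text{ess}})$ of Proposition \ref{prop: non-essential to essential}, obtain condition (C1) from Proposition \ref{prop: lift of essential is essential} applied to the upper covering, obtain condition (C2) from the fact that the essential covering group is generated by foil-translations and foil-reflections (so no nontrivial element fixes every foil), and then invoke the uniqueness in Proposition \ref{prop: decom of an arbitrary covering}. The only cosmetic differences are that you treat the essential case separately and phrase (C2) as faithfulness of the foil-space action rather than as the paper's contradiction argument.
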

    \begin{proof}
        Denote $(M_{\text{ess}},\F_{\text{ess}})$ the essential cover and essential lift of $(M,\F)$. Clearly, 
        \[
            (M_{\text{uni}},\F_{\text{uni}})\xrightarrow{p}(M_{\text{ess}},\F_{\text{ess}})\xrightarrow{\pi_{\text{ess}}}(M,\F)
        \]
        is a regular Riemannian covering sequence, and it suffices to verify that conditions (C1) and (C2) in Proposition \ref{prop: decom of an arbitrary covering} are satisfied, or equivalently, that $G_0(M_{\text{uni}},\pi_{\text{ess}}\circ p)=G(M_{\text{uni}},p)$. By Proposition \ref{prop: lift of essential is essential}, we have $G(M_{\text{uni}},p)=G_0(M_{\text{uni}},p)$, and this is a subgroup of $G_0(M_{\text{uni}},\pi_{\text{ess}}\circ p)$. Suppose, for the sake of contradiction, that there exists $\phi\in G_0(M_{\text{uni}},\pi_{\text{ess}}\circ p)\big\backslash G_0(M_{\text{uni}},p)$. Using an argument analogous to the proof of Proposition \ref{prop: decom of an arbitrary covering}, one can obtain a non-trivial $\psi\in G_0(M_{\text{ess}},\pi_{\text{ess}})$. However, this leads to a contradiction, as $G(M_{\text{ess}},\pi_{\text{ess}})$ is either trivial or generated by specific foil-translations or foil-reflections. Thus, the proof is complete. 
    \end{proof}
    \par 
    \begin{prop}\label{prop: universal property of proper covering}
        For sequence \myeqref{equ: decom of an arbitrary covering} derived from arbitrary $(\hat{M},\hat{\F})\to (M,\mathcal{F})$, there exists a Riemannian covering map $\pi\uppri:M_{\text{ess}}\to\tilde{M}$ satisfying $\pi_{\text{ess}}=\tilde{\pi}\circ\pi\uppri$, that is, the following diagram commutates. 
        \begin{equation*}
            \begin{tikzcd}
                & (M_{\operatorname{ess}},\F_{\operatorname{ess}}) \arrow{d}{\pi\uppri}\arrow{dr}{\pi_{\operatorname{ess}}} &\\
                (\hat{M},\hat{\F})\arrow{r}{p}  &(\tilde{M},\tilde{\F})\arrow{r}{\tilde{\pi}} &(M,\F)
            \end{tikzcd}
        \end{equation*}
    \end{prop}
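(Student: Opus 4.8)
The plan is to realize both $\pi_{\operatorname{ess}}$ and $\tilde\pi$ as quotients of the universal cover and then compare the relevant deck transformation groups. By Proposition \ref{prop: the mid in decomposition of universal is essential}, the essential cover is the middle term of the decomposition \myeqref{equ: decom of an arbitrary covering} associated with $\pi_{\operatorname{uni}}$; in particular there is a regular covering $p_{\operatorname{uni}}:(M_{\operatorname{uni}},\F_{\operatorname{uni}})\to(M_{\operatorname{ess}},\F_{\operatorname{ess}})$ with $\pi_{\operatorname{ess}}\circ p_{\operatorname{uni}}=\pi_{\operatorname{uni}}$ and $G(M_{\operatorname{uni}},p_{\operatorname{uni}})=G_0(M_{\operatorname{uni}},\pi_{\operatorname{uni}})$. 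Since $M_{\operatorname{uni}}$ is simply connected, the lifting criterion applies to the covering $\tilde\pi:\tilde M\to M$, yielding a Riemannian covering $r:M_{\operatorname{uni}}\to\tilde M$ with $\tilde\pi\circ r=\pi_{\operatorname{uni}}$. Because both $\pi_{\operatorname{ess}}$ and $\tilde\pi$ are now covered by $\pi_{\operatorname{uni}}$, the existence of the desired $\pi\uppri$ reduces, by the standard criterion for a covering to factor through another, to the single inclusion of deck groups $G(M_{\operatorname{uni}},p_{\operatorname{uni}})\subseteq G(M_{\operatorname{uni}},r)$.

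The heart of the argument is therefore to prove this inclusion, i.e.\ that every deck transformation $g$ of $\pi_{\operatorname{uni}}$ fixing all foils of $\F_{\operatorname{uni}}$ satisfies $r\circ g=r$. To compare the two towers I would again invoke the lifting criterion to obtain a covering $q:M_{\operatorname{uni}}\to\hat M$ with $\hat\pi\circ q=\pi_{\operatorname{uni}}$, chosen with compatible basepoints so that $r=p\circ q$. The key structural input is that $q$ maps foils into foils: from $f\circ\pi_{\operatorname{uni}}=(f\circ\hat\pi)\circ q$, each foil of $\F_{\operatorname{uni}}$, being a connected component of a level set of $f\circ\pi_{\operatorname{uni}}$, has connected $q$-image lying in a single level component of $f\circ\hat\pi$, hence inside a single foil of $\hat\F$. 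Writing $\bar g\in G(\hat M,\hat\pi)$ for the deck transformation induced by $g$ (such a $\bar g$ exists with $q\circ g=\bar g\circ q$ because $\hat\pi$ is regular), I would take any $\hat x\in\hat F$ with $\hat F\in\hat\F$, lift it to $x\in M_{\operatorname{uni}}$ with foil $F_{\operatorname{uni}}\ni x$, and use $g(F_{\operatorname{uni}})=F_{\operatorname{uni}}$ to get $\bar g(\hat x)=q(g(x))\in q(F_{\operatorname{uni}})\subseteq\hat F$; since $\bar g$ is a foil-preserving isometry, $\bar g(\hat F)$ is itself a foil contained in $\hat F$, forcing $\bar g(\hat F)=\hat F$. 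Thus $\bar g\in G_0(\hat M,\hat\pi)$.

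Once $\bar g$ fixes every foil of $\hat\F$, it lies in the group by which $p$ quotients, so $p\circ\bar g=p$ and hence $r\circ g=p\circ q\circ g=p\circ\bar g\circ q=p\circ q=r$, establishing $g\in G(M_{\operatorname{uni}},r)$ and completing the required inclusion. The factorization criterion then produces a unique continuous $\pi\uppri:M_{\operatorname{ess}}\to\tilde M$ with $\pi\uppri\circ p_{\operatorname{uni}}=r$; it is a Riemannian covering because it is locally the identification of two quotients of $M_{\operatorname{uni}}$ by deck groups acting freely, properly discontinuously and isometrically. Commutativity then follows formally: $\tilde\pi\circ\pi\uppri\circ p_{\operatorname{uni}}=\tilde\pi\circ r=\pi_{\operatorname{uni}}=\pi_{\operatorname{ess}}\circ p_{\operatorname{uni}}$, and since $p_{\operatorname{uni}}$ is surjective we may cancel it to obtain $\tilde\pi\circ\pi\uppri=\pi_{\operatorname{ess}}$.

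I expect the main obstacle to be the middle step, namely verifying that foil-fixing is transported correctly from the universal cover down to $\hat M$. The delicate points are that $q$ genuinely sends foils into foils (which rests on the compatibility $f\circ\pi_{\operatorname{uni}}=(f\circ\hat\pi)\circ q$ together with connectedness) and that the induced $\bar g$ is well defined and foil-preserving; everything downstream is the formal machinery of covering spaces.
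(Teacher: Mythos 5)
Your proof is correct and follows essentially the same route as the paper: both realize $M_{\operatorname{ess}}$ and $\tilde M$ as quotients of the universal cover and reduce the factorization to an inclusion of deck groups, the key point being that a foil-fixing deck transformation of $\pi_{\operatorname{uni}}$ descends to a foil-fixing deck transformation of $\hat\pi$ and hence lies in the group by which $p$ quotients. Your argument via $f\circ\pi_{\operatorname{uni}}=(f\circ\hat\pi)\circ q$ and the induced $\bar g$ is exactly the ``direct analysis of the quotient spaces'' that the paper's proof leaves implicit.
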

    \begin{proof}
        Given a regular covering sequence $\hat{M}_1\xrightarrow{\iota}\hat{M}_2\xrightarrow{\hat{\pi}_2}M$ with $\hat{\pi}_1=\hat{\pi}_2\circ\iota$, consider the decompositions of $\hat{M}_1\xrightarrow{\hat{\pi}_1}M$ and $\hat{M}_2\xrightarrow{\hat{\pi}_2}M$. Through a direct analysis of the quotient spaces, we obtain a regular covering map $[\iota]:\tilde{M}_1\to\tilde{M}_2$ such that the following diagram commutes:
        \begin{equation}\label{equ: diagram of covering sequence}
            \begin{tikzcd}
                \hat{M}_1 \arrow{r}{p_1} \arrow{dd}{\iota} & \tilde{M}_1 \arrow{dd}{[\iota]}\arrow{drr}{\tilde{\pi}_1} &&\\
                &&& M.\\
                \hat{M}_2\arrow{r}{p_2}  & \tilde{M_2}\arrow{urr}{\tilde{\pi}_2}&&
            \end{tikzcd}
        \end{equation}
        Substituting $\hat{M}_1$ for the universal cover $M_{\text{uni}}$ and $\hat{M}_2$ for the considered space $\hat{M}$, we complete the proof by Proposition \ref{prop: the mid in decomposition of universal is essential}.
    \end{proof}
    \par 
    In summary, the regular covering sequence \myeqref{equ: decom of an arbitrary covering} provides a unique decomposition of the covering map $\hat{\pi}$ into two covering maps, $p$ and $\tilde{\pi}$. The map $\tilde{\pi}$ corresponds to the lift of the foil space $M\big/\F$, while $p$ acts on each foil individually. The essential lift is characterized as the maximal lift of the foil space.

\section{Transnormal systems on three-dimensional manifolds}
    We focus on the classification of transnormal systems on compact three-manifolds in the equivariant sense of Definition \ref{def: equivalence of tran sys}. The geometry and topology of three-manifolds have been extensively studied, with notable contributions such as Thurston's geometrization. Throughout this section, we assume that $M$ is a compact three-manifold (without boundary) and $\F$ is an embedded transnormal system of codimension one on $M$. Since foil space $M/\F$ is compact, it follows that $\F$ must be of toric, Klein-bottled, spherical, or real-projective type. Each of these types will be discussed in detail.
    \subsection{Transnormal systems of toric type}\label{subsec: 3d and toric type}
    We firstly provide the criterion for equivalence of two transnormal systems of toric type. 
    \begin{prop}\label{prop: equiv of toric type}
        Let $\F$ and $\F\uppri$ be transnormal systems of toric type on Riemannian manifolds $M$ and $M\uppri$, respectively. They are equivalent if and only if there exists $F_{[t]}\in \F$, $F\uppri_{[t\uppri]}\in\F\uppri$, and two homeomorphisms $\varphi_1,\varphi_2:F_{[t]}\to F\uppri_{[t\uppri]}$ smoothly isotopic to each other, such that $\tilde{\tau}\uppri=\varphi_1\circ\tilde{\tau}\circ\varphi_2^{-1}$ or $\tilde{\tau}\uppri=\varphi_1\circ\tilde{\tau}^{-1}\circ\varphi_2^{-1}$ holds, where $\tilde{\tau}(x)$ is defined as the point where the directed normal geodesic, starting from $x\in F_{[t]}$, first return to $F_{[t]}$, and $\tilde{\tau}\uppri$ is defined similarly on $F\uppri_{[t\uppri]}$.   
    \end{prop}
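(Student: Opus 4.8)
The plan is to translate the equivalence of toric transnormal systems into a statement about the monodromy of a surface bundle over $\mS^1$, for which the asserted condition is precisely conjugacy of monodromies up to inversion. By Proposition \ref{prop: non-essential to essential}(i) and the ensuing discussion, a toric transnormal system realizes $M$ as the mapping torus $\mathcal{M}_{\tilde{\tau}}$ of a foil $F$, where $\tilde{\tau}$ is exactly the first-return diffeomorphism; equivalently $M=(\mR\times F)/\langle\tau\rangle$, where the cylindrical essential lift on $\mR\times F$ carries the foil-translation $\tau(s,x)=(s+t_0,\tilde{\tau}(x))$, and similarly $M\uppri=(\mR\times F\uppri)/\langle\tau\uppri\rangle$ with $\tau\uppri(s,x)=(s+t_0\uppri,\tilde{\tau}\uppri(x))$. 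Since the foils are closed surfaces, the distinctions between homeomorphism and diffeomorphism, and between homotopy and smooth isotopy, are immaterial, so the hypothesis that $\varphi_1,\varphi_2$ are smoothly isotopic is just the equality $[\varphi_1]=[\varphi_2]$ in the mapping class group $\MCG{F}$; the stated condition thus reads $[\tilde{\tau}\uppri]=[\varphi][\tilde{\tau}]^{\pm1}[\varphi]^{-1}$ for a single class $[\varphi]$.

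For the sufficiency direction, I would construct a foil-preserving diffeomorphism on the cylindrical covers and descend it. Choosing a smooth isotopy $\psi_s$, $s\in[0,t_0]$, from $\psi_0=\varphi_2$ to $\psi_{t_0}=\varphi_1$ (arranged to be constant near the endpoints), I extend the family to all of $\mR$ by the equivariance rule $\psi_{s+t_0}=\tilde{\tau}\uppri\circ\psi_s\circ\tilde{\tau}^{-1}$; the relation $\tilde{\tau}\uppri=\varphi_1\circ\tilde{\tau}\circ\varphi_2^{-1}$ is exactly the compatibility making this extension consistent across each period. The map $(s,x)\mapsto(\tfrac{t_0\uppri}{t_0}s,\psi_s(x))$ then intertwines $\tau$ with $\tau\uppri$, hence descends to a foil-preserving diffeomorphism $\mathcal{M}_{\tilde{\tau}}\to\mathcal{M}_{\tilde{\tau}\uppri}$. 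The alternative $\tilde{\tau}\uppri=\varphi_1\circ\tilde{\tau}^{-1}\circ\varphi_2^{-1}$ is handled by precomposing with the base-reversing identification $\mathcal{M}_{\tilde{\tau}}\cong\mathcal{M}_{\tilde{\tau}^{-1}}$ given by $(s,x)\mapsto(-s,x)$.

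For the necessity direction, given a foil-preserving diffeomorphism $\phi:M\to M\uppri$, I would pass to the cylindrical essential covers and lift $\phi$ to $\hat\phi:\mR\times F\to\mR\times F\uppri$. The induced map of foil spaces $M/\F\cong\mS^1\to M\uppri/\F\uppri\cong\mS^1$ is either orientation preserving or reversing, so, after conjugating deck groups, $\hat\phi$ satisfies $\hat\phi\circ\tau=(\tau\uppri)^{\pm1}\circ\hat\phi$. Writing $\hat\phi(s,x)=(\beta(s),\Psi_s(x))$ with $\Psi_s:F\to F\uppri$ varying smoothly and evaluating this intertwining relation over one fundamental domain yields $\tilde{\tau}\uppri=\Psi_{t_0}\circ\tilde{\tau}^{\pm1}\circ\Psi_0^{-1}$; setting $\varphi_1:=\Psi_{t_0}$ and $\varphi_2:=\Psi_0$, the family $\Psi_s$ furnishes a smooth isotopy between them, giving exactly the asserted condition, with the sign fixed by the orientation of the induced circle map.

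The main obstacle is that $\phi$ is only assumed to be a foil-preserving diffeomorphism, not an isometry, so it need not carry normal geodesics of $M$ to those of $M\uppri$ and therefore does not transport the metrically defined first-return map $\tilde{\tau}$ directly. The purpose of passing to the essential cylindrical cover is precisely to launder this metric dependence into the purely topological monodromy of the surface bundle, which is a smooth invariant; the first-return maps attached to different foils or different compatible metrics all lie in a single conjugacy class, which is why the isotopy freedom encoded by the pair $(\varphi_1,\varphi_2)$ rather than a single conjugating map is both unavoidable and exactly correct. The remaining care is to verify that $\phi$ indeed lifts to the cylindrical covers (compatibility of the deck groups under $\phi$) and to track how the orientation of the induced circle map on $M/\F$ produces the dichotomy between $\tilde{\tau}\uppri$ and $(\tilde{\tau}\uppri)^{-1}$.
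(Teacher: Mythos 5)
Your proposal is correct and follows essentially the same route as the paper: both directions pass to the cylindrical essential cover via Proposition \ref{prop: non-essential to essential}(i) and Proposition \ref{prop: universal property of proper covering}, read off the conjugacy relation $\tilde{\tau}\uppri=\Phi_{s+t_0}\circ\tilde{\tau}\circ\Phi_s^{-1}$ from the lifted intertwining map for necessity, and for sufficiency build the equivariant map $(s,x)\mapsto(\tfrac{t_0\uppri}{t_0}s,\Psi_s(x))$ from a cut-off isotopy extended periodically by the deck transformations. Your explicit handling of the orientation-reversing case via $(s,x)\mapsto(-s,x)$ is a minor point the paper absorbs into its ``without loss of generality'' normalization, but the argument is the same.
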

    \begin{proof}
        Assume a diffeomorphism $\varphi:M\to M\uppri$ induces the equivalence between $\F=\{F_{[t]}\}_{[t]\in\mR/t_0\mZ}$ and $\F\uppri=\{F\uppri_{[t]}\}_{[t]\in\mR/t\uppri_0\mZ}$. By Proposition \ref{prop: non-essential to essential} (i), the essential cover of $M$ with respect to $\F$ is diffeomorphic to $\mR\times F$, and the covering transformation group is generated by a foil-translation $\tau$ with $\tau(s,x)=(s+t_0,\tau_F(x))$. Similarly, for $(M\uppri,\F\uppri)$, we have the essential cover diffeomorphic to $\mR\times F\uppri$ and generator $\tau\uppri$ of the covering transformation group with $\tau\uppri(s,x\uppri)=(s+t\uppri_0,\tau_{F\uppri}(x\uppri))$. Due to Proposition \ref{prop: universal property of proper covering}, $\varphi$ induces a diffeomorphism $\Phi:\mR\times F\to \mR\times F\uppri$ with $\Phi(s,x)=(h(s),\Phi_s(x))$ such that the following diagram commutates:
        \begin{equation*}
            \begin{tikzcd}
                \mR\times F \arrow{r}{\Phi} \arrow{d}{\tau} & \mR\times F\uppri \arrow{d}{\tau\uppri}\\
                \mR\times F\arrow{r}{\Phi}  & \mR\times F\uppri. 
            \end{tikzcd}
        \end{equation*}
        Without loss of generality, we can assume $h\uppri(s)\ge 0$, and then, 
        \begin{equation}\label{equ: condition of ess induced to below in toric type}
            \left\{
                \begin{aligned}
                    &h(s)+t_0\uppri=h(s+t_0),\\
                    &\tau_{F\uppri}\uppri=\Phi_{s+t_0}\circ\tau_{F}\circ\Phi_s^{-1}. 
                \end{aligned}
            \right.
        \end{equation}
        \par 
        Identifying $F$ with $F_{[t]}$, the transformation $\tilde{\tau}$ coincides with $\tau_F$. As $\Phi$ is smooth, $\varphi_1=\Phi_{s+t_0}$ is evidently smoothly isotopic to $\varphi_2=\Phi_{s}$.       
        \par 
        Conversely, if $\varphi_1$ is smoothly isotopic to $\varphi_2$, there exists a smooth isotopy $\Psi:\mR\times F\to F\uppri$ such that $\Psi(0,\cdot)=\varphi_1$ and $\Psi(1,\cdot)=\varphi_2$. Define $\Phi:\mR\times F\to \mR\times F\uppri$ by 
        \[
            \Phi(s,x)=\left(\frac{t\uppri_0}{t_0}\cdot s,\Phi_s(x)\right),
        \] 
        where $\Phi_s(x)$ is given by 
        \begin{equation*}
            \Phi_s(x)=\left\{ 
                \begin{aligned}
                    &\Psi(\chi(s),x),\quad &s\in[0,t_0],\\
                    &{\tau_{F\uppri}\uppri}^{k}\circ\Phi_{s-kt_0}\circ\tau_{F}^{-k}(x),\quad &s\in[kt_0,(k+1)t_0],
                \end{aligned}    
            \right.
        \end{equation*}
        and $\chi$ is a smooth function such that $\chi(s)=0$ for $s<\frac{t_0}{3}$ and $\chi(s)=1$ for $s>\frac{2t_0}{3}$. The map $\Phi$ establishes the equivalence between the essential lifts of $\F$ and $\F\uppri$, satisfying the equations in \myeqref{equ: condition of ess induced to below in toric type}. Consequently, $\F$ and $\F\uppri$ are equivalent. 
    \end{proof}
    \par 
    When $M$ is a compact 3-manifold and $\F$ is of toric type, the DR-foils in $\F$ are diffeomorphic to a closed surface. Combining Proposition \ref{prop: equiv of toric type} with the properties of transnormal systems of toric type in Section \ref{sec: covering and lifting}, we immediately have 
    \begin{thm}\label{thm: equiv of tran sys of toric type}
        Let $S$ be a closed surface, and $\mathcal{E}_{\text{toric}}(S)$ collect all the equivalence classes of transnormal systems of toric type on compact 3-manifolds with DR-foils homeomorphic to $S$. Then, $\mathcal{E}_{\text{toric}}(S)$ is in one-to-one correspondence with the set
        \begin{equation*}
            \eMCG{S}\big/\sim,
        \end{equation*} 
        where $\eMCG{S}$ is the extended mapping class group of $S$ and $\sim$ is an equivalence relation generated by inverses and conjugates. 
    \end{thm}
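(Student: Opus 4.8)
The plan is to establish the one-to-one correspondence by constructing explicit maps between $\mathcal{E}_{\text{toric}}(S)$ and $\eMCG{S}/\!\sim$ and verifying they are mutually inverse. The bridge between the two sides is the observation (from Section \ref{subsec: 3d and toric type}) that a transnormal system $\F$ of toric type on a compact 3-manifold $M$ is encoded, via its essential cover, by the return diffeomorphism $\tilde{\tau}$ of a DR-foil $F \cong S$. Since $M \simeq \mathcal{M}_{\tilde{\tau}}$ is the mapping torus, the topology and the foil structure of $M$ are completely determined by the isotopy class $[\tilde{\tau}]$, and conversely every self-diffeomorphism of $S$ arises as such a return map.

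\emph{The forward map.} Given an equivalence class $[\F] \in \mathcal{E}_{\text{toric}}(S)$, I would choose a DR-foil $F_{[t]} \cong S$, form the return diffeomorphism $\tilde{\tau}$, and send $[\F]$ to the class of $[\tilde{\tau}]$ in $\eMCG{S}$ modulo the relation $\sim$. The content here is well-definedness: I must check that this does not depend on the choice of foil within $\F$, nor on the representative $\F$ within its equivalence class. For the first, moving to a different foil $F_{[t']}$ conjugates $\tilde{\tau}$ by the flow along normal geodesics, an isotopy, so the mapping class is unchanged. For the second, Proposition \ref{prop: equiv of toric type} is exactly the needed input: two equivalent toric systems have return maps satisfying $\tilde{\tau}' = \varphi_1 \circ \tilde{\tau}^{\pm 1} \circ \varphi_2^{-1}$ with $\varphi_1 \simeq \varphi_2$, which on the level of mapping classes is precisely a conjugate (taking $\varphi_1 \simeq \varphi_2$ gives $[\varphi_1]=[\varphi_2]$, hence conjugation by a single element) composed possibly with passing to the inverse — that is, the relation $\sim$ generated by inverses and conjugates.

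\emph{The inverse map.} Given $[\psi] \in \eMCG{S}$, I would build the mapping torus $\mathcal{M}_{\psi}$, equip it with a Riemannian metric making the fibers into the DR-foils of a transnormal system of toric type (such a metric exists by Theorem \ref{thm: intro: four equivlent conditions} together with the product structure $\mR \times S$ on the essential cover), and assign the resulting equivalence class in $\mathcal{E}_{\text{toric}}(S)$. Well-definedness on $\sim$-classes again uses Proposition \ref{prop: equiv of toric type}: isotopic maps give diffeomorphic mapping tori with equivalent foliations, and $\psi, \psi^{-1}$ yield the same system read from opposite normal orientations. Finally I would verify that the two constructions are mutually inverse — composing them returns the original mapping class up to $\sim$ on one side, and returns an equivalent toric system on the other — both of which reduce to the defining correspondence $M \simeq \mathcal{M}_{\tilde{\tau}}$ and the converse direction of Proposition \ref{prop: equiv of toric type}.

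\emph{The main obstacle} is the precise bookkeeping of the relation $\sim$: I must confirm that the smooth-isotopy condition on $\varphi_1, \varphi_2$ in Proposition \ref{prop: equiv of toric type} translates to ordinary conjugacy in $\eMCG{S}$ rather than something coarser, and that the two clauses $\tilde{\tau}' = \varphi_1 \circ \tilde{\tau} \circ \varphi_2^{-1}$ and $\tilde{\tau}' = \varphi_1 \circ \tilde{\tau}^{-1} \circ \varphi_2^{-1}$ account for exactly the inverse-and-conjugate relation and no more. The subtlety is that $\varphi_1, \varphi_2$ are allowed to be orientation-reversing on $S$ (hence the \emph{extended} mapping class group $\eMCG{S}$), and that the diffeomorphism $\varphi: M \to M\uppri$ need not preserve the direction of the normal geodesics, which is what produces the $\tilde{\tau}^{-1}$ alternative. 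Carefully matching these degrees of freedom against the generators of $\sim$ is where the argument must be watertight; the remaining verifications are routine once this dictionary is fixed.
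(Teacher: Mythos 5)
Your proposal is correct and follows essentially the same route as the paper, which derives the theorem directly from Proposition \ref{prop: equiv of toric type} (for well-definedness and injectivity of the correspondence) together with the mapping-torus description of toric-type systems from Section \ref{sec: covering and lifting} (for surjectivity). The paper states this as an immediate consequence without spelling out the bookkeeping; your elaboration of the forward and inverse maps, and of how the two clauses of Proposition \ref{prop: equiv of toric type} match the inverse-and-conjugate relation, is exactly the intended argument.
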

    \begin{rmk}
        A detailed discussion of the extended mapping class group can be found in \cite[Chap.8]{farb2011primer}. The quotient set $\eMCG{S}\big/\sim$ does not necessarily form a group. In this paper, the equivalence class of the homeomorphism $\tilde{\tau}$ in $\eMCG{S}\big/\sim$ is denoted by $\llbracket\tilde{\tau}\rrbracket$. 
    \end{rmk}
    \par 
    \begin{ex}\label{example: periodic homeo inducing tran sys}
        Let $S$ be a closed surface, and $\tilde{\tau}$ be a homeomorphism of $S$ satisfying $\tilde{\tau}^{k}=id$ for some positive integer $k$. Given any Riemannian metric $\metr_{0}$ on $S$, the metric 
        \[
            \metr_S=\sum_{i=0}^{k-1}(\tilde{\tau}\upst)^i\,\metr_{0}
        \]
        is clearly invariant under the action of $\tilde{\tau}$. Consequently, the map $\tau(t,x)=(t+1,\tilde{\tau}(x))$ is an isometry on the product space $\left(\mR\times S,\dd{t^2}+\metr_S\right)$. By Proposition \ref{prop: non-essential to essential}, this induces a transnormal system $\F$ of toric type on $\mR\times S\big/\langle\tau\rangle$, whose equivalence class corresponds to $\llbracket\tilde{\tau}\rrbracket\in\eMCG{S}\big/\sim$. Moreover, all foils in $\F$ are totally geodesic. 
    \end{ex}
    \par 
    \begin{ex}
        When DR-foils are homeomorphic to the 2-sphere, the set $\eMCG{\mS^2}\big/\sim$ consists of only two elements, which corresponds to the identify $id_{\mS^2}$ and the antipodal map $\sigma_{\mS^2}$, respectively. The mapping torus $\mathcal{M}_{id_{\mS^2}}$ and $\mathcal{M}_{\sigma_{\mS^2}}$ are homeomorphic to $\mS^1\times\mS^2$ and the twisted $\mS^2$-bundle over $\mS^1$, respectively.
    \end{ex}
    \begin{ex}
        When DR-foils are homeomorphic to $\mR P^2$, the set $\eMCG{\mR P^2}\big/\sim$ consists of a single element $\llbracket id_{\mR P^2}\rrbracket$. The mapping torus $\mathcal{M}_{id_{\mR P^2}}$ is homeomorphic to $\mS^1\times\mR P^2$.
    \end{ex}
    \par 
    \begin{ex}\label{example: dehn twist of torus inducing tran sys}
        Assume $\tilde{\tau}$ is the Dehn twist of flat torus $T^2=\mR^2/\mathbb{Z}^2$. By definition (see \cite[Chap.3]{farb2011primer} for example), Dehn twist $\tilde{\tau}$ is isotopic to the linear transformation induced by the matrix
        \[
            D=\left(
                \begin{array}{cc}
                    1&1\\
                    0&1
                \end{array}
            \right)\in\operatorname{SL}(2,\mZ).
        \]
        Consider on $\mR\times T^2$ the Riemannian metric 
        \begin{equation}\label{equ: metric of nil geometry}
            \metr=\dd{t^2}+\dd{x^2}+\left(\dd{y}-t\,\dd{x}\right)^2,
        \end{equation}
        and then $\tilde{\F}=\left\{\{t\}\times T^2\right\}_{t\in\mR}$ is a transnormal system on $(\mR\times T^2,\metr)$. Define $\tau:\mR\times T^2\to \mR\times T^2$ by
        \[
            \tau(t,x,y)=\big(t+1,(x,y)\cdot D\big)=(t+1,x,x+y),
        \]
        which is indeed an isometric foil-translation with respect to $\tilde{\F}$.  
        Moreover, each hypersurface $\{t\}\times T^2$ has constant principal curvatures $\frac{1}{2}$ and $-\frac{1}{2}$. Thus, by Proposition \ref{prop: non-essential to essential}, there exists a CPC transnormal system $\F$ of toric type on $\mR\times T^2\big/\langle\tau\rangle$, whose equivalence class corresponds to $\llbracket\tilde{\tau}\rrbracket\in\eMCG{T^2}\big/\sim$. Note that the metric $\metr$ in \myeqref{equ: metric of nil geometry} is locally isometric to the Nil geometry. 
    \end{ex}
    \begin{ex}\label{example: Anosov homeo of torus inducing tran sys}
        Assume $\tilde{\tau}$ is an Anosov homeomorphism (see \cite[Chap.13]{farb2011primer} for example) of flat torus $T^2=\mR^2/\mathbb{Z}^2$. Similarly to Example \ref{example: dehn twist of torus inducing tran sys}, $\tilde{\tau}$ is isotopic to the linear transformation induced by a matrix $A\in\operatorname{SL}(2,\mZ)$ with two distinct positive eigenvalues, denoted by $\lambda(>1)$ and $\lambda^{-1}$. Equip $\mR\times T^2$ with metric 
        \begin{equation}\label{equ: metric of Anosov of torus}
            \metr=\ln^2\lambda\,\dd{t^2}+\lambda^{-2t}\dd{\xi_1^2}+\lambda^{2t}\dd{\xi_2^2},
        \end{equation}
        where $(\xi_1,\xi_2)$ are affine coordinates on $\mR^2$ with 
        \begin{equation}\label{equ: eigenvector of anosov}
            A\left(\pp{\xi_1}\right)=\lambda \pp{\xi_1},\quad A\left(\pp{\xi_2}\right)=\lambda^{-1} \pp{\xi_2}. 
        \end{equation}
        Then, the action 
        \[
            \tau(t,x,y)=\left(t+1,(x,y)\cdot A\right)
        \]
        is an isometric foil-translation with respect to $\left\{\{t\}\times T^2\right\}_{t\in\mR}$. We obtain a transnormal system $\F$ of toric type on $\mR\times T^2\big/\langle\tau\rangle$ by Proposition \ref{prop: non-essential to essential}, whose equivalence class corresponds to $\llbracket\tilde{\tau}\rrbracket\in\eMCG{T^2}\big/\sim$. Moreover, $\metr$ in \myeqref{equ: metric of Anosov of torus} is locally isometric to the Sol geometry, and all the foils have constant principal curvatures $1$ and $-1$. 
    \end{ex}
    \par 
    Next, we will focus on CPC transnormal system $\F$ of toric type on orientable compact 3-manifold $M$. The cases of non-orientable manifolds can be discussed in a similar manner. 
    Assume the foils in $\F$ are all homeomorphic to $S$. Since $M$ is covered by $\mR\times S$, the orientability of $M$ ensures that $S$ is orientable, and the associated foil-translation preserves orientation. Similarly, the set of such equivlence classes of transnormal systems is in one-to-one correspondence with 
    \begin{equation*}
        \MCG{S}\big/\sim, 
    \end{equation*}
    where $\MCG{S}$ is the mapping class group of the orientable closed surface $S$ and $\sim$ is the same equivalence relation as in Theorem \ref{thm: equiv of tran sys of toric type}. 
    \begin{prop}\label{prop: CPC for toric type}
        Let $M$ be an orientable compact 3-manifold, and $\F$ be a transnormal system of toric type on $M$ whose foils are homeomorphic to the orientable closed surface $S_g$ of genus $g$. 
        \begin{enumerate}[(i)]
            \item If $g\le 1$, then, up to equivlence, $\F$ is a CPC transnormal system, and meanwhile $M$ is locally isometric to $\mS^2\times\mR$, $\mR^3$, Nil geometry or Sol geometry.
            \item If $g\ge 2$, $\F$ is equivalently a CPC transnormal system if and only if $\tilde{\tau}$ is an orientation-preserving periodic homeomorphism of $S_g$ and the equivalence class of $\F$ corresponds to $\llbracket\tilde{\tau}\rrbracket\in\MCG{S_g}\big/\sim$. In this case, up to equivlence of CPC transnormal systems, $M$ is locally isometric to $\mH^2\times\mR$.   
        \end{enumerate}
    \end{prop}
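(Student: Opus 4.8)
The plan is to pass to the mapping-torus description $M\simeq\mathcal{M}_{\tilde\tau}$ from Section~\ref{sec: covering and lifting} and to work entirely through the monodromy $\tilde\tau$, using the orientable refinement of Theorem~\ref{thm: equiv of tran sys of toric type}: the relevant equivalence classes are in bijection with $\MCG{S_g}/\!\sim$, and $\tilde\tau$ is orientation-preserving because $M$ is orientable, as already noted above. Since any representative of a class $\llbracket\tilde\tau\rrbracket$ may be used, it suffices, for each class, either to exhibit a CPC representative carrying the asserted local geometry, or to prove that none exists.

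For (i) I would run the Nielsen--Thurston trichotomy on the low-genus mapping class groups. If $g=0$, the only orientation-preserving class is trivial, so $\tilde\tau\simeq\operatorname{id}$ and the round product metric on $\mS^2\times\mR$ gives totally geodesic, hence CPC, foils. If $g=1$, then $\MCG{T^2}\cong\operatorname{SL}(2,\mZ)$ and every class is periodic, parabolic, or Anosov according to $|\operatorname{tr}|<2$ (together with $\pm I$), $|\operatorname{tr}|=2$, or $|\operatorname{tr}|>2$. A periodic class is realized by a finite-order linear map isometric for a suitable flat structure, yielding totally geodesic foils and ambient $\mR^3$; the parabolic and Anosov classes are realized by the explicit CPC systems of Example~\ref{example: dehn twist of torus inducing tran sys} (ambient Nil) and Example~\ref{example: Anosov homeo of torus inducing tran sys} (ambient Sol), the latter construction extending verbatim to the negative-eigenvalue case by taking $\lambda$ to be the larger eigenvalue in absolute value. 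In each case the CPC representative realizes the same $\llbracket\tilde\tau\rrbracket$, so $\F$ is equivalent to it.

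For (ii), the ``if'' direction uses Nielsen realization for finite cyclic groups: an orientation-preserving periodic $\tilde\tau$ on $S_g$ is isotopic to an isometry of some hyperbolic metric $\metr_{\mH}$, whence $(\mR\times S_g,\ \dd{t}^2+\metr_{\mH})/\langle\tau\rangle$ is locally $\mH^2\times\mR$ with totally geodesic CPC foils realizing $\llbracket\tilde\tau\rrbracket$. The substance is the ``only if'' direction, which I would argue by rigidity. On a foil of genus $g\ge 2$, two distinct constant principal curvatures would define two orthogonal continuous line fields, impossible since $\chi(S_g)<0$; hence each foil is totally umbilic, with shape operator $\kappa(t)\operatorname{Id}$. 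Writing the metric of the cylindrical essential lift in global Fermi coordinates as $\dd{t}^2+\metr_t$, umbilicity gives $\partial_t\metr_t=2\kappa(t)\metr_t$, so $\metr_t=\rho(t)^2\metr_0$ for a fixed metric $\metr_0$ on $S_g$. Invariance of this metric under the foil-translation $\tau(t,x)=(t+t_0,\tilde\tau(x))$ then forces $\tilde\tau\upst\metr_0=c^2\metr_0$ for a constant $c$, and comparing total areas (a diffeomorphism preserves area while a homothety scales it by $c^2$) gives $c=1$; thus $\tilde\tau$ is an isometry of $(S_g,\metr_0)$. Since a closed surface with $\chi<0$ admits no nontrivial Killing field --- an isolated zero would have index $+1$ and a zero-free one would be a line field, both contradicting $\chi<0$ --- the isometry group $\operatorname{Isom}(S_g,\metr_0)$ is finite, so $\tilde\tau$ has finite order and is periodic, which is what we wanted.

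The hard part is exactly this only-if chain: extracting total umbilicity from the line-field count, and then upgrading the conformal evolution $\metr_t=\rho(t)^2\metr_0$ into a genuine isometry of the single fixed metric $\metr_0$. The delicate point is that umbilicity must be established and the Fermi-coordinate normal form must be set up globally on the essential lift $\mR\times S_g$ rather than foil-by-foil, and it is here --- and again in the absence of Killing fields --- that the negativity of $\chi(S_g)$ is decisive.
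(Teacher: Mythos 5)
Your proposal is correct and follows essentially the same route as the paper: the same genus/Nielsen--Thurston case division with the Nil and Sol examples for $g=1$, and for $g\ge 2$ the same key chain of forcing umbilicity from $\chi(S_g)<0$ via the nonexistence of a continuous line field, deducing that the metric on the essential lift is a warped product, and concluding that $\tilde\tau$ is an isometry of a fixed metric with finite order. The only (harmless) differences are that you integrate the Fermi-coordinate evolution equation $\partial_t\metr_t=2\kappa(t)\metr_t$ directly where the paper invokes Montiel's closed-conformal-vector-field theorem, and you derive finiteness of the isometry group from the absence of Killing fields where the paper cites finiteness of conformal automorphism groups of hyperbolic surfaces.
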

    \begin{proof}
        Assume that $\F$ corresponds to $\llbracket\tilde{\tau}\rrbracket\in\MCG{S}\big/\sim$.
        \par 
        If $g=0$, the mapping class group of sphere $\mS^2$ is trivial, and so is $\MCG{\mS^2}\big/\sim$. $\F$ is equivalent to the transnormal system $\big\{\{\theta\}\times\mS^2\big\}_{\theta\in\mS^1}$ on $\mS^1\times\mS^2$. In this case, the conclusion naturally follows. 
        \par 
        If $g=1$, according to the classification of homeomorphisms on torus $T^2$ (see \cite[Chap.13]{farb2011primer}), $\tilde{\tau}$ is isotopic to a periodic, reducible, or Anosov homeomorphism. Within each periodic isotopy class, there exists a homeomorphism acting isometrically on a flat torus. Thus, by Example \ref{example: periodic homeo inducing tran sys}, it follows that, up to equivalence, $M$ is locally isometric to Euclidean $\mR^3$, and each foil is totally geodesic. 
        If $\tilde{\tau}$ is isotopic to a reducible homeomorphism, it can be assumed to be a multiple of a Dehn twist. By analogy with Example \ref{example: dehn twist of torus inducing tran sys}, this gives rise to a CPC transnormal system, with the ambient manifold being locally isometric to Nil geometry. If $\tilde{\tau}$ is isotopic to an Anosov homeomorphism, it corresponds to a CPC transnormal system, where the ambient manifold is locally isometric to Sol geometry, as in Example \ref{example: Anosov homeo of torus inducing tran sys}. 
        \par 
        When $g\ge 2$, we firstly assume $\tilde{\tau}$ is a periodic homeomorphism of $S_g$ and the equivalence class of $\F$ corresponds to $\llbracket\tilde{\tau}\rrbracket\in\MCG{S_g}\big/\sim$. By the uniformization theorem and the fact that conformal maps on the hyperbolic plane are isometries, $\tilde{\tau}$ is an isometry of $S_g$ with respect to a hyperbolic metric. It then follows from Example \ref{example: periodic homeo inducing tran sys} that, up to equivalence, the foils in $\F$ are totally geodesic and the ambient manifold is locally isometric to $\mH^2\times\mR$. 
        As for the converse, if $\F$ is a CPC transnormal system, we consider the essential cover $\big(\mR\times S_{g},\dd{t^2}+\metr_{S_{g}}(t)\big)$ with respect to $\F$, where each $\{t\}\times S_g$ has constant principal curvatures. By the line bundle version of the Poincaré-Hopf theorem (\cite[2.2. Theorem II]{hopf2003differential}), the two constant principal curvatures on $\{t\}\times S_g$ are equal, denoted by $\kappa(t)$. It follows that $X=\exp\{-\int_0^t\kappa(s)\dd{s}\}\pp{t}$ is a closed conformal vector field. According to \cite{montiel1999unicity}, the metric $\dd{t^2}+\metr_{S_{g}}(t)$ is factually a warped product metric, i.e., $\metr_{S_{g}}(t)=f^2(t)\metr_{S_{g}}$ for a fixed metric $\metr_{S_{g}}$ and a smooth positive function $f(t)$. Since the associated foil-translation $\tau(t,x)=(t+t_0,\tilde{\tau}(x))$ on $\mR\times S_{g}$ is an isometry with respect to $\dd{t^2}+f^2(t)\metr_{S_{g}}$, we obtain that $f(t)=f(t+t_0)$ and $\tilde{\tau}$ is isometric on $(S_g,\metr_{S_g})$. By the uniformization theorem, $\langle\tilde{\tau}\rangle$ is a conformal subgroup of $S_g$ with respect to some hyperbolic metric. Note that the conformal subgroups of any closed hyperbolic surface are finite, $\tilde{\tau}$ must be a periodic homeomorphism. 
    \end{proof}
    \subsection{Transnormal systems of Klein-bottled type}\label{subsec: 3d and Klein-bottled type}
    The criterion for equivalence of two transnormal systems of Klein-bottled type is as follows. 
    \begin{prop}\label{prop: equiv of Klein-bottled type}
        Let $\F$ and $\F\uppri$ be transnormal systems of Klein-bottled type on Riemannian manifolds $M$ and $M\uppri$, respectively. They are equivalent if and only if there exists a DR-foil $F_{t}\in \F$, a DR-foil $F\uppri_{t\uppri}\in\F\uppri$, and two homeomorphisms $\varphi_1,\varphi_2:F_{t}\to F\uppri_{t\uppri}$ smoothly isotopic to each other, such that 
        \begin{equation}\label{equ: equivalence relation between Inv times Inv}
            \left\{
                \begin{aligned}
                    &\tilde{\sigma}\uppri_1=\varphi_1\circ\tilde{\sigma}_1\circ\varphi_1^{-1},\\
                    &\tilde{\sigma}\uppri_2=\varphi_2\circ\tilde{\sigma}_2\circ\varphi_2^{-1},
                \end{aligned}
            \right.
            \quad \text{or}\quad 
            \left\{
                \begin{aligned}
                    &\tilde{\sigma}\uppri_1=\varphi_1\circ\tilde{\sigma}_2\circ\varphi_1^{-1},\\
                    &\tilde{\sigma}\uppri_2=\varphi_2\circ\tilde{\sigma}_1\circ\varphi_2^{-1},
                \end{aligned}
            \right.
        \end{equation}
        holds. Here, $\tilde{\sigma}_1(x)$ and $\tilde{\sigma}_2(x)$ are the points at which the normal geodesic starting from $x\in F_{t}$ in opposite directions first returns to $F_{t}$, whereas $\tilde{\sigma}\uppri_1$ and $\tilde{\sigma}\uppri_2$ are defined similarly on $F\uppri_{t\uppri}$. 
    \end{prop}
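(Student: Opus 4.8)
The plan is to run the same covering-space reduction that proves Proposition~\ref{prop: equiv of toric type}, with the infinite cyclic deck group replaced by the infinite dihedral group governing the Klein-bottled type. By Proposition~\ref{prop: non-essential to essential}~(iii) the essential cover of $(M,\F)$ is a cylindrical system $(\mR\times F,\tilde{\F})$ with slices $\tilde{F}_s=\{s\}\times F$, whose deck group is $G=\langle\sigma_1,\sigma_2\rangle\cong D_{\infty}$, where $\sigma_1(s,x)=(-s,\tilde{\sigma}_1(x))$ and $\sigma_2(s,x)=(t_0-s,\tilde{\sigma}_2(x))$, and $\tilde{\sigma}_1,\tilde{\sigma}_2$ are the two fixed-point-free involutions of $F$; similarly for $(M\uppri,\F\uppri)$. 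A direct inspection of normal geodesics, exactly as in \myeqref{equ: normal geodesic is unique through SR-foil}, shows that under the product identification of any DR-foil $F_t$ with the slice $F$ these involutions coincide with the first-return maps in the statement, and in particular are independent of the chosen DR-foil. The mirror foils $\{s=0\}$ and $\{s=t_0/2\}$ are precisely the slices covering the two SR-foils, and the two $G$-conjugacy classes of reflections in $D_\infty$ correspond bijectively to these two SR-foils.

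For the forward implication, suppose $\varphi:M\to M\uppri$ realizes the equivalence. By Proposition~\ref{prop: universal property of proper covering} it lifts to a diffeomorphism $\Phi:\mR\times F\to\mR\times F\uppri$ of the form $\Phi(s,x)=(h(s),\Phi_s(x))$ whose conjugation action $g\mapsto\Phi g\Phi^{-1}$ is an isomorphism $G\to G\uppri$ sending the generating pair of reflections $\{\sigma_1,\sigma_2\}$ to a generating pair of reflections of $G\uppri$. Since any two adjacent reflections generating $D_\infty$ can be carried, by an inner automorphism, to the standard generating pair in one of its two orders, after composing $\Phi$ with a suitable deck transformation of $\mR\times F\uppri$ (which does not alter the descended map $\varphi$) we may arrange $\Phi\circ\sigma_i=\sigma\uppri_i\circ\Phi$ for $i=1,2$; the opposite order yields the swapped alternative in \myeqref{equ: equivalence relation between Inv times Inv}. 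The first relation forces $h(0)=0$ with $h$ odd near $0$ and, read on the slice $s=0$, gives $\tilde{\sigma}\uppri_1=\Phi_0\circ\tilde{\sigma}_1\circ\Phi_0^{-1}$; the second forces $h(t_0/2)=t\uppri_0/2$ and, read on the slice $s=t_0/2$, gives $\tilde{\sigma}\uppri_2=\Phi_{t_0/2}\circ\tilde{\sigma}_2\circ\Phi_{t_0/2}^{-1}$. Setting $\varphi_1=\Phi_0$ and $\varphi_2=\Phi_{t_0/2}$, the smoothness of $s\mapsto\Phi_s$ exhibits $\varphi_1$ and $\varphi_2$ as smoothly isotopic, which is the required data.

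For the converse, assume $\varphi_1,\varphi_2:F\to F\uppri$ are smoothly isotopic and satisfy the first alternative of \myeqref{equ: equivalence relation between Inv times Inv} (the swapped case following by relabelling $\sigma\uppri_1,\sigma\uppri_2$). I would build a $G$-equivariant diffeomorphism $\Phi:\mR\times F\to\mR\times F\uppri$ that descends to the sought equivalence. Take the linear $h(s)=\tfrac{t\uppri_0}{t_0}s$, which intertwines the two $D_\infty$-actions on the foil spaces. On the fundamental domain $[0,t_0/2]\times F$ define $\Phi_s$ from a smooth isotopy between $\varphi_1$ and $\varphi_2$, reparametrized by a cutoff function so that $\Phi_s\equiv\varphi_1$ for $s$ near $0$ and $\Phi_s\equiv\varphi_2$ for $s$ near $t_0/2$, and then extend $\Phi$ to all of $\mR\times F$ by imposing $\Phi\circ\sigma_i=\sigma\uppri_i\circ\Phi$. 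The conjugation relations guarantee that this equivariant extension is well defined and continuous across the two mirror foils, so $\Phi$ descends to a homeomorphism $M\to M\uppri$ carrying $\F$ to $\F\uppri$.

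The main obstacle is upgrading this to a \emph{smooth} diffeomorphism, i.e.\ ensuring $\Phi$ is $C^\infty$ across the mirror foils $\{s=0\}$ and $\{s=t_0/2\}$, equivalently across the two SR-foils of $M$ where the normal disk bundles are glued. This is exactly where the stationary-near-the-endpoints normalization of the isotopy pays off. Near $s=0$ the equivariant extension reads $\Phi(s,x)=\sigma\uppri_1\circ\Phi\circ\sigma_1(s,x)$, giving $\Phi_{-s}=\tilde{\sigma}\uppri_1\circ\Phi_s\circ\tilde{\sigma}_1^{-1}$; since there $\Phi_s\equiv\varphi_1$ and the relation $\tilde{\sigma}\uppri_1=\varphi_1\circ\tilde{\sigma}_1\circ\varphi_1^{-1}$ yields $\tilde{\sigma}\uppri_1\circ\varphi_1\circ\tilde{\sigma}_1^{-1}=\varphi_1$, the slice map is \emph{constant} in $s$ across $s=0$, so $\Phi$ is smooth there (and $h$ is smooth since it is linear); the identical computation at $s=t_0/2$ uses $\varphi_2$ and $\tilde{\sigma}\uppri_2$. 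Hence $\Phi$ is a $G$-equivariant diffeomorphism, it descends to a diffeomorphism $M\to M\uppri$ realizing the equivalence of $\F$ and $\F\uppri$, and we are done. Because the foils are closed surfaces, the homeomorphisms $\varphi_i$ may be taken smooth within their common isotopy class, so the whole construction remains in the smooth category.
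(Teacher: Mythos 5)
Your proposal is correct and follows essentially the same route as the paper: both directions pass to the essential cover via Proposition \ref{prop: non-essential to essential} (iii), lift $\varphi$ to a fiber-preserving $\Phi(s,x)=(h(s),\Phi_s(x))$ using Proposition \ref{prop: universal property of proper covering}, normalize so that the two generating foil-reflections correspond (in one of the two orders), read off \myeqref{equ: equivalence relation between Inv times Inv} on the slices over the SR-foils, and conversely build an equivariant $\Phi$ from a cutoff-reparametrized isotopy between $\varphi_1$ and $\varphi_2$. Your added remarks on the $D_\infty$ conjugacy normalization and on smoothness across the mirror foils are just more explicit versions of steps the paper treats implicitly.
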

    \begin{rmk}
        According to the properties of transnormal systems of Klein-bottled type in Section \ref{sec: covering and lifting}, if $\nu$ is a unit vector field normal to a DR-foil $F_t$ and the normal geodesic $\gamma(t)$ with $\gamma(0)=x\in F_t, \dot{\gamma}(0)=\nu(x)$ first returns to $F_t$ at $\gamma(t_1)=\tilde{\sigma}_1(x)\in F_t$, then $\gamma(t)$ passes through an SR-foil at $\gamma(\frac{t_1}{2})$, and the tangent vector $\dot{\gamma}(t_1)$ is $-\nu\big(\gamma(t_1)\big)$. Therefore, $\tilde{\sigma}_1$ is an involution. Furthermore, the normal geodesic starting with tangent vector $-\nu$ will pass through the other SR-foil before returning to $F_t$.
    \end{rmk}
    \begin{proof}
        Assume a diffeomorphism $\varphi:M\to M\uppri$ induces the equivalence between $\F=\{F_{t}\}_{t\in[0,T]}$ and $\F\uppri=\{F\uppri_{t}\}_{t\in[0,T\uppri]}$. By Propositions \ref{prop: non-essential to essential} and \ref{prop: universal property of proper covering}, and a process analogous to Proposition \ref{prop: equiv of toric type}, we obtain a diffeomorphism $\Phi:(s,x)\mapsto (h(s),\Phi_s(x))$ between the essential covers $\mR\times F$ and $\mR\times F\uppri$ induced by $\varphi$. Let 
        \begin{equation*}
            \left\{
                \begin{aligned}
                    &\sigma_1(s,x)=(-s,\sigma_{\scriptscriptstyle 1,F}(x)),\\
                    &\sigma_2(s,x)=(2T-s,\sigma_{\scriptscriptstyle 2,F}(x)),
                \end{aligned}
            \right.
        \end{equation*}
        be the foil-reflections generating the covering transformation group of the essential cover of $(M,\F)$, and 
        \begin{equation*}
            \left\{
                \begin{aligned}
                    &\sigma\uppri_1(s,x\uppri)=(-s,\sigma\uppri_{\scriptscriptstyle 1,F}(x\uppri)),\\
                    &\sigma\uppri_2(s,x\uppri)=(2T\uppri-s,\sigma\uppri_{\scriptscriptstyle 2,F}(x\uppri)),
                \end{aligned}
            \right.
        \end{equation*}
        similarly for $(M\uppri,\F\uppri)$. Without loss of generality, we assume $h(0)=0$, $h\uppri(s)\ge 0$, and then 
        \begin{equation}\label{equ: condition of ess induced to below in Klein}
            \left\{
                \begin{aligned}
                    & h(s)+h(-s)=0,\quad h(T+s)+h(T-s)=2T\uppri,\\
                    & \sigma_{\scriptscriptstyle 1,F\uppri}\uppri=\Phi_{-s}\circ\sigma_{\scriptscriptstyle 1,F}\circ\Phi_{s}^{-1},\quad \sigma_{\scriptscriptstyle 2,F\uppri}\uppri=\Phi_{2T-s}\circ\sigma_{\scriptscriptstyle 2,F}\circ\Phi_{s}^{-1}.
                \end{aligned}
            \right.
        \end{equation}
        \par 
        Identifying $F$ with $F_{t}$, the transformations $\tilde{\sigma}_1$ and $\tilde{\sigma}_2$ coincide with $\sigma_{\scriptscriptstyle 1,F}$ and $\sigma_{\scriptscriptstyle 2,F}$, respectively. The conclusion follows with $\varphi_1=\Phi_0$ and $\varphi_2=\Phi_{T}$.
        \par 
        The proof of the converse part is nearly identical to that of Proposition \ref{prop: equiv of toric type}, except that the function $\Phi_s$ is now defined as 
        \begin{equation*}
            \Phi_s(x)=\left\{ 
                \begin{aligned}
                    &\Psi(\chi(s),x),\quad &s\in[0,T],\\
                    &\sigma_{\scriptscriptstyle 2,F\uppri}\uppri\circ\Phi_{2T-s}\circ\sigma_{\scriptscriptstyle 2,F}(x),\quad &s\in[T,2T],\\
                    &\big(\sigma_{\scriptscriptstyle 2,F\uppri}\uppri\circ\sigma_{\scriptscriptstyle 1,F\uppri}\uppri\big)^{k}\circ\Phi_{s-2kT}\circ\big(\sigma_{\scriptscriptstyle 1,F}(x)\circ\sigma_{\scriptscriptstyle 2,F}\big)^{k}(x),\quad &s\in[2kT,2(k+1)T],
                \end{aligned}    
            \right.
        \end{equation*}
        which satisfies the equations in \myeqref{equ: condition of ess induced to below in Klein}. 
    \end{proof}
    \par 
    For closed surface $S$, denote by $\operatorname{Inv}_0(S)$ the set of all the fixed-point-free involutions of $S$, that is, 
    \[
        \inv{S}:=\{\sigma\in \operatorname{Homeo}(S):\,\sigma^2=id\ \text{and $\sigma$ has no fixed point}\}.
    \]
    Note that $\tilde{\sigma}_1$ and $\tilde{\sigma}_2$ are both in $\inv{F_t}$. Combining Proposition \ref{prop: equiv of Klein-bottled type} and the properties of transnormal systems of Klein-bottled type, we obtain
    \begin{thm}\label{thm: equiv of tran sys of klein-bottled type}
        Let $\mathcal{E}_{\text{kl}}(S)$ collect all the equivalence classes of transnormal systems of Klein-bottled type on compact 3-manifolds with DR-foils homeomorphic to $S$. Then $\mathcal{E}_{\text{kl}}(S)$ is in one-to-one correspondence with the set
        \begin{equation*}
            \inv{S}\times\inv{S}\big/\sim,
        \end{equation*} 
        where $\sim$ is an equivalence relation defined as follows: $(\tilde{\sigma}_1,\tilde{\sigma}_2)\sim(\tilde{\sigma}\uppri_1,\tilde{\sigma}\uppri_2)$ if and only if \myeqref{equ: equivalence relation between Inv times Inv} holds for some $\varphi_1$ and $\varphi_2$ isotopic to each other. 
    \end{thm}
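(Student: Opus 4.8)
The plan is to exhibit an explicit bijection $\Theta\colon\mathcal{E}_{\text{kl}}(S)\to\inv{S}\times\inv{S}\big/\!\sim$ and to deduce its properties from Proposition \ref{prop: equiv of Klein-bottled type} together with the realization construction underlying Proposition \ref{prop: non-essential to essential}(iii). Given a transnormal system $\F$ of Klein-bottled type with DR-foils homeomorphic to $S$, I fix a DR-foil $F_t$, an identification $F_t\cong S$, and a unit normal direction, and set $\Theta([\F])$ to be the class of the pair $(\tilde{\sigma}_1,\tilde{\sigma}_2)$ of first-return involutions from the statement. The remark preceding the proof guarantees that each $\tilde{\sigma}_i$ is a fixed-point-free involution, so indeed $(\tilde{\sigma}_1,\tilde{\sigma}_2)\in\inv{S}\times\inv{S}$. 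It then remains to prove that $\Theta$ is well defined, injective, and surjective.

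Well-definedness and injectivity are largely a repackaging of Proposition \ref{prop: equiv of Klein-bottled type}. First I would verify that the class $\Theta([\F])$ is independent of the auxiliary choices: replacing $F_t$ by another DR-foil $F_{t'}$, or the identification $F_t\cong S$ by an isotopic one, conjugates both return involutions by a diffeomorphism isotopic to the identity, which is precisely the first alternative in \myeqref{equ: equivalence relation between Inv times Inv} with $\varphi_1=\varphi_2$; and reversing the unit normal interchanges $\tilde{\sigma}_1$ and $\tilde{\sigma}_2$, which is the second alternative with $\varphi_1=\varphi_2=\mathrm{id}$. The forward implication of Proposition \ref{prop: equiv of Klein-bottled type} shows that equivalent systems yield $\sim$-equivalent pairs, so $\Theta$ is well defined, while its converse implication shows that $\sim$-equivalent pairs come from equivalent systems, giving injectivity.

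The surjectivity is the heart of the matter, and it is a realization problem: given arbitrary $\tilde{\sigma}_1,\tilde{\sigma}_2\in\inv{S}$, I must construct a Klein-bottled transnormal system whose return involutions are $(\tilde{\sigma}_1,\tilde{\sigma}_2)$. By Proposition \ref{prop: non-essential to essential}(iii) it is enough to equip $\mR\times S$ with a complete metric for which the slices $\{s\}\times S$ form a cylindrical transnormal system and for which $\sigma_1(s,x)=(-s,\tilde{\sigma}_1 x)$ and $\sigma_2(s,x)=(2T-s,\tilde{\sigma}_2 x)$ are isometric foil-reflections; the quotient $\mR\times S\big/\langle\sigma_1,\sigma_2\rangle$ is then the sought manifold $\mathcal{K}_{\tilde{\sigma}_1;\tilde{\sigma}_2}$ of \myeqref{equ: def of K_sigma12}, which is compact since $S$ is closed. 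Seeking the metric in the form $\mathrm{d}s^2+\metr_s$, the isometry requirements become $\tilde{\sigma}_1^*\metr_{-s}=\metr_s$ and $\tilde{\sigma}_2^*\metr_{2T-s}=\metr_s$ for all $s$.

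The main obstacle is that $G=\langle\sigma_1,\sigma_2\rangle$ is typically infinite, since $\sigma_2\circ\sigma_1$ already generates a copy of $\mZ$, so one cannot simply average a metric over $G$. I would bypass this by prescribing the metric only on the slab $[0,T]\times S$ and propagating it by the two relations above. Concretely, averaging an arbitrary metric on $S$ over the finite group $\langle\tilde{\sigma}_1\rangle\cong\mZ_2$ produces a $\tilde{\sigma}_1$-invariant metric $\metr_0$, and likewise a $\tilde{\sigma}_2$-invariant metric $\metr_T$; choosing a smooth family $\metr_s$ with $\metr_s\equiv\metr_0$ near $s=0$ and $\metr_s\equiv\metr_T$ near $s=T$ makes the even reflections dictated by the two relations smooth across the walls, and since $G$ acts freely (as in the remark after Proposition \ref{prop: non-essential to essential}) the relations extend $\metr_s$ consistently to all of $\mR$, giving a genuine $G$-invariant metric. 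One technical point to clear up is that members of $\inv{S}$ are a priori only homeomorphisms while the averaging needs smooth involutions; in dimension two any such involution is isotopic to a smooth one, and the isotopy is absorbed by $\sim$, so the realized pair equals $(\tilde{\sigma}_1,\tilde{\sigma}_2)$ in $\inv{S}\times\inv{S}\big/\!\sim$.
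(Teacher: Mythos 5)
Your proposal is correct and follows essentially the same route as the paper, which obtains the theorem by combining Proposition \ref{prop: equiv of Klein-bottled type} (giving well-definedness and injectivity of your map $\Theta$) with the structure results of Section \ref{sec: covering and lifting}. The only difference is that you spell out the surjectivity step that the paper leaves implicit — realizing an arbitrary pair in $\inv{S}\times\inv{S}$ by building a $\langle\sigma_1,\sigma_2\rangle$-invariant metric of the form $\mathrm{d}s^2+\metr_s$ via averaging at the walls $s=0,T$ and propagating over the infinite dihedral action — and that construction, including the smoothing of the topological involutions absorbed by $\sim$, is sound.
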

    \par 
    The equivalence class in $\inv{S}\times\inv{S}\big/\sim$ corresponding to $\tilde{\sigma}_1$ and $\tilde{\sigma}_2$ in Proposition \ref{prop: equiv of Klein-bottled type} is denoted by $\llbracket\tilde{\sigma}_1,\tilde{\sigma}_2\rrbracket$. In general, we say the corresponding transnormal system $\F$ of Klein type is induced by $\tilde{\sigma}_1$ and $\tilde{\sigma}_2$.
    \begin{ex}\label{example: Klein-bottled type when 2-sphere}
        When DR-foils are homeomorphic to $\mS^2$, it is known that any fixed-point-free involution on sphere $\mS^2$ is conjugate to the antipodal map $\sigma_{\mS^2}$. Consequently, the set $\inv{\mS^2}\times\inv{\mS^2}\big/\sim$ consists a single element $\llbracket \sigma_{\mS^2},\sigma_{\mS^2}\rrbracket$. Then, 
        \begin{align*}
            \mathcal{K}_{\sigma_{\mS^2};\sigma_{\mS^2}}\simeq& \frac{[0,1]\times\mS^2}{(0,x)\sim(0,\sigma_{\mS^2}(x)),(1,x)\sim(1,\sigma_{\mS^2}(x))}\\
            \simeq & \frac{[0,\frac{1}{2}]\times\mS^2}{(0,x)\sim(0,\sigma_{\mS^2}(x))}\cup \frac{[\frac{1}{2},1]\times\mS^2}{(1,x)\sim(1,\sigma_{\mS^2}(x))}\\
            \simeq & \mR P^3\# \mR P^3.
        \end{align*}
    \end{ex}
    \begin{ex}
        Since $\mR P^3$ admits no fixed-point-free involution (see \cite{asoh1976classification}), there exists no transnormal system of Klein-bottled type whose DR-foils are homeomorphic to $\mR P^3$.
    \end{ex}
    \par    
    When $M$ is an orientable compact 3-manifold, the essential cover with respect to $\F$ is homeomorphic to $\mR\times S_g$ for some orientable closed surface $S_g$ of genus $g$, on which the foil-reflections $\sigma_1:(t,x)\mapsto(-t,\tilde{\sigma}_1(x))$ and $\sigma_2:(t,x)\mapsto(t_0-t,\tilde{\sigma}_2(x))$ preserve orientation. It follows that $\tilde{\sigma}_1$ and $\tilde{\sigma}_2$ are orientation-reversing fixed-point-free involutions on $S_g$. The set of such equivalence classes of transnormal systems is in one-to-one correspondence with  
    \begin{equation*}
        \invre{S_g}\times\invre{S_g}\big/\sim,
    \end{equation*}
    where $\invre{S_g}$ is the orientation-reversing subset of $\inv{S_g}$. Then, a conclusion similar to Proposition \ref{prop: CPC for toric type} holds for the Klein-bottled type.
    \begin{prop}\label{prop: CPC for Klein-bottled type}
        Let $M$ be an orientable compact 3-manifold, and let $\F$ be a transnormal system of Klein-bottled type on $M$ with DR-foils homeomorphic to the orientable closed surface $S_g$ of genus $g$. 
        \begin{enumerate}[(i)]
            \item If $g\le 1$, then, up to equivlence, $\F$ is a CPC transnormal system, and meanwhile $M$ is locally isometric to $\mS^2\times\mR$, $\mR^3$, Nil geometry or Sol geometry.
            \item If $g\ge 2$, $\F$ is equivalently a CPC transnormal system if and only if $\tilde{\sigma}_1$ and $\tilde{\sigma}_2$ are orientation-reversing fixed-point-free involutions of $S_g$ satisfying that $\tilde{\sigma}_2\circ\tilde{\sigma}_1$ is periodic, and the equivalence class of $\F$ corresponds to $\llbracket\tilde{\sigma}_1,\tilde{\sigma}_2\rrbracket\in\invre{S_g}\times\invre{S_g}\big/\sim$. In this case, up to equivlence of CPC transnormal systems, $M$ is locally isometric to $\mH^2\times\mR$.   
        \end{enumerate}
    \end{prop}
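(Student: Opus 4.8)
The plan is to reduce the Klein-bottled case to the toric case already settled in Proposition \ref{prop: CPC for toric type}, exploiting Corollary \ref{coro: Klein-bottled to toric}: the composition $\tau=\sigma_2\circ\sigma_1$ of the two generating foil-reflections of the essential cover is an isometric foil-translation. On $\mR\times S_g$ one has $\sigma_1(t,x)=(-t,\tilde\sigma_1(x))$ and $\sigma_2(t,x)=(t_0-t,\tilde\sigma_2(x))$, so $\tau(t,x)=(t+t_0,\tilde\sigma_2\circ\tilde\sigma_1(x))$ and the induced toric foil-translation is $\tilde\tau=\tilde\sigma_2\circ\tilde\sigma_1$. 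Since the covering map from the essential cover to $M$ is a local isometry carrying foils to foils, $\F$ is CPC exactly when every foil $\{t\}\times S_g$ has constant principal curvatures; hence the CPC condition for the Klein-bottled system and for its associated toric system coincide, and the periodicity of $\tilde\sigma_2\circ\tilde\sigma_1$ in (ii) is precisely the periodicity of $\tilde\tau$ required in Proposition \ref{prop: CPC for toric type}(ii).

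For the forward direction of (ii), I would run the argument of Proposition \ref{prop: CPC for toric type}(ii) on the essential cover. Assuming $\F$ is CPC, the two constant principal curvatures on each $\{t\}\times S_g$ coincide by the line-bundle Poincar\'e--Hopf theorem (\cite{hopf2003differential}), so the foils are umbilic; the resulting closed conformal field together with \cite{montiel1999unicity} forces a warped-product metric $\dd{t^2}+f^2(t)\metr_{S_g}$. The new feature is that now \emph{both} $\sigma_1$ and $\sigma_2$ must be isometries. A short computation using $\sigma_i^2=\mathrm{id}$ then shows that $f$ is even, that $f(t_0-t)=f(t)$, and, crucially, that $\tilde\sigma_1$ and $\tilde\sigma_2$ are each isometries of $(S_g,\metr_{S_g})$. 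Thus $\langle\tilde\sigma_1,\tilde\sigma_2\rangle$ is a group of isometries, hence by the uniformization theorem a conformal---therefore finite---subgroup of a hyperbolic surface, so $\tilde\sigma_2\circ\tilde\sigma_1$ is periodic.

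For the converse of (ii) and for (i), the task is constructive. Since $\tilde\sigma_1,\tilde\sigma_2$ are involutions whose product is periodic, $G=\langle\tilde\sigma_1,\tilde\sigma_2\rangle$ is a finite dihedral group in $\eMCG{S_g}$. Working up to equivalence---which, by Theorem \ref{thm: equiv of tran sys of klein-bottled type}, permits replacing the gluing involutions by isotopic conjugates---I would realize $G$ as isometries of a hyperbolic metric $\metr_{S_g}$, via Nielsen realization for the finite group $G$, equivalently by uniformizing the negatively-curved quotient orbifold $S_g/G$ and pulling back. Then on $(\mR\times S_g,\dd{t^2}+\metr_{S_g})$ the foils are totally geodesic and $\sigma_1,\sigma_2$ are fixed-point-free isometries, so the system descends to a CPC transnormal system on $M$ locally isometric to $\mH^2\times\mR$. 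Part (i) is parallel: for $g=0$ the unique class is $\llbracket\sigma_{\mS^2},\sigma_{\mS^2}\rrbracket$, giving $\mR P^3\#\mR P^3$ with the product metric on $\mS^2\times\mR$ (Example \ref{example: Klein-bottled type when 2-sphere}); for $g=1$ I would split according to whether $\tilde\sigma_2\circ\tilde\sigma_1$ is periodic, reducible, or Anosov and build the CPC representative on the flat, Nil, or Sol model, mirroring Examples \ref{example: periodic homeo inducing tran sys}, \ref{example: dehn twist of torus inducing tran sys}, \ref{example: Anosov homeo of torus inducing tran sys} but now checking that both $\sigma_1,\sigma_2$, not merely $\tau$, act isometrically.

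The main obstacle is exactly this last point: unlike the toric case, where a single foil-translation had to be made isometric, here I must realize \emph{both} fixed-point-free involutions simultaneously as isometries of the model metric, while preserving fixed-point-freeness. For $g\ge 2$ this is the finite-group Nielsen realization step, together with the observation that fixed-point-freeness, being an isotopy invariant for finite-order maps, survives the realization; for $g=1$ the difficulty concentrates in the Nil and Sol cases, where $\tilde\sigma_2\circ\tilde\sigma_1$ has infinite order and $G$ is infinite dihedral, so averaging is unavailable and one must instead exhibit the two involutions explicitly inside the isometry groups of the metrics \myeqref{equ: metric of nil geometry} and \myeqref{equ: metric of Anosov of torus}.
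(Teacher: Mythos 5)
Your proposal follows essentially the same route as the paper: reduce to the toric case via $\tilde{\tau}=\tilde{\sigma}_2\circ\tilde{\sigma}_1$ (Corollary \ref{coro: Klein-bottled to toric}), run the umbilic/warped-product argument of Proposition \ref{prop: CPC for toric type} on the essential cover for the forward direction of (ii), and realize the finite group $\langle\tilde{\sigma}_1,\tilde{\sigma}_2\rangle$ (resp.\ the explicit spherical/flat/Nil/Sol models for $g\le 1$) isometrically for the converse. The one step you flag as the main obstacle but do not carry out --- verifying that both involutions, not merely their composition, act isometrically on the metrics \myeqref{equ: metric of nil geometry} and \myeqref{equ: metric of Anosov of torus} --- is exactly the explicit normalization $(\tilde{\sigma}_1,\tilde{\sigma}_2)=(h\circ\sigma_{-}\circ h^{-1},\sigma_{-})$ and case-by-case matrix computation that the paper's proof performs.
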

    \begin{proof}
        Assume that $\F$ is induced by $(\tilde{\sigma}_1,\tilde{\sigma}_2)\in \invre{S_g}\times\invre{S_g}$. 
        \par 
        If $g=0$, then $\F$ is equivalently the same as the one in Example \ref{example: Klein-bottled type when 2-sphere}. Recalling the CPC transnormal system of toric type with DR-foils homeomorphic to $\mS^2$ in the proof of Proposition \ref{prop: CPC for toric type} and considering $\bar{\sigma}:\mS^1\times\mS^2\to\mS^1\times\mS^2$ defined by $\bar{\sigma}(e^{i\theta},x)=(e^{-i\theta},\sigma_{\mS^2}(x))$, we obtain a CPC transnormal system of Klein-bottled type on $\mS^1\times\mS^2\big/\langle\bar{\sigma}\rangle$ by Corollary \ref{coro: Klein-bottled to toric}. In other words, $\mR P^3\# \mR P^3$ can be equipped with a Riemannian metric, locally isometric to the product space $\mS^2\times\mR$, such that it admits a CPC transnormal system of Klein-bottled type. 
        \par 
        If $g=1$, it follows from the classification of involutions (\cite{asoh1976classification}) that any orientation-reversing fixed-point-free involution on torus is conjugate to the unique involution $\sigma_{-}$ defined as 
        \begin{equation}\label{equ: sigma reverse orientation on torus}
            \sigma_{-}:\mR^2/\mZ^2\to\mR^2/\mZ^2,\quad [x,y]\mapsto [x+\frac{1}{2},-y].
        \end{equation}
        According to Proposition \ref{prop: equiv of Klein-bottled type}, it suffices to consider the cases $(\tilde{\sigma}_1,\tilde{\sigma}_2)=(h\circ\sigma_{-}\circ h^{-1},\sigma_{-})$, with $h(x,y)=(h_{11}x+h_{12}y,h_{21}x+h_{22}y)$ and $(h_{ij})\in\operatorname{SL}(2,\mZ)$. By Corollary \ref{coro: Klein-bottled to toric}, $(M,\F)$ is covered by $(\bar{M},\bar{\F})$ as a two-sheeted covering with $\bar{\F}$ being a transnormal system of toric type induced by $\tilde{\tau}=\tilde{\sigma}_2\circ\tilde{\sigma}_1$. Directly, 
        \begin{align*}
            \tilde{\tau}(x,y)=&\,\sigma_{-}\circ h\circ\sigma_{-}\circ h^{-1}(x,y)\\
            =&\,\bigl((h_{11}h_{22}+h_{12}h_{21})x-2h_{11}h_{12}y+\frac{1}{2}(h_{11}+1),-2h_{21}h_{22}x+(h_{11}h_{22}+h_{12}h_{21})y-\frac{1}{2}h_{21}\bigr). 
        \end{align*}
        In the following, we discuss each case separately, depending on whether $\tilde{\tau}$ is isotopically periodic, reducible, or Anosov, which is determined by the matrix 
        \[
            A=\left(
                \begin{array}{cc}
                    h_{11}h_{22}+h_{12}h_{21} & -2h_{11}h_{12}\\
                    -2h_{21}h_{22} & h_{11}h_{22}+h_{12}h_{21}
                \end{array}
            \right)\in \operatorname{SL}(2,\mZ). 
        \] 
        For each case, we verify that the corresponding foil-reflection is isometric under the Riemannian metric described in the proof of Proposition \ref{prop: CPC for toric type}. 
        \par 
        If $\tilde{\tau}$ is isotopic to a periodic homeomorphism, $A$ is periodic and direct computation yields $A=\pm I$. Then, the foil-reflection $\sigma_2(t,x,y)=(-t,\sigma_{-}(x,y))$ is obviously isometric under the corresponding metric. If $\tilde{\tau}$ is isotopic to a reducible homeomorphism, we find that 
        \[
            A=\pm \left(
                \begin{array}{cc}
                    1 & 2k\\
                    0 & 1
                \end{array}
            \right) \ \text{or}\ \pm \left(
                \begin{array}{cc}
                    1 & 0\\
                    2k & 1
                \end{array}
            \right)
        \]
        for some $k\in\mZ\backslash\{0\}$, and then $\sigma_2$ is also isometric under the corresponding metric. As for the case that $\tilde{\tau}$ is isotopic to an Anosov homeomorphism, through some calculations, we obtain $h_{11}h_{12}h_{21}h_{22}\neq 0$ and the eigenvectors of $A$ in \myeqref{equ: eigenvector of anosov} are
        \[
            \pp{\xi_1}=\sqrt{h_{11}h_{12}}\pp{x}+\sqrt{h_{21}h_{22}}\pp{y},\quad \pp{\xi_2}=\sqrt{h_{11}h_{12}}\pp{x}-\sqrt{h_{21}h_{22}}\pp{y}. 
        \]
        The metric in \myeqref{equ: metric of Anosov of torus} is exactly
        \[
            \metr=\ln^2\lambda\dd{t^2}+\lambda^{-2t}\left(\frac{\dd{x}}{2\sqrt{h_{11}h_{12}}}+\frac{\dd{y}}{2\sqrt{h_{21}h_{22}}}\right)+\lambda^{2t}\left(\frac{\dd{x}}{2\sqrt{h_{11}h_{12}}}-\frac{\dd{y}}{2\sqrt{h_{21}h_{22}}}\right),
        \]
        under which $\sigma_2$ is also isometric. Hence, we complete the proof for the case $g=1$. 
        \par 
        When $g\ge 2$, if $\tilde{\sigma}_2\circ\tilde{\sigma}_1$ is periodic, the group $G$ generated by $\tilde{\sigma}_1$ and $\tilde{\sigma}_2$ is finite. We obtain a $G$-invariant hyperbolic metric on $S_g$ and a CPC transnormal system of Klein-bottled type as in Proposition \ref{prop: CPC for toric type}. Conversely, if $\F$ is a CPC transnormal system of Klein-bottled type, the foils of the related transnormal system $\bar{\F}$ of toric type in Corollary \ref{coro: Klein-bottled to toric} also have constant principal curvatures, and $\bar{\F}$ is induced by $\tilde{\sigma}_2\circ\tilde{\sigma}_1$. The remainder of the proof follows exactly the same procedure as in the proof of Proposition \ref{prop: CPC for toric type}.
    \end{proof}

    \subsection{Transnormal systems of spherical type}\label{subsec: 3d and spherical type}
    We now suppose that $\F$ is a transnormal system of spherical type on Riemannian manifold $M$. We have known from Theorem \ref{thm: intro: four equivlent conditions} that it induces a linear double disk bundle decomposition $(\mathcal{D}_1\to L_1,\mathcal{D}_2\to L_2,\phi:\partial \mathcal{D}_1\to \partial \mathcal{D}_2)$ on $M$. Precisely, $\mathcal{P}_k:\mathcal{D}_k\to L_k$ is a linear $d_k$-disk bundle over $L_k$ for $k=1,2$, where $d_k$ is the codimension of $L_k$ in $M$; $M$ is obtained by attaching $\mathcal{D}_1$ and $\mathcal{D}_2$ along their common boundary by the diffeomorphism $\phi:\partial \mathcal{D}_1\to \partial \mathcal{D}_2$. Moreover, when $\F$ is of spherical type, both $d_1$ and $d_2$ are greater than one. Suppose that $\F\uppri$ is another transnormal system of spherical type on $M\uppri$ with each notation differing by a prime superscript. We provide a sufficient condition for determining the equivalence of two transnormal systems of spherical type. 
    \begin{prop}\label{prop: equiv of spherical type}
        If there exist bundle isomorphisms $\varphi_k$ between linear disk bundles $\mathcal{D}_k$ and $\mathcal{D}\uppri_k$ for $k=1,2$ such that $\phi\uppri$ is smoothly isotopic to $\left.\varphi_2\circ\phi\circ\varphi_1^{-1}\right|_{\partial\mathcal{D}\uppri_1}$, then $\F$ and $\F\uppri$ are equivalent transnormal systems. 
    \end{prop}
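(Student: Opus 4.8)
The plan is to assemble the equivalence $\Phi\colon M\to M\uppri$ by hand from the double disk bundle data, letting $\Phi=\varphi_1$ on $\mathcal{D}_1$ and a slightly modified version of $\varphi_2$ on $\mathcal{D}_2$, with the modification chosen so that the two halves agree across the gluing and so that every foil is sent to a foil.

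Recall first that, inside $\mathcal{D}_k$, the foils of $\F$ are precisely the zero section $L_k$ together with the sphere subbundles $S_r(\mathcal{D}_k)=\{v\in\mathcal{D}_k:|v|=r\}$ for $0<r\le 1$, and that $\partial\mathcal{D}_1$ and $\partial\mathcal{D}_2$ are glued by $\phi$ into the single middle regular foil; the analogous statement holds for $\F\uppri$. Since each $\varphi_k$ is a bundle isomorphism of linear disk bundles, it is linear and norm-preserving on every fiber, hence carries $L_k$ to $L\uppri_k$ and each $S_r(\mathcal{D}_k)$ to $S_r(\mathcal{D}\uppri_k)$. In particular $\varphi_1$ is already foil-preserving on $\mathcal{D}_1$, and the only thing that can fail is a matching condition on the gluing locus.

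The matching condition is explicit: for $\varphi_1$ and a replacement $\tilde\varphi_2$ to descend to a well-defined map on $M$, one needs $\tilde\varphi_2|_{\partial\mathcal{D}_2}=\phi\uppri\circ\varphi_1\circ\phi^{-1}$, whereas the honest restriction satisfies $\varphi_2|_{\partial\mathcal{D}_2}=(\varphi_2\circ\phi\circ\varphi_1^{-1})\circ\varphi_1\circ\phi^{-1}$. By hypothesis $\phi\uppri$ and $\varphi_2\circ\phi\circ\varphi_1^{-1}$ are joined by a smooth isotopy $G_s\colon\partial\mathcal{D}\uppri_1\to\partial\mathcal{D}\uppri_2$ with $G_0=\phi\uppri$ and $G_1=\varphi_2\circ\phi\circ\varphi_1^{-1}$; setting $\psi_s:=G_s\circ\varphi_1\circ\phi^{-1}$ gives an isotopy of maps $\partial\mathcal{D}_2\to\partial\mathcal{D}\uppri_2$ interpolating between the desired boundary map $\psi_0=\phi\uppri\circ\varphi_1\circ\phi^{-1}$ and the actual one $\psi_1=\varphi_2|_{\partial\mathcal{D}_2}$. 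The crux of the argument, and the step that needs the most care, is to absorb this isotopy into a collar of $\partial\mathcal{D}_2$ without destroying the foil-preserving property. The key point is that a radial collar $\{1-\epsilon\le r\le 1\}$ of $\partial\mathcal{D}_2$ is itself saturated by foils, namely the subbundles $S_r(\mathcal{D}_2)$. Accordingly I would define $\tilde\varphi_2$ to equal $\varphi_2$ on $\{r\le 1-\epsilon\}$ and, on each sphere subbundle $S_r(\mathcal{D}_2)$ with $1-\epsilon\le r\le 1$, to act as $\psi_{\beta(r)}$ transported by radial scaling, where $\beta$ is a smooth function with $\beta\equiv 1$ near $r=1-\epsilon$ and $\beta\equiv 0$ near $r=1$. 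Then $\tilde\varphi_2$ matches $\varphi_2$ smoothly where the collar meets the interior (there $\psi_{\beta(r)}=\psi_1=\varphi_2|_{\partial\mathcal{D}_2}$), restricts to the required map $\psi_0$ on $\partial\mathcal{D}_2$, and sends each $S_r(\mathcal{D}_2)$ to $S_r(\mathcal{D}\uppri_2)$, so it is a foil-preserving diffeomorphism $\mathcal{D}_2\to\mathcal{D}\uppri_2$.

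With this in hand, $\Phi$ defined as $\varphi_1$ on $\mathcal{D}_1$ and $\tilde\varphi_2$ on $\mathcal{D}_2$ glues to a smooth diffeomorphism $M\to M\uppri$, since the boundary identity $\tilde\varphi_2\circ\phi=\phi\uppri\circ\varphi_1$ makes the two pieces agree on the common foil. By construction $\Phi$ maps $L_1,L_2$ to $L\uppri_1,L\uppri_2$ and every sphere subbundle (including the glued middle foil) to a sphere subbundle, hence carries each foil of $\F$ to a foil of $\F\uppri$; this is exactly the equivalence required by Definition \ref{def: equivalence of tran sys}. The only genuinely delicate point is the collar construction: one must spread the isotopy over a foil-saturated collar so that smoothness, bijectivity, and the foil-preserving property all hold at once, while the rest is routine bookkeeping with the disk bundle decomposition.
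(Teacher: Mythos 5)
Your proposal is correct and takes essentially the same approach as the paper: the paper likewise absorbs the smooth isotopy between $\phi\uppri$ and $\left.\varphi_2\circ\phi\circ\varphi_1^{-1}\right|_{\partial\mathcal{D}\uppri_1}$ into a foil-saturated radial collar (of $\partial\mathcal{D}_1$ rather than $\partial\mathcal{D}_2$, an immaterial choice of side) to correct one of the two bundle isomorphisms, so that the corrected pair glues to a foil-preserving diffeomorphism $M\to M\uppri$.
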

    \begin{proof}
        Since $\varphi_1:\mathcal{D}_k\to\mathcal{D}\uppri_k$ is a bundle isomorphism, there is a neighborhood $C$ (resp. $C\uppri$) of $\partial\mathcal{D}_1$ (resp. $\partial\mathcal{D}\uppri_1$) diffeomorphic to $\partial\mathcal{D}_1\times (1-\varepsilon,1]$ (resp. $\partial\mathcal{D}\uppri_1\times (1-\varepsilon,1]$), such that $\varphi_1$ restricted to $C$ maps $(r,y)\in C$ to $\big(r,\left.\varphi_1\right|_{\partial\mathcal{D}_1}(y)\big)\in C\uppri$. Without loss of generality, assume $\phi\uppri=\left.\varphi_2\circ\phi\circ\varphi_1^{-1}\right|_{\partial\mathcal{D}\uppri_1}\circ h^{-1}$ with $h:\partial\mathcal{D}\uppri_1\to \partial\mathcal{D}\uppri_1$ smoothly isotopic to identity. Define $\Phi_1:\mathcal{D}_1\to \mathcal{D}\uppri_1$ as
        \begin{equation*}
            \Phi_1(x)=\left\{
                \begin{aligned}
                    &\varphi_1(x),\quad& x\in \mathcal{D}_1-C,\\
                    &\left(r,\Psi_r(y)\right),\quad& x=(r,y)\in C,
                \end{aligned}
            \right. 
        \end{equation*}
        where $\{\Psi_r\}_{r\in[1-\varepsilon,1]}$ is the smooth isotopy between $\left.\varphi_1\right|_{\partial \mathcal{D}_1}$ and $h\circ \left.\varphi_1\right|_{\partial \mathcal{D}_1}$. Obviously, $\Phi_1$ maps foils to foils, and $\phi\uppri=\left.\varphi_2\circ\phi\circ\Phi_1^{-1}\right|_{\partial\mathcal{D}\uppri_1}$. Then, $(\Phi_1,\varphi_2)$ forms a diffeomorphism from $M$ to $M\uppri$, and the equivalence between $\F$ and $\F\uppri$ follows. 
    \end{proof}
    \par 
    When $M$ is a compact 3-manifold and $\F$ is of spherical type, each S-foil is either a single point or a circle. If the S-foil $L_1$ is a single point, $\mathcal{D}_1$ is diffeomorphic to a $3$-disk and the linear disk bundle $\mathcal{P}_1:\mathcal{D}_1\to L_1$ is unique up to linear bundle isomorphism. At that time, DR-foils are all diffeomorphic to the $2$-sphere. If $L_1$ is a circle, the linear disk bundle $\mathcal{P}_1:\mathcal{D}_1\to L_1$ have two possible structures up to linear bundle isomorphism: $\mathcal{D}_1$ is diffeomorphic to either a solid torus or a solid Klein bottle, depending on whether the bundle is trivial or twisted. In these cases, the DR-foils are all diffeomorphic to the torus or the Klein bottle, respectively. A similar discussion applies to the other S-foil $L_2$. Therefore, these two linear disk bundles must be isomorphic to each other. 
    \par 
    \paragraph{Case 1: $L_1$ and $L_2$ are both single points.} $M$, obtained by gluing two $3$-disks, is diffeomorphic to the $3$-sphere. It follows easily from Proposition \ref{prop: equiv of spherical type} that there exists a unique equivalence class of transnormal systems of spherical type. Up to equivalence, this transnormal system is induced by an $\operatorname{SO}(3,\mR)$ action on the standard sphere $\mS^3$. 
    \par 
    \paragraph{Case 2: $L_1$ and $L_2$ are both circles, while $\mathcal{D}_1$ and $\mathcal{D}_2$ are both solid tori.} Gluing $\mathcal{D}_1$ and $\mathcal{D}_2$ along their common boundary, we obtain the well-known lens space $L(p,q)$ for coprime $p,q\in\mZ$. 
    \par 
    \begin{rmk}\label{rmk: obtain lens space by pqst}
        In order to discuss precisely the equivalence classes of transnormal systems in this case, we briefly introduce the significance of $p$ and $q$ in the lens space $L(p,q)$ (see \cite[Chap.10]{martelli2016introduction} for example). Given two solid tori $\mathcal{D}_k=\mD^2\times\mS^1$ $(k=1,2)$, we can glue them along their boundaries by the attaching homeomorphism $\phi:\partial\mathcal{D}_1\to \partial\mathcal{D}_2$. From the theory on the mapping class group of the torus, we know that the isotopy class of $\phi$ is determined by the induced isomorphism $\phi\dwst$ between the homology groups $H_1(\partial\mathcal{D}_1;\mZ)$ and $H_1(\partial\mathcal{D}_2;\mZ)$. In detail, let $m_k=\partial\mD^2\times\{y\}$ and $l_k=\{x\}\times\mS^1$ represent the generators of $H_1(\partial\mathcal{D}_k;\mZ)$, and then $\phi\dwst$ satisfies 
        \begin{equation}\label{equ: relation to obtain Lens space}
            \left\{
                \begin{aligned}
                    \phi\dwst(m_{1})&=q\cdot m_{2}+p\cdot l_{2},\\
                    \phi\dwst(l_{1})&=s\cdot m_{2}+t\cdot l_{2},
                \end{aligned}
            \right.
            \quad \text{for some}\ \left(\begin{array}{cc}
                q&p\\
                s&t
            \end{array}\right)\in \operatorname{GL}(2,\mZ).
        \end{equation}
        Up to homeomorphism, the obtained manifold depends only on $p$ and $q$, and is denoted as lens space $L(p,q)$. An important property of lens spaces is that two lens spaces $L(p,q)$ and $L(p\uppri,q\uppri)$ are homeomorphic if and only if $p\uppri=\pm\,p$ and $q\uppri\equiv \pm\, q^{\pm 1}\hspace{-4pt}\mod p$. 
    \end{rmk}
    \par 
    \begin{prop}\label{prop: every lens has only one equiv}
        Assume $\F$ and $\F\uppri$ are transnormal systems of spherical type on $L(p,q)$ and $L(p\uppri,q\uppri)$, respectively, whose DR-foils are all diffeomorphic to the torus. Then, $\F$ is equivalent to $\F\uppri$ if and only if $L(p,q)$ is homeomorphic to $L(p\uppri,q\uppri)$. 
    \end{prop}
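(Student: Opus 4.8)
The plan is to prove the two implications separately, with the reverse direction carrying essentially all of the content. The forward direction is immediate: if $\F$ and $\F\uppri$ are equivalent, then Definition \ref{def: equivalence of tran sys} supplies a diffeomorphism $M\to M\uppri$, so $L(p,q)$ and $L(p\uppri,q\uppri)$ are in particular homeomorphic.

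For the reverse direction I would exploit that $\F$ and $\F\uppri$ realize the two lens spaces as genus-one splittings $\mathcal{D}_1\cup_\phi\mathcal{D}_2$ and $\mathcal{D}\uppri_1\cup_{\phi\uppri}\mathcal{D}\uppri_2$ into solid tori, and reduce everything to a matrix computation via Proposition \ref{prop: equiv of spherical type}. The first step is to identify the homology action on the boundary torus induced by a \emph{linear bundle isomorphism} of a solid torus. Since such a map carries fibers to fibers, it preserves the meridian up to sign, while its clutching loop in $O(2)$ may wind an arbitrary integer number of times, shifting the longitude by any integer multiple of the meridian. Hence, in the meridian–longitude basis, the matrices so induced are exactly those in the subgroup
\[
    H=\left\{\begin{pmatrix} \pm 1 & n\\ 0 & \pm 1\end{pmatrix}: n\in\mZ\right\}.
\]

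The second step is to translate Proposition \ref{prop: equiv of spherical type} into this language. Recalling that isotopy classes of self-homeomorphisms of the torus are detected by their action on $H_1$ (see \cite[Chap.~2]{farb2011primer}), the Proposition guarantees $\F\cong\F\uppri$ as soon as the homology matrix $M\uppri$ of $\phi\uppri$ can be written as $QMP^{-1}$, with $M$ the matrix of $\phi$ and $P,Q\in H$; and, allowing the two solid tori to be matched in the opposite order—which is legitimate because a transnormal system does not distinguish its two S-foils—also whenever $M\uppri=QM^{-1}P^{-1}$ for some $P,Q\in H$. Writing $M=\begin{pmatrix} q & s\\ p & t\end{pmatrix}$ as in Remark \ref{rmk: obtain lens space by pqst}, a direct computation shows that left and right multiplication by $H$ send $p\mapsto\pm p$ and $q\mapsto\pm q\pmod p$, while passing to $M^{-1}$ sends the new $q$ to $\pm t$, which is $\equiv q^{-1}\pmod p$ because $qt-ps=\pm1$. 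Thus the orbit of $(p,q)$ under these operations is precisely $\{(\pm p,q\uppri): q\uppri\equiv\pm q^{\pm1}\bmod p\}$.

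Finally I would invoke the homeomorphism classification of lens spaces quoted in Remark \ref{rmk: obtain lens space by pqst}: $L(p,q)\cong L(p\uppri,q\uppri)$ forces $p\uppri=\pm p$ and $q\uppri\equiv\pm q^{\pm1}\pmod p$, which is exactly the orbit computed above. Consequently $M\uppri$ lies in the double-coset-plus-swap orbit of $M$, and Proposition \ref{prop: equiv of spherical type} yields $\F\cong\F\uppri$. The main obstacle I expect is the first step together with the bookkeeping of the swap: one must check that \emph{every} framing shift is realized by an honest linear bundle isomorphism (not merely by a homeomorphism of the solid torus), and that exchanging the two solid tori produces exactly the inversion $q\mapsto q^{-1}$, after which the various sign ambiguities must be tracked consistently so that they assemble into the relation $q\uppri\equiv\pm q^{\pm1}\pmod p$.
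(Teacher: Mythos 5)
Your proposal is correct and follows essentially the same route as the paper: both reduce the question via Proposition \ref{prop: equiv of spherical type} to a computation in $\operatorname{GL}(2,\mZ)$ with the subgroup of boundary homology matrices realized by linear bundle isomorphisms of the solid torus, and both conclude from the homeomorphism classification of lens spaces. Your explicit tracking of the solid-torus swap to realize $q\mapsto q^{-1}$ is the same mechanism the paper encodes in its second matrix identity \myeqref{equ: proof of spherical lens space 3}, only stated more transparently.
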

    \begin{proof}
        It suffices to show the `if' part. Assume, as detailed in Remark \ref{rmk: obtain lens space by pqst}, that the transnormal system $\F$ on $L(p,q)$ is constructed via an attaching map $\phi$ between the boundaries of two solid tori, and $\phi$ is isotopic to the linear homeomorphism of the torus represented by the matrix
        \begin{equation*}
            \left(\begin{array}{cc}
                q&p\\
                s&t
            \end{array}\right)\in \operatorname{GL}(2,\mZ). 
        \end{equation*}
        Similarly, the notations for $(L(p\uppri,q\uppri),\F\uppri)$ differ only by a prime superscript. Since $L(p,q)$ is homeomorphic to $L(p\uppri,q\uppri)$, we know $p\uppri=\pm\,p$ and $q\uppri\equiv \pm\, q^{\pm 1}\hspace{-4pt}\mod p$. Without loss of generality, suppose $qt-ps=q\uppri t\uppri-p\uppri s\uppri=1$. Then, we obtain
        \begin{equation}\label{equ: proof of spherical lens space 2}
            \left(\begin{array}{cc}
                q\uppri&p\uppri\\
                s\uppri&t\uppri
            \end{array}\right)=
            \left(\begin{array}{cc}
                \delta_1(q+n_1 p)&\delta_2p\\
                \delta_2(s+n_1t+n_2q+n_1n_2p)&\delta_1(t+n_2p)
            \end{array}\right),
        \end{equation}
        or
        \begin{equation}\label{equ: proof of spherical lens space 3}
            \left(\begin{array}{cc}
                q\uppri&p\uppri\\
                s\uppri&t\uppri
            \end{array}\right)=
            \left(\begin{array}{cc}
                \delta_1(t+n_2p)&\delta_2p\\
                \delta_2(s+n_1t+n_2q+n_1n_2p)&\delta_1(q+n_1 p)
            \end{array}\right),
        \end{equation}
        where $\delta_1,\delta_2\in\{1,-1\}$, $n_1,n_2\in\mZ$. On the other hand,  
        \begin{equation}\label{equ: isomorphism of solid torus}
            \varphi:\mD^2\times\mS^1\to\mD^2\times\mS^1,\quad (z_1,z_2)\mapsto(z_1z_2^{n_k},z_2^{\pm\, 1})\ \text{or}\ (\overline{z}_1z_2^{-n_k},z_2^{\pm\, 1})
        \end{equation}
        is an isomorphism of the disk bundle $\mD^2\times\mS^1$ over $\mS^1$, whose restriction to the boundary $\partial\mD^2\times\mS^1$ satisfies
        \[
            \left(\left.\varphi\right|_{\partial\mD^2\times\mS^1}\right)\dwst\left(
                \begin{array}{c}
                    m_k\\
                    l_k
                \end{array}
            \right)=
            \pm \left(
                \begin{array}{cc}
                    1&\\
                    n_k& \pm\,1
                \end{array}
            \right)\left(
                \begin{array}{c}
                    m_k\\
                    l_k
                \end{array}
            \right). 
        \]
        It follows directly that $\phi\uppri$ is isotopic to $\varphi_2\circ\phi\circ\left.\varphi_1^{-1}\right|_{\partial\mD^2\times\mS^1}$ for some appropriately chosen $n_k$. Thus, by Proposition \ref{prop: equiv of spherical type}, the proof is complete.
    \end{proof}
    \par 
    As established in \cite{mostert1957compact,neumann19683}, each lens space admits a cohomogeneity-one action by the Lie group $\mS^1\times\mS^1$. Immediately, we have
    \begin{coro}
        The transnormal system $\F$ of spherical type on $L(p,q)$ with DR-foils homeomorphic to the torus is, up to equivalence, induced by the action of the Lie group $\mS^1\times\mS^1$. 
    \end{coro}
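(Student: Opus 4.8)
The plan is to exhibit the transnormal system induced by the $\mS^1\times\mS^1$-action explicitly and then invoke the uniqueness supplied by Proposition \ref{prop: every lens has only one equiv}, so that the given $\F$ must be equivalent to it. First I would recall the remark from the introduction that any cohomogeneity-one (isometric) action decomposes the ambient manifold into its orbits, and that this orbit partition is an embedded transnormal system of codimension one. By the result of \cite{mostert1957compact,neumann19683} cited above, $L(p,q)$ carries such an action by $\mS^1\times\mS^1$; hence it suffices to verify that the resulting transnormal system is of \emph{spherical} type with DR-foils homeomorphic to the torus.

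Next I would analyze the orbit structure. Since the acting group is the two-torus and $\dim L(p,q)=3$, the principal orbits are two-dimensional; being homogeneous spaces of $\mS^1\times\mS^1$ they are tori, and these are precisely the DR-foils. The orbit space of a cohomogeneity-one action on a closed manifold is a closed interval, whose two endpoints correspond to the two singular orbits. For the torus action these singular orbits are the two core circles of the solid tori in the genus-one Heegaard splitting of $L(p,q)$; each has codimension $2>1$ and is therefore an S-foil. Matching this configuration against Table \ref{table: seven types} --- namely $N_{\text{S}}=2$, $N_{\text{SR}}=0$, finite diameter --- identifies the type as spherical, with torus DR-foils.

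Finally, having produced a transnormal system of spherical type on $L(p,q)$ with torus DR-foils that is induced by the $\mS^1\times\mS^1$-action, I would apply Proposition \ref{prop: every lens has only one equiv}: any two transnormal systems of spherical type on $L(p,q)$ with DR-foils diffeomorphic to the torus are equivalent. Consequently the given $\F$ is equivalent to the orbit foliation of the group action, which is exactly the assertion of the corollary.

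I expect the only genuine content to lie in the orbit-structure bookkeeping of the second step: confirming that the singular orbits are circles (codimension two) rather than points, so that the type is spherical and not real-projective, and that the principal orbits are tori rather than Klein bottles. This is read off directly from the explicit description of the $\mS^1\times\mS^1$-action in \cite{mostert1957compact,neumann19683}, so no new estimate is required. The corollary is therefore essentially a translation of the cited equivariant classification into the language of transnormal systems, combined with the uniqueness already established in Proposition \ref{prop: every lens has only one equiv}.
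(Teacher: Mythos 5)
Your proposal is correct and follows essentially the same route as the paper: the paper cites \cite{mostert1957compact,neumann19683} for the existence of a cohomogeneity-one $\mS^1\times\mS^1$-action on each lens space and then deduces the corollary ``immediately'' from Proposition \ref{prop: every lens has only one equiv}, exactly as you do. The only difference is that you spell out the orbit-structure verification (principal orbits are tori, singular orbits are the two core circles, hence spherical type) that the paper leaves implicit.
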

    \par 
    \paragraph{Case 3: $L_1$ and $L_2$ are both circles, while $\mathcal{D}_1$ and $\mathcal{D}_2$ are both solid Klein bottles.} $M$ is obtained by gluing two solid Klein bottles along their common boundary via an attaching map $\phi:\mathbf{K}\to\mathbf{K}$, where $\mathbf{K}$ denotes the Klein bottle. The mapping class of the Klein bottle has been studied in \cite{lickorish1963homeomorphisms,lickorish1965homeomorphisms}, where it is shown that $\MCG{\mathbf{K}}\cong\mZ_2\times\mZ_2$. A standard model for $\mathbf{K}$ is given by  
    \begin{equation}\label{equ: def of Klein bottle}
        \mathbf{K}=\frac{[0,1]\times\mS^1}{(0,e^{i\theta})\sim(1,e^{-i\theta})},
    \end{equation} 
    while the solid Klein bottle, denoted by $\mathbf{SK}$, is represented as
    \begin{equation*}
        \mathbf{SK}=\frac{[0,1]\times\mD^2}{(0,r\cdot e^{i\theta})\sim(1,r\cdot e^{-i\theta})}.
    \end{equation*}
    The mapping class group $\MCG{\mathbf{K}}$ is generated by the homeomorphisms
    \[
        \phi_{1}(t,e^{i\theta})=(t,e^{i(\theta-2\pi t)})\quad\text{and}\quad \phi_2(t,e^{i\theta})=(1-t,e^{-i\theta}). 
    \]
    Here, $\phi_1$ is a non-trivial Dehn twist of the Klein bottle, while $\phi_2$ is the Y-homeomorphism (see \cite{lickorish1963homeomorphisms,lickorish1965homeomorphisms}). Both $\phi_1$ and $\phi_2$ extend naturally to homeomorphisms of $\mathbf{SK}$, where they act as bundle isomorphisms of $\mathbf{SK}$ viewed as a twisted disk bundle over the circle. By Proposition \ref{prop: equiv of spherical type}, the transnormal system $\F$ has a unique equivalence class in this case. Consequently, $M$ is obtained by the canonical gluing of two solid Klein bottles, yielding a space homeomorphic to the twisted $\mS^2$-bundle over the circle, i.e., 
    \[
        \mS^1\times_{\text{\tiny tw}} \mS^2= \frac{[0,1]\times\mS^2}{(0;x_1,x_2,x_3)\sim(1;-x_1,x_2,x_3)},
    \]
    where $(x_1,x_2,x_3)$ are the coordinates of $\mS^2$ in $\mR^3$. 
    \par $\ $\par 
    In summary, Table \ref{table: Classification of spherical type in closed 3d} lists all the equivalence classes of transnormal systems of spherical type on compact 3-manifolds. 
    \begin{table}[!htbp]
        \centering \small
        \caption{Equivalence classes of transnormal systems of spherical type on closed 3-manifolds.}
        \label{table: Classification of spherical type in closed 3d}
        \renewcommand\arraystretch{1.3}
        \tabcolsep=0.5cm
        \scalebox{1}{
        \begin{tabular}{C{2cm}|C{2cm}|C{1.5cm}|C{2.5cm}|C{2cm}} 
            \toprule[1pt]
            Manifold $M$  & DR-foils & S-foils & Cohomogeneity-One Action & Orientable ($M$) \\
            \cline{1-5}\cline{1-5}
            $\mS^3$ & $2$-sphere & Point & $\operatorname{SO}(3,\mR)$ & \multirow{2}{*}{Yes}\\
            \cline{1-4}
            Lens Space & Torus & \multirow{2}{*}{Circle} & $\mS^1\times\mS^1$ & \\
            \cline{1-2}\cline{4-5}
            $\mS^1\times_{\text{\tiny tw}} \mS^2$ & Klein bottle && None & No \\
            \bottomrule[1pt]
        \end{tabular}
        }
    \end{table}
    And each equivalence class has a good representative:
    \begin{prop}\label{prop: spherical type equiv CPC}
        Let $\F$ be a transnormal system of spherical type on a compact Riemannian 3-manifold $M$. Up to equivalence, $\F$ is a CPC transnormal system, and meanwhile $M$ is locally isometric to either the standard $\mS^3$ or $\mS^2\times\mR$. 
    \end{prop}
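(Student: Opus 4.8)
The plan is to treat separately the three cases of the preceding classification: two point S-foils, two circle S-foils with solid-torus disk bundles, and two circle S-foils with solid-Klein-bottle disk bundles. The key leverage is that in each case the analysis above has already produced a \emph{unique} equivalence class of transnormal systems of spherical type on the relevant manifold. Hence it suffices to exhibit, in each case, a single representative that is CPC and whose ambient manifold carries a metric locally isometric to $\mS^3$ or $\mS^2\times\mR$; the uniqueness statements, together with Proposition~\ref{prop: equiv of spherical type} and Proposition~\ref{prop: every lens has only one equiv}, then force the given $\F$ to be equivalent to this representative.

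For the first two cases I would work on the round sphere $\mS^3\subset\mR^4$. Writing $\mR^4=\mR\oplus\mR^3$ and letting $\operatorname{SO}(3,\mR)$ act on the $\mR^3$-factor, the principal orbits are the geodesic $2$-spheres $\{x_0=\cos\theta\}\cap\mS^3$ and the singular orbits are the two poles $(\pm1,0)$; since geodesic spheres in a space of constant curvature have constant principal curvatures, the transnormal system induced by this isometric cohomogeneity-one action is CPC and $M\cong\mS^3$ is locally isometric to $\mS^3$. For the lens-space case, I would instead use the maximal torus $\mS^1\times\mS^1\subset\operatorname{SO}(4)$ acting on $\mS^3\subset\mC^2$ by $(e^{i\alpha},e^{i\beta})\cdot(z_1,z_2)=(e^{i\alpha}z_1,e^{i\beta}z_2)$: the principal orbits are the Clifford tori $\{|z_1|=\cos\theta,\ |z_2|=\sin\theta\}$, which have constant principal curvatures, and the two singular orbits are the great circles $\{z_1=0\}$ and $\{z_2=0\}$. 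Every lens space arises as $\mS^3\big/\mZ_p$ for a cyclic subgroup $\mZ_p$ of this torus acting freely by $(z_1,z_2)\mapsto(\z z_1,\z^q z_2)$, $\z=e^{2\pi i/p}$; since $\mZ_p$ lies in the acting torus it preserves the Clifford-tori configuration and acts by isometries, so the quotient metric is locally isometric to $\mS^3$ and the induced system (with torus DR-foils and circle S-foils) is CPC. Proposition~\ref{prop: every lens has only one equiv} then identifies this with the given $\F$ up to equivalence.

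The remaining case requires building a CPC representative on $\mS^1\times_{\text{\tiny tw}}\mS^2$ whose model geometry is $\mS^2\times\mR$. I would equip $\mS^2\times\mR$ with the product of the round metric and $\dd{t^2}$, and consider the fixed-point-free isometry $\rho(x,t)=\big(r(x),t+1\big)$, where $r(x_1,x_2,x_3)=(-x_1,x_2,x_3)$ is reflection of $\mS^2$ across a great circle; the quotient $\mS^2\times\mR\big/\langle\rho\rangle$ is precisely $\mS^1\times_{\text{\tiny tw}}\mS^2$ and is locally isometric to $\mS^2\times\mR$. The height function $f(x,t)=x_3$ is $\rho$-invariant and satisfies $|\nabla f|^2=1-f^2$, hence is transnormal; its level sets are the cylinders $\{x_3=c\}\times\mR$ for $c\in(-1,1)$ and the two pole-lines $\{(0,0,\pm1)\}\times\mR$ for $c=\pm1$. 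Under $\rho$ the pole-lines descend to circle S-foils, while each latitude cylinder descends, via the orientation-reversing reflection of its latitude circle, to a Klein-bottle DR-foil; this is exactly the spherical-type configuration of the third case. Each such foil has one principal curvature equal to the (constant) geodesic curvature of the corresponding latitude circle in $\mS^2$ and one principal curvature $0$ along the totally geodesic $\mR$-direction, so the system is CPC.

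I expect the main obstacle to be this third case: one must check carefully that the $\rho$-quotient of the latitude cylinders yields Klein bottles rather than tori and that the pole-lines yield circles, so that the resulting system is genuinely of spherical type with the foil types prescribed in that case, and then that its principal curvatures are constant along each \emph{whole} foil. The first two cases are comparatively routine, reducing to standard facts about geodesic spheres and Clifford tori in constant curvature together with the invariance of principal curvatures under local isometries; once the representatives are in place, equivalence with $\F$ is immediate from the uniqueness of the equivalence class in each case.
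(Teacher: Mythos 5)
Your overall strategy coincides with the paper's: the classification in this subsection already establishes a unique equivalence class in each of the three cases (point S-foils on $\mS^3$; circle S-foils with solid tori on lens spaces; circle S-foils with solid Klein bottles on $\mS^1\times_{\text{\tiny tw}}\mS^2$), so it suffices to exhibit one CPC representative with the standard geometry in each case, which is exactly what the paper does (via the $\operatorname{SO}(3,\mR)$ and $\mS^1\times\mS^1$ cohomogeneity-one actions for the first two cases and an $\mS^2\times\mR$ model for the third). Your Case~3 construction is correct and worth the care you flag: the quotient of each latitude cylinder is the mapping torus of an orientation-reversing homeomorphism of the circle, hence a Klein bottle, the pole-lines descend to circles with solid-Klein-bottle normal disk bundles, and constancy of the principal curvatures is inherited from the cylinders in $\mS^2\times\mR$.

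There is, however, one concrete gap in your Case~2. The claim that every lens space arises as $\mS^3\big/\mZ_p$ fails for $L(0,1)=\mS^2\times\mS^1$, which in this paper's convention (coprime $p,q$, so $p=0$, $q=1$ is allowed; see Table~\ref{table: intro: closed 3-manifold}) is a lens space carrying a spherical-type system with torus DR-foils and two circle S-foils. It has infinite fundamental group and is not a free quotient of $\mS^3$ by any finite cyclic group, and an infinite cyclic subgroup of the acting torus does not act properly discontinuously. This case is precisely one of the two places where the $\mS^2\times\mR$ alternative in the statement is needed: take the product metric on $\mS^2\times\mS^1$ and the foils $\{x_3=c\}\times\mS^1$ (tori) together with the two pole circles $\{(0,0,\pm1)\}\times\mS^1$, which is CPC and induced by the rotational $\mS^1\times\mS^1$ action. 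With that representative added, Proposition~\ref{prop: every lens has only one equiv} applies as you intend and the argument is complete.
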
 
    
    \subsection{Transnormal systems of real-projective type}\label{subsec: 3d and real-projective type}
    Let $\F=\{F_t\}_{t\in[0,\frac{t_0}{2}]}$ be a transnormal system of real-projective type on Riemannian manifold $M$. Recalling Proposition \ref{prop: non-essential to essential} (iv), $(\tilde{M},\tilde{\F}=\{\tilde{F}_t\}_{t\in[0,t_0]})\to (M,\F)$ is a two-sheeted covering map, where $\tilde{\F}$ is of spherical type and $M=\tilde{M}\big/\langle\sigma\rangle$ with $\sigma(\tilde{F}_t)=\tilde{F}_{t_0-t}$ being an isometric foil-reflection. For simplicity, denote $t_1=\frac{t_0}{2}$ and $\sigma_{t_1}$ as the restriction of $\sigma$ to $\tilde{F}_{t_1}$. Immediately, we have  
    \begin{equation}\label{equ: M in real-projective type}
        M= \tilde{M}/\langle\sigma\rangle\simeq \frac{\mathcal{D}}{x\sim \sigma_{t_1}(x)\in\partial\mathcal{D}},
    \end{equation}
    where $\mathcal{D}$ is the submanifold of $\tilde{M}$ composed of all foils $\tilde{F}_t$ with $t\le t_1$. Moreover, $\mathcal{D}$ is the linear disk bundle over the unique S-foil in $\F$, and its boundary $\partial\mathcal{D}$ is diffeomorphic to any DR-foil of $\F$. Note that $\sigma_{t_1}$ is a fixed-point-free involution on $\tilde{F}_{t_1}$, that is, $\sigma_{t_1}\in\inv{\partial\mathcal{D}}$. Suppose that $\F\uppri$ is another transnormal system of real-projective type on $M\uppri$, with each notation differing by a prime superscript. A sufficient criterion for equivalence of $\F$ and $\F\uppri$ is as follows. 
    \begin{prop}\label{prop: equiv of real-projective type}
        If there exist a bundle isomorphism $\varphi$ between linear disk bundles $\mathcal{D}$ and $\mathcal{D}\uppri$ such that $\sigma\uppri_{t\uppri_1}=\psi\circ\sigma_{t_1}\circ\psi^{-1}$ with $\psi:\partial\mathcal{D}\to\partial\mathcal{D}\uppri$ smoothly isotopic to $\left.\varphi\right|_{\partial\mathcal{D}}$, then $\F$ and $\F\uppri$ are equivalent transnormal systems. 
    \end{prop}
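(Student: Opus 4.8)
The plan is to adapt the collar-modification argument from the proof of Proposition \ref{prop: equiv of spherical type} to the self-gluing description \myeqref{equ: M in real-projective type}. The essential new feature is that $M$ is built from a \emph{single} disk bundle $\mathcal{D}$ whose boundary is identified with itself through the fixed-point-free involution $\sigma_{t_1}$, rather than from two distinct bundles glued once; accordingly, instead of a pair of maps we must produce a single modified bundle map $\Phi:\mathcal{D}\to\mathcal{D}\uppri$ that intertwines $\sigma_{t_1}$ with $\sigma\uppri_{t\uppri_1}$ on the boundary and descends to the quotient.

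First I would fix a collar $C\cong\partial\mathcal{D}\times(1-\varepsilon,1]$ on which the bundle isomorphism takes the radial form $\varphi(r,y)=(r,\varphi|_{\partial\mathcal{D}}(y))$, exactly as in the spherical case. Since $\psi$ is smoothly isotopic to $\varphi|_{\partial\mathcal{D}}$, choose a smooth isotopy $\{\Psi_r\}_{r\in[1-\varepsilon,1]}$ of diffeomorphisms $\partial\mathcal{D}\to\partial\mathcal{D}\uppri$ with $\Psi_{1-\varepsilon}=\varphi|_{\partial\mathcal{D}}$ and $\Psi_1=\psi$, reparametrized by a smooth cutoff so that it is stationary near $r=1-\varepsilon$. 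Define $\Phi=\varphi$ off $C$ and $\Phi(r,y)=(r,\Psi_r(y))$ on $C$. Because $\Phi$ preserves the radial coordinate $r$, it carries the zero section (the S-foil) and every sphere subbundle (every DR-foil) of $\mathcal{D}$ to the corresponding foil of $\mathcal{D}\uppri$, it is a diffeomorphism, and $\Phi|_{\partial\mathcal{D}}=\psi$.

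The decisive step is to verify that $\Phi$ descends through the self-identifications. A boundary point $x$ is glued to $\sigma_{t_1}(x)$ in $M$ and to its $\sigma\uppri_{t\uppri_1}$-image in $M\uppri$; using $\Phi|_{\partial\mathcal{D}}=\psi$ together with the hypothesis $\sigma\uppri_{t\uppri_1}=\psi\circ\sigma_{t_1}\circ\psi^{-1}$, one computes
\[
    \sigma\uppri_{t\uppri_1}\big(\Phi(x)\big)=\psi\big(\sigma_{t_1}(x)\big)=\Phi\big(\sigma_{t_1}(x)\big),
\]
so $\Phi$ sends glued pairs to glued pairs and induces a well-defined map $\bar{\Phi}:M\to M\uppri$. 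Note that $\sigma\uppri_{t\uppri_1}$, being conjugate to the involution $\sigma_{t_1}$, is automatically a fixed-point-free involution, so no further compatibility condition is needed. As $\Phi$ is a foil-preserving diffeomorphism respecting the quotient, $\bar{\Phi}$ is a diffeomorphism realizing the equivalence of $\F$ and $\F\uppri$ in the sense of Definition \ref{def: equivalence of tran sys}.

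I expect the only genuinely delicate point to be smoothness: of $\Phi$ across the inner collar edge $r=1-\varepsilon$, and of $\bar{\Phi}$ across the identified SR-foil. Both are handled exactly as in the spherical and toric proofs, by arranging via the cutoff that $\Phi$ agrees with the bundle map $\varphi$ near $r=1-\varepsilon$, and by recalling that the quotient smooth structure on $M$ is the one making the gluing map $\sigma_{t_1}$ smooth, under which $\bar{\Phi}$ inherits smoothness from $\Phi$.
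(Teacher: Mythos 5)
Your proposal is correct and follows essentially the same route as the paper, which simply invokes Proposition \ref{prop: non-essential to essential} (iv) together with the collar-modification argument of Proposition \ref{prop: equiv of spherical type}; your write-up just makes explicit the descent through the self-identification $x\sim\sigma_{t_1}(x)$. The only small refinement worth adding is that the isotopy $\Psi_r$ should be made stationary near \emph{both} ends of the collar (not only near $r=1-\varepsilon$), so that $\Phi$ equals $\psi$ to infinite order at $\partial\mathcal{D}$ and the induced map is smooth across the SR-foil.
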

    \par 
    The proof follows from Proposition \ref{prop: non-essential to essential} (iv) and the argument in Proposition \ref{prop: equiv of spherical type}.
    \par 
    When $\F$ is a transnormal system of real-projective type on a compact 3-manifold $M$, the equivalence class of $\F$ is determined by the disk bundle $\mathcal{D}\to L$ and the conjugacy class of $\sigma_{t_1}$ via an isotopy to the identity. As in Subsection \ref{subsec: 3d and spherical type}, $\mathcal{D}$ has only three possibilities up to bundle isomorphism.
    \par 
    \paragraph{Case 1: $L$ is a single point.} In this case, $\mathcal{D}$ is a 3-disk, and $\sigma_{t_1}\in\inv{\mS^2}$. Any fixed-point-free involution on $\mS^2$ is conjugate to the antipodal map $\sigma_{\mS^2}$ via an isotopy to the identify. By Proposition \ref{prop: equiv of real-projective type}, there is a unique equivalence class of transnormal systems of real-projective type whose S-foil is a single point. Hence, $M$ is diffeomorphic to $\mR P^3$, and $\F$, up to equivalence, is induced by the cohomogeneity-one action of $\operatorname{SO}(3,\mR)$. 
    \par 
    \noindent
    \paragraph{Case 2: $L$ is a circle, while $\mathcal{D}$ is a solid torus.} Then, $\sigma_{t_1}\in\inv{\partial\mD^2\times\mS^1}$. By the classification of involutions on surfaces (see \cite{asoh1976classification}), any fixed-point-free involution on the torus is conjugate to either
    \[
        \sigma_{+}:\partial\mD^2\times\mS^1\to \partial\mD^2\times\mS^1,\quad (z_1,z_2)\mapsto (-z_1,z_2),
    \]
    or $\sigma_{-}$, depending on whether it preserves orientation or not, where $\sigma_{-}(z_1,z_2)=(-z_1,\bar{z}_2)$ is defined in \myeqref{equ: sigma reverse orientation on torus} (in different coordinates).
    Hence, there exists a homeomorphism $h$ on the torus such that 
    \[
        \sigma_{t_1}=h\circ\sigma_{+}\circ h^{-1} \quad \textrm{or}\quad \sigma_{t_1}=h\circ\sigma_{-}\circ h^{-1}.
    \]
    Moreover, by Proposition \ref{prop: equiv of real-projective type}, only the isotopy class of $h$ is relevant, so we may assume 
    \[ 
        h(z_1,z_2)=(z_1^{h_{11}}z_2^{h_{12}},z_1^{h_{21}}z_2^{h_{22}})
    \] 
    with $(h_{ij})\in\operatorname{SL}(2,\mZ)$.
    \par 
    If $\sigma_{t_1}=h\circ\sigma_{+}\circ h^{-1}$, then the SR-foil of $\F$ is diffeomorphic to the torus. A straightforward calculation yields
    \[
        \sigma_{t_1}(z_1,z_2)=((-1)^{h_{11}}z_1,(-1)^{h_{21}}z_2).
    \]
    Since $h_{11}$ and $h_{21}$ cannot both be even, there are exactly two subcases:
    \begin{enumerate}[(i)]
        \item $h_{21}$ is even, i.e., $\sigma_{t_1}(z_1,z_2)=(-z_1,z_2)$. Recalling \myeqref{equ: M in real-projective type}, the map $\sigma_{t_1}$ identifies antipodal points on the boundary of the disk fibers of $\mathcal{D}$, implying that $M$ is an $\mR P^2$-bundle over the circle. Since such a bundle must be trivial, $M$ is diffeomorphic to $\mS^1\times\mR P^2$. Up to equivalence, $\F$ can be induced by the cohomogeneity-one action of $\mS^1\times\mS^1$ on $\mS^1\times\mR P^2$. 
        \item $h_{21}$ is odd. Considering the bundle isomorphism in \myeqref{equ: isomorphism of solid torus} and applying Proposition \ref{prop: equiv of real-projective type}, we deduce that $\sigma_{t_1}$ and $\psi\circ\sigma_{t_1}\circ\psi^{-1}$ correspond to the same equivalence class of $\F$, where $\psi=\left.\varphi\right|_{\partial\mD^2\times\mS^1}$ and
        \begin{equation}\label{equ: 3-d real-projective type relation in equiv class}
            \psi\circ\sigma_{t_1}\circ\psi^{-1}(z_1,z_2)=((-1)^{h_{11}+nh_{21}}z_1,(-1)^{h_{21}}z_2), \quad n\in\mZ.
        \end{equation}
        By selecting an appropriate $n\in\mZ$ in \myeqref{equ: 3-d real-projective type relation in equiv class}, we ensure that $h_{11}+nh_{21}$ is even. Consequently, we may assume $\sigma_{t_1}(z_1,z_2)=(z_1,-z_2)$. The map $\sigma_{t_1}$ identifies the disk fibers of the disk bundle at antipodal positions. Thus, $M$ remains a fiber bundle, with fibers diffeomorphic to $\mS^2$, and base space given by the original base $\mS^1$ modulo the antipodal map. Since $M$ is non-orientable, it must be the twisted bundle $\mS^1\times_{\text{\tiny tw}} \mS^2$. Up to equivalence, $\F$ can be induced by the cohomogeneity-one action of $\mS^1\times\mS^1$ on $\mS^1\times_{\text{\tiny tw}} \mS^2$. 
    \end{enumerate}
    \par 
    If $\sigma_{t_1}=h\circ\sigma_{-}\circ h^{-1}$, the SR-foil of $\F$ is diffeomorphic to the Klein bottle. A straightforward calculation yields
    \begin{equation}\label{equ: involution of orientation-reversing class}
        \sigma_{t_1}(z_1,z_2)=\left((-1)^{h_{11}}z^{h_{11}h_{22}+h_{12}h_{21}}_1z^{-2h_{11}h_{12}}_2,(-1)^{h_{21}}z^{2h_{21}h_{22}}_1z^{-h_{11}h_{22}-h_{12}h_{21}}_2\right).
    \end{equation}
    The resulting manifold $M$ is a Klein space defined in \cite{kim1978some,kim1981involutions} as below (also defined equivalently in terms of Seifert fibre spaces in \cite{rubinstein19793}):
    \begin{defi}\label{def: Klein space}
        Under complex coordinates, consider $\sigma_{-}(z_1,z_2)=(-z_1,\bar{z}_2)$ and $f(z_1,z_2)=(z_1^rz_2^s,z_1^pz_2^q)$ on $\mS^1\times\mS^1$, where $p,q,r,s$ are integers satisfying $ps-qr=1$. By gluing $(0,z_1,z_2)$ with $(0,\sigma_{-}(z_1,z_2))$ in $[0,1]\times\mS^1\times\mS^1$, and then gluing $(1,z_1,z_2)$ to the corresponding point $f(z_1,z_2)$ on the boundary of $\mD^2\times\mS^1$, the resulting topological space forms a compact, boundaryless manifold, denoted by $M(p,q,r,s)$, and called a {\bf Klein space}. 
    \end{defi}
    \par 
    \begin{prop}[\cite{kim1978some}]
        Two Klein spaces $M(p,q,r,s)$ and $M(p\uppri,q\uppri,r\uppri,s\uppri)$ are homeomorphic if and only if $p=\pm p\uppri,q=\pm q\uppri$. Therefore, $M(p,q,r,s)$ can be simply written as $M(p,q)$, where $p$ and $q$ are coprime. 
    \end{prop}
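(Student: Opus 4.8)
The plan is to recognize each Klein space as a Dehn filling of a single fixed building block, and then to treat the two implications separately. First I would unpack the structural content of Definition~\ref{def: Klein space}. The involution $\sigma_-(z_1,z_2)=(-z_1,\bar z_2)$ is a free, orientation-reversing involution of the torus $\mS^1\times\mS^1$, so collapsing the $t=0$ slice of $[0,1]\times\mS^1\times\mS^1$ by $\sigma_-$ produces exactly the orientable twisted $I$-bundle $N$ over the Klein bottle $\mathbf{K}$ (the mapping cylinder of the orientation double cover $\mS^1\times\mS^1\to\mathbf{K}$), whose boundary is a torus. Hence $M(p,q,r,s)=N\cup_f V$, where $V=\mD^2\times\mS^1$ is a solid torus attached along $\partial N$ by $f(z_1,z_2)=(z_1^rz_2^s,z_1^pz_2^q)$, whose boundary matrix has $ps-qr=1$. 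Thus the homeomorphism type is a Dehn filling of $N$, controlled only by the filling slope, i.e. by the curve on $\partial N$ bounding the meridian disk of $V$.

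To make the parameters explicit I would compute $\pi_1$. Writing $\pi_1(N)=\langle A,B\mid BAB^{-1}=A^{-1}\rangle$, where $A,B$ are the images of the $y$-translation and of $\sigma_-$, the image of $\pi_1(\partial N)$ is the index-two subgroup $\langle A,B^2\rangle$ (with $B^2$ the $x$-translation), and a slope computation shows the meridian of $V$ represents $B^{-2q}A^{p}$. Therefore
\[
\pi_1\big(M(p,q,r,s)\big)=\big\langle A,B\ \big|\ BAB^{-1}=A^{-1},\ A^{p}=B^{2q}\big\rangle,
\]
which is manifestly independent of $r,s$ and isolates the two genuine parameters $p,q$. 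For the ``if'' direction I would then exhibit explicit homeomorphisms: once $ps-qr=1$ is fixed the only remaining freedom in $(r,s)$ is $(r,s)\mapsto(r+kp,s+kq)$, realized by pre-composing $f$ with a boundary Dehn twist extending over $V$, exactly as in \myeqref{equ: isomorphism of solid torus}; the sign changes $(p,q)\mapsto(\pm p,\pm q)$ are realized by orientation-reversing self-homeomorphisms of $V$ together with the self-homeomorphisms of $N$ (elements of $\MCG{\mathbf{K}}$ extending over the $I$-bundle) acting on $\partial N$ by the appropriate reflections. These account for all slopes producing a fixed homeomorphism type, giving $M(p,q,r,s)\simeq M(\pm p,\pm q,r',s')$.

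The ``only if'' direction is the crux, and I expect it to be the main obstacle. As a first, cheap invariant I would abelianize the presentation above: $H_1$ is the cokernel of $\left(\begin{smallmatrix} 2 & 0\\ p & -2q\end{smallmatrix}\right)$, and its Smith normal form recovers $|q|$ (indeed $|H_1|=4|q|$ when finite) together with the parity of $p$ --- one gets the cyclic group $\mZ/4q$ when $p$ is odd and $\mZ/2\oplus\mZ/2q$ when $p$ is even. This already separates most pairs, but it cannot distinguish, for example, $M(1,q)$ from $M(3,q)$, so homology is genuinely insufficient and one must pass to the full group $\pi_1$ (equivalently, to the Seifert invariants of $M$ regarded as a Seifert fibration over a small base orbifold, the viewpoint of \cite{rubinstein19793}). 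The relevant structural features are that $\langle A\rangle$ is normal with $A^{2p}=e$ while $B^{2}$ is central, so the orders and the central series of the group encode $p$ as well as $q$.

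Extracting $(p,q)$ up to simultaneous sign from the isomorphism type of $\langle A,B\mid BAB^{-1}=A^{-1},A^{p}=B^{2q}\rangle$ --- equivalently, verifying that the homeomorphism classification of these Seifert fibered spaces produces no exotic coincidences beyond the symmetries exploited in the ``if'' direction --- is the delicate step and is precisely the content carried out in \cite{kim1978some}; I would either reproduce that group-theoretic analysis or cite it directly. The conclusion that $p,q$ may be taken coprime then follows because $\gcd(p,q)>1$ would contradict $ps-qr=1$, so the two-parameter notation $M(p,q)$ is well defined.
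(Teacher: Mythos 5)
The paper offers no proof of this proposition at all: it is imported wholesale from \cite{kim1978some}, so there is no internal argument to compare against. Judged on its own terms, your outline is sound and is the standard way to see the result. The identification of the $\sigma_-$-quotient with the orientable twisted $I$-bundle $N$ over the Klein bottle, the reading of $M(p,q,r,s)$ as a Dehn filling of $N$, the computation of the filling slope as $A^{p}B^{-2q}$ in $\langle A,B^2\rangle\le\pi_1(N)$, and the realization of the symmetries $(r,s)\mapsto(r+kp,s+kq)$ and $(p,q)\mapsto(\pm p,\pm q)$ by homeomorphisms extending over $V$ and $N$ are all correct, so the ``if'' direction is essentially complete.

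Two remarks on the ``only if'' direction, which you leave to the reference. First, you can actually close it with the data you already have: from the presentation $\langle A,B\mid BAB^{-1}=A^{-1},\,A^{p}=B^{2q}\rangle$ one gets $[G,G]=\langle A^{2}\rangle$ of order $|p|$, hence $|\pi_1|=4|pq|$ while $|H_1|=4|q|$, so the abstract fundamental group already recovers $(|p|,|q|)$ whenever it is finite; the two infinite cases are forced by $ps-qr=1$ to be $(|p|,|q|)=(1,0)$ and $(0,1)$, with $\pi_1\cong\mZ$ and $\mZ/2*\mZ/2$ respectively, which are distinguished from each other and from everything else. Since a homeomorphism invariant that determines $(|p|,|q|)$ is exactly what ``only if'' requires, no appeal to Seifert rigidity is needed. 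Second, your parenthetical ``equivalently, to the Seifert invariants'' is an overstatement: Seifert invariants are in general strictly finer than $\pi_1$ (as the lens spaces already show), so if one did need more than the group, passing to Seifert data would be an upgrade, not a reformulation. Neither point is a genuine gap in your argument, and your treatment is in any case no less complete than the paper's, which proves nothing here.
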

    \par 
    A direct analysis shows that $M$ is precisely the Klein space $M(h_{21},h_{22})$. 
    Moreover, Klein spaces exhibit structural similarities with lens spaces; for further details, see \cite{kim1978some,kim1981involutions}. In this setting, the properties of $\F$ on Klein spaces parallel those in Proposition \ref{prop: every lens has only one equiv}:
    \begin{prop}
        Assume $\F$ and $\F\uppri$ are transnormal systems of real-projective type on Klein spaces $M(h_{21},h_{22})$ and $M(h_{21}\uppri,h_{22}\uppri)$, respectively, whose DR-foils are all diffeomorphic to the torus and SR-foils are diffeomorphic to the Klein bottle. Then, $\F$ is equivalent to $\F\uppri$ if and only if $M(h_{21},h_{22})$ is homeomorphic to $M(h_{21}\uppri,h_{22}\uppri)$. 
    \end{prop}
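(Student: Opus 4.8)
The plan is to adapt the argument of Proposition \ref{prop: every lens has only one equiv} to the setting of a single solid torus carrying a free involution on its boundary. The `only if' direction is immediate, since an equivalence of transnormal systems is in particular a diffeomorphism of the ambient manifolds, forcing $M(h_{21},h_{22})$ and $M(h\uppri_{21},h\uppri_{22})$ to be homeomorphic. For the `if' direction I would first invoke the preceding classification of Klein spaces to translate the hypothesis into the arithmetic statement $h_{21}=\pm h\uppri_{21}$ and $h_{22}=\pm h\uppri_{22}$, and then produce the required equivalence through Proposition \ref{prop: equiv of real-projective type}.

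By \myeqref{equ: M in real-projective type} and the discussion of Case 2, both $\F$ and $\F\uppri$ are encoded by a solid torus $\mathcal{D}\simeq\mD^2\times\mS^1$ together with the boundary involution $\sigma_{t_1}=h\circ\sigma_-\circ h^{-1}$ (resp. $\sigma\uppri_{t\uppri_1}=h\uppri\circ\sigma_-\circ (h\uppri)^{-1}$), where $h,h\uppri\in\operatorname{SL}(2,\mZ)$. By Proposition \ref{prop: equiv of real-projective type} it suffices to exhibit a self-homeomorphism $\psi$ of the boundary torus, isotopic to the restriction of a bundle isomorphism of the solid torus, with $\sigma\uppri_{t\uppri_1}=\psi\circ\sigma_{t_1}\circ\psi^{-1}$. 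I would reformulate this as a condition on $h$ up to two group actions: left multiplication by the boundary homology class of a bundle isomorphism \myeqref{equ: isomorphism of solid torus} (which replaces $\sigma_{t_1}$ by the conjugate involution $(\psi h)\circ\sigma_-\circ(\psi h)^{-1}$), and right multiplication by the centralizer of $\sigma_-$ (which leaves $\sigma_{t_1}$ literally unchanged, since any factor commuting with $\sigma_-$ may be absorbed into $h$). Thus the equivalence class of $\F$ depends only on the double coset of $h$ under these two actions.

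The two key computations are then the following. First, a bundle isomorphism preserves the disk fibers and hence fixes the meridian up to sign, so its boundary action is an upper-triangular matrix $\left(\begin{smallmatrix}\epsilon_1 & c\\ 0 & \epsilon_2\end{smallmatrix}\right)$ with $\epsilon_i\in\{\pm1\}$ and $c\in\mZ$; left multiplication by such a matrix scales the bottom row $(h_{21},h_{22})$ by $\epsilon_2$ and adds an arbitrary integer multiple of the bottom row to the top row. Second, a direct check shows that the centralizer of the affine involution $\sigma_-$ is, at the level of homology, the group of diagonal matrices $\operatorname{diag}(\pm1,\pm1)$; right multiplication by these replaces $(h_{21},h_{22})$ by $(\pm h_{21},\pm h_{22})$ with independent signs. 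Combining the two, any $h\uppri$ with $(h\uppri_{21},h\uppri_{22})=(\pm h_{21},\pm h_{22})$ can be brought to $h$: first match the bottom rows (including signs) by a diagonal factor on the right, then match the top rows by an upper-triangular factor on the left, which is legitimate because the bottom row is primitive ($h\in\operatorname{SL}(2,\mZ)$). The resulting double-coset invariant is exactly the bottom row up to independent sign changes, which coincides with the homeomorphism invariant of $M(h_{21},h_{22})$ furnished by the preceding proposition.

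I expect the main obstacle to be the verification of the centralizer together with the passage from homology classes to honest maps. Since $\sigma_-$ carries a nontrivial translation part $(\tfrac12,0)$, I must track both the linear and the translational data to confirm that $\operatorname{diag}(\pm1,\pm1)$ genuinely commutes with $\sigma_-$ as a self-homeomorphism of the torus, and then lift the matched homology relation to an actual affine conjugator $\psi$ whose linear part is a bundle-isomorphism boundary matrix, so that Proposition \ref{prop: equiv of real-projective type} indeed applies. The orientation-reversing nature of $\sigma_-$ is precisely what makes this case more delicate than the purely linear gluing computation of Proposition \ref{prop: every lens has only one equiv}.
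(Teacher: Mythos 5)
Your proposal is correct and follows essentially the same route as the paper: reduce the homeomorphism hypothesis to the arithmetic condition on $(h_{21},h_{22})$, then realize $h\uppri$ as $h$ modified by a bundle-isomorphism boundary matrix from \myeqref{equ: isomorphism of solid torus} on the left and an element commuting with $\sigma_{-}$ on the right, so that Proposition \ref{prop: equiv of real-projective type} applies. Your double-coset formulation and the explicit check that $\operatorname{diag}(\pm1,\pm1)$ centralizes $\sigma_{-}$ merely make explicit what the paper's two displayed matrix identities leave implicit.
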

    \begin{proof}
        Assume that $\F$ on $M(h_{21},h_{22})$ is constructed via an attaching map $\sigma_{t_1}=h\circ\sigma_{-}\circ h^{-1}$ on the boundary of the disk bundle $\mD^2\times\mS^1$, and similarly, $\F\uppri$ on $(M(h_{21}\uppri,h_{22}\uppri),\F\uppri)$ is defined in the same way, with corresponding prime superscripts. The relation between $(h_{ij})$ and $(h_{ij}\uppri)$ is given by
        \[
            \left(
                \begin{array}{cc}
                    h\uppri_{11}& h\uppri_{12}\\
                    h\uppri_{21}& h\uppri_{22}
                \end{array}
            \right)=\pm\left(
                \begin{array}{cc}
                    h_{11}+n\,h_{21}& h_{12}+n\,h_{22}\\
                    h_{21}& h_{22}
                \end{array}
            \right)
        \]
        or 
        \[
            \left(
                \begin{array}{cc}
                    h\uppri_{11}& h\uppri_{12}\\
                    h\uppri_{21}& h\uppri_{22}
                \end{array}
            \right)=\pm\left(
                \begin{array}{cc}
                    -(h_{11}+n\,h_{21})& h_{12}+n\,h_{22}\\
                    h_{21}& -h_{22}
                \end{array}
            \right),
        \]
        for some $n\in \mZ$. As in the proof of Proposition \ref{prop: every lens has only one equiv}, applying the criterion given in Proposition \ref{prop: equiv of real-projective type} and considering the bundle isomorphisms in \myeqref{equ: isomorphism of solid torus} establishes the result.
    \end{proof}
    \par 
    Among Klein spaces, there are two special cases, $M(0,1)$ and $M(1,0)$, which are diffeomorphic to $\mR P^3\#\mR P^3$ and $\mS^2\times\mS^1$, respectively. All other Klein spaces $M(p,q)$ have finite fundamental groups, and their universal covering space is $\mS^3$ (see \cite{kim1978some}). In summary, all Klein spaces can be equipped with a standard Riemannian metric that is locally isometric to either $\mS^2\times\mR$ or $\mS^3$. Within each equivalence class, there exists a corresponding $\F$ under this standard metric, which is a CPC transnormal system. However, according to the classification of cohomogeneity-one actions on compact 3-manifolds (\cite{mostert1957compact,neumann19683}), no $\F$ in this context can be induced by a cohomogeneity-one Lie group action.
    \par 
    \noindent
    \paragraph{Case 3: $L$ is a circle, while $\mathcal{D}$ is a solid Klein bottle.} In this case, $\mathcal{D}$ is a twisted 2-disk bundle over the circle, and $\sigma_{t_1}\in\inv{\mathbf{K}}$. According to \cite{asoh1976classification}, any fixed-point-free involution $\sigma_{t_1}$ on the Klein bottle can be written as $h\circ\sigma_{\mathbf{K}}\circ h^{-1}$, where $h$ is a homeomorphism of the Klein bottle and $\sigma_{\mathbf{K}}(t,e^{i\theta})=(t,-e^{i\theta})$ under \myeqref{equ: def of Klein bottle}. As in Subsection \ref{subsec: 3d and spherical type}, within each isotopy class of $h$, there exists a homeomorphism that extends naturally to a bundle isomorphism on the twisted disk bundle $\mathbf{SK}$. By Proposition \ref{prop: equiv of real-projective type}, the transnormal system $\F$ has a unique equivalence class in this case. Here, $M$ is obtained by gluing the boundary of $\mathbf{SK}$ along $\sigma_{\mathbf{K}}$. Since $\sigma_{\mathbf{K}}$ identifies antipodal points on the boundary of the disk fibers of $\mathbf{SK}$, $M$ is an $\mR P^2$-bundle over the circle, i.e., $\mS^1\times\mR P^2$. Both the DR-foils and SR-foil of $\F$ are diffeomorphic to the Klein bottle, and $\F$ is not equivalent to any transnormal system induced from a cohomogeneity-one Lie group action. 
    \par $\ $\par 
    In summary, Table \ref{table: Classification of real-projective type in closed 3d} lists all equivalence classes of transnormal systems of real-projective type on compact 3-manifolds. 
    \begin{table}[!htbp]
        \centering \footnotesize
        \caption{Equivalence classes of transnormal systems of real-projective type on closed 3-manifolds. }
        \label{table: Classification of real-projective type in closed 3d}
        \renewcommand\arraystretch{1.3}
        \tabcolsep=0.3cm
        \begin{tabular}{C{2cm}|C{1.8cm}|C{1.8cm}|C{2cm}|C{2.3cm}|C{1.5cm}}
            \toprule[1pt]
            Manifold $M$ & DR-foils & SR-foil & Essential Cover w.r.t. $\F$ & Cohomogeneity-One Action & Orientable ($M$) \\
            \hline
            $\mR P^3$  & 2-sphere & $\mR P^2$ & $\mS^3$ & $\operatorname{SO}(3,\mR)$ & Yes\\
            \hline
            $\mS^1\times\mR P^2$  & \multirow{3}{*}{Torus} & \multirow{2}{*}{Torus} & \multirow{2}{*}{$\mS^1\times\mS^2$} & \multirow{2}{*}{$\mS^1\times\mS^1$}& \multirow{2}{*}{No}\\
            \cline{1-1}
            $\mS^1\times_{\text{\tiny tw}} \mS^2$ & & & & &\\
            \cline{1-1}\cline{3-6}
            Klein Spaces & & \multirow{2}{*}{Klein bottle}  & Lens spaces & \multirow{2}{*}{None} & Yes\\
            \cline{1-2}\cline{4-4}\cline{6-6}
            $\mS^1\times\mR P^2$ & Klein bottle & & $\mS^1\times_{\text{\tiny tw}} \mS^2$ & & No\\
            \bottomrule[1pt]
        \end{tabular}
    \end{table}
    Based on the discussion in this subsection, although some of these equivalence classes are not induced by a cohomogeneity-one action, a conclusion similar to Proposition \ref{prop: spherical type equiv CPC} still holds:
    \begin{prop}\label{prop: real-projective type equiv CPC}
        Let $\F$ be a transnormal system of real-projective type on a compact Riemannian 3-manifold $M$. Up to equivalence, $\F$ is a CPC transnormal system, and meanwhile $M$ is locally isometric to either the standard $\mS^3$ or $\mS^2\times\mR$. 
    \end{prop}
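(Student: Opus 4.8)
The plan is to reduce everything to the spherical case already treated in Proposition \ref{prop: spherical type equiv CPC}, via the two-sheeted covering supplied by Proposition \ref{prop: non-essential to essential}(iv), and then to descend the standard CPC structure through the foil-reflection. By Proposition \ref{prop: non-essential to essential}(iv) the essential cover $(\tilde{M},\tilde{\F})$ is of spherical type and $M=\tilde{M}\big/\langle\sigma\rangle$ for an isometric, fixed-point-free, involutive foil-reflection $\sigma$. By Proposition \ref{prop: spherical type equiv CPC}, up to equivalence $\tilde{\F}$ is a CPC transnormal system with $\tilde{M}$ locally isometric to $\mS^3$ or $\mS^2\times\mR$. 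Since a local isometry preserves the second fundamental form, hence the principal curvatures, and since the quotient map $\tilde{M}\to M$ is a local isometry whenever $\sigma$ is isometric, the whole proposition follows once I can arrange the standard metric on $\tilde{M}$ to be invariant under $\sigma$: then $M$ inherits a locally isometric metric and each foil of $\F$, being a local isometric image of a CPC foil of $\tilde{\F}$, has constant principal curvatures.

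First I would dispose of the rows of Table \ref{table: Classification of real-projective type in closed 3d} for which $\F$ is induced by a cohomogeneity-one Lie group action, namely $\mR P^3$ under $\operatorname{SO}(3,\mR)$ and the two bundles $\mS^1\times\mR P^2$ and $\mS^1\times_{\text{\tiny tw}}\mS^2$ with torus DR-foils under $\mS^1\times\mS^1$. For these the invariant metric is homogeneous, round on $\mR P^3$ and a product-type metric on the $\mS^1$-bundles, so $M$ is locally isometric to $\mS^3$ or $\mS^2\times\mR$ respectively, and every orbit (foil) automatically has constant principal curvatures. This leaves only the Klein spaces (Case 2 with Klein-bottle SR-foil) and $\mS^1\times\mR P^2$ with Klein-bottle DR-foils (Case 3), precisely the classes that are not cohomogeneity-one.

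For these remaining classes I would build the invariant metric on the spherical cover directly. When the DR-foils are Klein bottles, the essential cover is $\mS^1\times_{\text{\tiny tw}}\mS^2$ with its locally $\mS^2\times\mR$ product metric, and $\sigma$ is an isometric involution of this product, so $M=\mS^1\times\mR P^2$ inherits a locally $\mS^2\times\mR$ metric whose foils are CPC. When the DR-foils are tori (Klein spaces), the essential cover $\tilde{M}$ is a lens space, which carries the round metric descending from $L(p,q)=\mS^3\big/\mZ_p$ (or the $\mS^2\times\mR$ product metric in the degenerate case $p=0$ giving $\mS^1\times\mS^2$). Here I would exhibit $\sigma$, up to the equivalence of Proposition \ref{prop: equiv of real-projective type}, as the restriction to the lens space of an isometry of the round $\mS^3$ normalizing the deck group $\mZ_p$, so that $M=\mS^3\big/G$ is a genuine spherical space form and $\F$ is CPC.

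The main obstacle will be this last point: a priori $\sigma$ is only a diffeomorphic involution realized on the solid-torus model by the orientation-reversing fixed-point-free involution $\sigma_{t_1}=h\circ\sigma_{-}\circ h^{-1}$ of \myeqref{equ: involution of orientation-reversing class}, and after passing to the standard metric via the equivalence of Proposition \ref{prop: spherical type equiv CPC} there is no reason for it to remain isometric. What must be checked is that within its equivalence class $\sigma_{t_1}$ can be conjugated to a genuine isometry of the chosen standard metric, i.e.\ that the identification of the two solid-torus pieces lifts to an isometry of the round $\mS^3$. This is carried out exactly as in the spherical-type construction underlying Proposition \ref{prop: spherical type equiv CPC}, using the explicit classification of fixed-point-free involutions from \cite{asoh1976classification}; once $\sigma$ is isometric, the conclusion is immediate from the local-isometry argument above.
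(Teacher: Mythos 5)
Your proposal is correct and follows essentially the same route as the paper: the paper's argument for this proposition is precisely the case analysis of Subsection 4.4, organized around the two-sheeted essential cover of Proposition \ref{prop: non-essential to essential} (iv), with the cohomogeneity-one cases handled by invariant homogeneous metrics and the Klein spaces and the solid-Klein-bottle case handled by realizing $\sigma_{t_1}$, in normal form from the classification of fixed-point-free involutions, as an isometry of the round $\mS^3$ or the $\mS^2\times\mR$ product metric on the essential cover. The key verification you flag as the ``main obstacle'' is exactly the content the paper supplies via the explicit normal forms $\sigma_{\mS^2}$, $\sigma_{+}$, $\sigma_{-}$, $\sigma_{\mathbf{K}}$, each of which is already an isometry of the relevant standard metric.
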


    \subsection{Conclusions}
    Based on the classifications and related results from Subsections \ref{subsec: 3d and toric type} to \ref{subsec: 3d and real-projective type}, we can immediately prove the main theorems concerning CPC transnormal systems. By combining Propositions \ref{prop: CPC for toric type}, \ref{prop: CPC for Klein-bottled type}, \ref{prop: spherical type equiv CPC}, and \ref{prop: real-projective type equiv CPC}, we complete the proof of Theorem \ref{thm: intro: CPC has standard}. Finally, Theorem \ref{thm: intro: no CPC in some equivalence class} follows from Propositions \ref{prop: CPC for toric type} and \ref{prop: CPC for Klein-bottled type}, or alternatively, directly from the next example. 
    \begin{ex}\label{example: mapping torus of pseudo-anosov of hyperbolic surface}
        Let $S_g$ be an orientable closed surface of genus $g\ge 2$, and suppose $\tau\in\homeoori{S_g}$ is a pseudo-Anosov homeomorphism of $S_g$. According to the Nielsen–Thurston classification, $\tau$ is not isotopic to any periodic homeomorphism, and the mapping torus $\mathcal{M}_{\tau}$ admits a hyperbolic structure, that is, it has a standard metric locally isometric to $\mH^3$ (see, for example, \cite{thurston1986hyperbolic}). Immediately, the transnormal system $\F$ of toric type induced by $\tau$ is not equivalent to any CPC transnormal system by Proposition \ref{prop: CPC for toric type}. Assume $\metr$ is the Riemannian metric on $\mathcal{M}_{\tau}$ in Theorem \ref{thm: intro: CPC has standard}, such that $\F$ is induced from an isoparametric function. It is then claimed that $\metr$ can never become the standard metric locally isometric to $\mH^3$, as explained below: 
        If this were the case, the lifting transnormal system on the universal Riemannian cover $\mH^3$ would also be induced by an isoparametric function. This would imply that it is, in fact, a CPC transnormal system, according to the classification of isoparametric functions on $\mH^3$. Consequently, $\F$ itself would also be a CPC transnormal system, leading to a contradiction. 
        \par    
        In summary, this is a significant example in the study of cohomogeneity-one actions, CPC transnormal systems, isoparametric foliations, and transnormal systems. This manifold $\mathcal{M}_{\tau}$ does not admit both a cohomogeneity-one action and a CPC transnormal system for any Riemannian metric. However, it may admit an isoparametric foliation or a hyperbolic metric, but not both simultaneously. 
    \end{ex}
    \begin{rmk}\label{remark: why lift directly to universal is bad}
        In Section \ref{sec: covering and lifting}, we established that a transnormal system on a Riemannian manifold can be lifted to a transnormal system on its universal Riemannian cover. However, this does not imply that our discussion should primarily focus on transnormal systems in simply connected manifolds. On the contrary, in some cases, lifting to the universal cover introduces additional complexity. This is because our classification is carried out under the equivalence, which is not necessarily preserved under coverings. Specifically, suppose $\F$ is lifted to $\hat{\F}$. If $\hat{\F}\uppri$ is equivalent to $\hat{\F}$, then in general, there does not exist another transnormal system equivalent to $\F$ that can be lifted to $\hat{\F}\uppri$. Consequently, reducing connectivity via lifting prevents classification under the equivalence in Definition \ref{def: equivalence of tran sys}, whereas the latter often provides a more effective simplification than the former.
    \end{rmk}

    \newpage
    \bibliographystyle{alpha}
    \bibliography{isoparametric}

\end{document}